\documentclass[10pt,reqno]{amsart}
\usepackage{graphicx,amsmath,amssymb,amsthm,xcolor,comment,mathrsfs}
\usepackage[margin=1in]{geometry}
\usepackage{hyperref}
\usepackage{tikz}
\usetikzlibrary{decorations.markings}
\usepackage[shortlabels]{enumitem}
\usepackage{stackengine} 
\newcommand\oast{\stackMath\mathbin{\stackinset{c}{0ex}{c}{0ex}{\ast}{\bigcirc}}}

\title{The four-dimensional Anderson model: a case study for critical SPDEs}
\author{Yu Deng}
\address{University of Chicago}
\email{yudeng@uchicago.edu}
\author{Hao Shen}
\address{University of Wisconsin-Madison}
\email{pkushenhao@gmail.com}
\date{}

\theoremstyle{definition}
\newtheorem {definition}{Definition}[section]
\newtheorem{prop}[definition]{Proposition}
\newtheorem{thm}[definition]{Theorem}
\newtheorem{lemma}[definition]{Lemma}
\theoremstyle{remark}
\newtheorem{remark}{Remark}[section]
\numberwithin{equation}{section}

\newcommand{\Eb}{\mathbb E}

\newcommand{\Nb}{\mathbb N}

\newcommand{\Pb}{\mathbb P}

\newcommand{\Rb}{\mathbb R}

\newcommand{\Tb}{\mathbb T}

\newcommand{\Zb}{\mathbb Z}
\newcommand{\Ac}{\mathcal A}
\newcommand{\Bc}{\mathcal B}
\newcommand{\Cc}{\mathcal C}

\newcommand{\Ec}{\mathcal E}

\newcommand{\Hc}{\mathcal H}
\newcommand{\Ic}{\mathcal I}
\newcommand {\Jc}{\mathcal J}

\newcommand{\Lc}{\mathcal L}

\newcommand{\Nc}{\mathcal N}

\newcommand{\Pc}{\mathcal P}
\newcommand{\Qc}{\mathcal Q}
\newcommand{\Rc}{\mathcal R}
\newcommand{\Sc}{\mathcal S}
\newcommand{\Tc}{\mathcal T}

\newcommand{\mf}{\mathfrak m}
\newcommand{\ff}{\mathfrak f}
\newcommand{\pf}{\mathfrak p}

\newcommand{\rf}{\mathfrak r}
\newcommand{\yf}{\mathfrak y}

\newcommand{\lf}{\mathfrak l}

\newcommand{\qf}{\mathfrak q}

\newcommand{\Xf}{\mathfrak X}

\newcommand{\nf}{\mathfrak n}

\newcommand{\Rs}{\mathscr R}

\newcommand{\Xs}{\mathscr X}

\newcommand{\vX}{\boldsymbol{X}}
\newcommand{\vY}{\boldsymbol{Y}}
\newcommand{\vP}{\boldsymbol{P}}

\newcommand{\dirac}{\boldsymbol{\delta}}

\newcommand{\eps}{\varepsilon}
\newcommand{\Wick}[1]{\boldsymbol{:\!} #1 \boldsymbol{\!:}}

\tikzset{
    midarrow/.style={
        postaction={ decorate,  decoration={
                markings,
                mark={at position 0.6 with {\arrow{>}}}
            }}},
    dot/.style={circle, fill=black, inner sep=1pt},
    Bdot/.style={rectangle, fill=black, inner sep=1.5pt}
}

\begin{document}

\begin{abstract}
We study the weakly coupled elliptic Anderson model with spatial white noise on the four-dimensional torus,
which provides a basic example of a critical SPDE requiring
renormalization at arbitrarily high orders.  
With coupling $\lambda |\log\eps|^{-\frac12}$ where $\lambda>0$ is sufficiently small,
we prove that the Green's function of the corresponding random Schr\"odinger operator,  suitably centered and rescaled, converges to a centered Gaussian random field with explicit covariance. 

The main difficulty is that, for such critical models, one must expand up to order
$|\log\varepsilon|$, while the perturbative expansion contains factorially many
pairings and a growing number of renormalization terms.  To overcome this, we construct a truncated renormalized parametrix and prove sharp high-order
bounds for its remainder.  A central ingredient is a multiscale analysis based on a new version of Hepp trees, combined with new estimates for summations over permutations.
These estimates reveal a precise balance between logarithmic losses from scale summation and factorial gains from the structure of primitive pairings.  The methods developed here are intended as a first step toward a general  theory for critical SPDEs with weak couplings.
\end{abstract}

\maketitle

\section{Introduction}\label{intro} The purpose of this paper (and follow-up works) is to develop suitable machinery for the study of renormalization and solution theories for \emph{critical} SPDEs in the perturbative regime, i.e. when the (normalized) coupling constant is sufficiently small.

In this paper we start with a \emph{linear} problem, namely the elliptic Anderson model in dimension $4$. We expect our methods to also apply to nonlinear models (such as $\Phi_4^4$ and Yang-Mills etc.), which will be explored in future works.

\subsection{The main result}\label{sec.intro_1}
Let $\xi$ be the space white noise on $\Tb^4=[-\pi,\pi]^4$, also fix a smooth, compactly supported cutoff function $\rho\geq 0$ on $\Tb^4$, which is symmetric in all variables and even in each variable, with $\int_{\Tb^4}\rho=1$. For $\eps\in(0,1)$, define $\xi_\eps:=\rho_\eps*\xi$ where $\rho_\eps=\eps^{-4}\rho(\cdot/\eps)$. Consider the Schr\"{o}dinger operator
\begin{equation}\label{eq.intro_1}\Lc_{\eps}:=1-\Delta-\lambda_\eps\cdot\xi_{\eps}+\Cc_\eps;\qquad \lambda_\eps:= \frac{\lambda}{\sqrt{|\log\eps|}},\end{equation}where $0<\lambda\ll 1$ is a small absolute constant (independent of $\eps$), and $\Cc_\eps$ is a renormalization constant to be fixed later. Define the Green's function 
\begin{equation}\label{eq.anderson}G_\eps(x,y):=(\Lc_\eps^{-1}\dirac_y)(x)\end{equation} where $\dirac_y$ is the Dirac mass at $y$ (we understand $G_\eps=0$ if $\Lc_\eps$ is not invertible). Define also the free Green's function $G(x,y)=G(x-y)=((1-\Delta)^{-1}\dirac_y)(x)$.

\begin{thm}\label{thm.main} Fix $\lambda$ small enough. Then there exists a suitable choice of renormalization constants $\Cc_\eps$ (depending on $\eps$ and $\rho$), such that the sequence of random fields converges in law \[\lambda_\eps^{-1}\cdot (G_\eps(x,y)-G(x,y))\xrightarrow{\mathrm{law}}\Hc(x,y)\qquad \textrm{as }\eps\to 0,\] and the limit $\Hc(x,y)$ is a centered Gaussian random field with covariance function independent of the cutoff function $\rho$, which is
\begin{equation}\label{eq.cov}\Eb(\Hc(x_1,y_1)\Hc(x_2,y_2))=\frac{2\pi^2}{2\pi^2-\lambda^2}\int_{\Tb^4}G(x_1-z)G(y_1-z)G(x_2-z)G(y_2-z)\,\mathrm{d}z.\end{equation}
\end{thm}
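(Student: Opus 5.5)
The plan is to expand $\Lc_\eps^{-1}$ as a renormalized Neumann series around $(1-\Delta)^{-1}$, truncated at order $N\sim K|\log\eps|$. Write $V_\eps:=\lambda_\eps\xi_\eps-\Cc_\eps$. Formally
\[
G_\eps=\sum_{n\ge 0}\bigl(G V_\eps\bigr)^n G .
\]
The first-order term $\lambda_\eps\, G\xi_\eps G$ produces the Gaussian limit after dividing by $\lambda_\eps$, while the higher-order terms must either vanish or renormalize the variance. We fix $\Cc_\eps$ order by order so that the divergent "self-energy" contributions cancel. In dimension four the basic divergence is
\[
\lambda_\eps^2\int G(z)^2\rho_\eps^{*2}(z)\,dz\sim \lambda_\eps^2|\log\eps|=\lambda^2 ,
\]
which is of order one after accounting for the coupling. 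More precisely it is of size $\lambda_\eps^2\eps^{-2}$ from the $G\,\delta\, G$ tadpole, which is the mass term. Nested divergences occur at every order, so $\Cc_\eps=\sum_{k\le N}\lambda_\eps^{2k}c_k(\eps)$ is determined recursively by requiring that the one-particle-irreducible insertions vanish at zero momentum.

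We then construct a truncated renormalized parametrix $P_N$. It consists of the Wick-ordered chains (products of $\xi_\eps$ with the contractions already subtracted by $\Cc_\eps$) up to order $N$, and satisfies $\Lc_\eps P_N=\dirac+R_N$. The goal is the high-moment bound
\[
\Eb\|R_N\|^{2p}\lesssim (C\lambda)^{2pN},
\]
so that $(C\lambda)^N\le\eps^{K\log(1/C\lambda)}$ beats any polynomial loss in $\eps$. Then $G_\eps=P_N(1+R_N)^{-1}$ by a Neumann series on an event of probability tending to one, and the remainder is negligible after dividing by $\lambda_\eps$. The bound on $R_N$ and on each chain is obtained by expanding the moments via Wick/Isserlis into pairings, and bounding each Feynman graph by a multiscale (Hepp-tree) decomposition of every propagator into dyadic scales $2^{-j}\ge\eps$. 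Each scale sum costs at most a factor $|\log\eps|$ per vertex pair, which is exactly compensated by $\lambda_\eps^2=\lambda^2/|\log\eps|$.

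The main obstacle is combinatorial. There are $(2n-1)!!$ pairings, and logarithms are lost in scale summation, so crude bounds give $n!\,(C\lambda)^n|\log\eps|^{?}$, which is useless for $n\sim|\log\eps|$. My approach is to show that after renormalization only "primitive" (non-nested, crossing) pairings survive. For these, the Hepp-tree constraint on the ordering of scales forces the logarithmic factors to come with $1/k!$ from ordered scale integrals: summing over permutations, the scales must be monotone along the tree. I would first prove a permutation-sum lemma of the form
\[
\sum_{\sigma}\prod(\text{log-weights})\le C^n\,\frac{|\log\eps|^{n}}{n!}\cdot n! ,
\]
balancing the factorial count exactly, and then handle the renormalization terms via a forest-type formula to avoid overlapping subtractions.

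Finally, we identify the limit. Dividing $P_N-G$ by $\lambda_\eps$, the non-Gaussian chains vanish in $L^2$ by the same bounds (their cumulants of order $\ge3$ are suppressed by extra powers of $\lambda_\eps$). The Gaussian part consists of the "ladder" chains $G\,\xi_\eps\,(\text{renormalized bubbles})\,G$. Its covariance is the sum over $k$ of $k$ iterated logarithmic bubbles, each contributing $\lambda_\eps^2\cdot\frac{|\log\eps|}{2\pi^2}$, because $\int_{\eps<|z|<1}|z|^{-4}\,dz=2\pi^2|\log\eps|$ while $G(z)\sim(4\pi^2|z|^2)^{-1}$, with normalizations as in the statement. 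This gives the geometric series
\[
\sum_k\Bigl(\frac{\lambda^2}{2\pi^2}\Bigr)^k=\frac{2\pi^2}{2\pi^2-\lambda^2},
\]
multiplied by $\int G(x_1-z)G(y_1-z)G(x_2-z)G(y_2-z)\,dz$, since the outer legs localize at a single point $z$. The result is independent of $\rho$. Convergence in law follows by showing that all higher cumulants vanish, using the same graph bounds, together with tightness from the moment estimates.
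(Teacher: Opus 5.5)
Your architecture matches the paper's: a renormalized parametrix truncated at order $\sim|\log\eps|$, Neumann-series inversion on a high-probability event, Hepp-tree bounds reduced to primitive pairings, and identification of the limit from low-order terms. The gap is in the step that carries the whole difficulty, the permutation-sum bound. Your lemma is essentially the target bound restated. The justification you give, that tree-ordered scales produce $|\log\eps|^k/k!$, fails. The constraint $N_{\nf^+}>N_\nf$ yields a $1/k!$ only when the branching nodes form a chain. For bushy trees it saves merely an exponential factor, and it gives nothing when there is a single scale.

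That last case is exactly the dangerous one. Let the $n$ distinct points fill a four-dimensional ball of radius $\sim n^{1/4}N$ with spacing $N$. The Hepp tree is then a root with $n$ leaf children, so there is one scale and one logarithm. Yet there are $\sim(2n)!/2^n$ orderings, which is comparable to $(n!)^2$ up to $C^n$. Bound each edge by $N^{-2}$ and use the volume $C^nN^{4(n-2)}$ (two points fixed). The sum in (\ref{eq.reduce_4}) comes out of size roughly $C^n(n!)^2\langle y_1-y_{2n}\rangle^{-4}$, whereas (\ref{eq.proof_1}) needs $C^n|\log\eps|^{n-2}\langle y_1-y_{2n}\rangle^{-4}$. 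For $n\sim|\log\eps|$ you are off by a factor $n!\approx C^n|\log\eps|^n$. This excess is not logarithmic, so no scale-ordering argument can remove it.

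The missing input is geometric. An $N$-separated set $A\subset\Zb^4$ satisfies $\sup_x\sum_{y\in A,\,|x-y|\gtrsim N}|x-y|^{-2}\lesssim|A|^{1/2}N^{-2}$, because the densest packing is a ball of radius $|A|^{1/4}N$. Iterating this along the path gives the square-root-of-factorial gain of Proposition \ref{prop.main_4}; this also needs the bilinear refinement of Lemma \ref{lem.aux3} and multinomial control of repeated leaves.

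On top of this, the paper adds two ingredients:
\begin{itemize}
\item It classifies orderings by the set $W$ of non-root branching nodes whose subtrees occupy a single block. It then proves the trade-off $(n-|W|)!$ against $|\log\eps|^{|W|+1}$ (Proposition \ref{prop.main_3}), by collapsing lowest well-separated subtrees into compound leaves.
\item It proves a volume bound sharp up to $C^n$ (Proposition \ref{prop.hepp_3}). This uses $|\mathrm{Aut}(\Tc)|$ and strictly increasing dyadic marks, so that the $O(n)$ constants in $|z_\lf-z_{\lf'}|\sim N_{\lf\vee\lf'}$ do not cost $n^{O(n)}$.
\end{itemize}
The conclusion is the balance $(n-s)!\,|\log\eps|^{\min(s+1,n-2)}\le C^n|\log\eps|^{n-2}$.

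Two smaller inaccuracies. First, renormalization does not eliminate non-primitive pairings. The renormalized nested blocks survive and must be absorbed into effective kernels $|H(z)|\lesssim|z|^{-2}$, using evenness to kill the linear Taylor term. This is why each primitive block must save two logarithms, not just break even against $\lambda_\eps^{2n}$. Second, with your own normalization a single bubble gives $\lambda^2/(8\pi^2)$. The ratio $\lambda^2/(2\pi^2)$ appears only after summing the contributing diagrams over $m_1+m_2=m$.
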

\begin{remark} The $\Cc_\eps$ defined in \eqref{eq.setup_11} is a sum of a {\it diverging number} of renormalization constants.
As far as we know, Theorem \ref{thm.main} is the first convergence result for a critical SPDE where \emph{infinitely many} renormalization terms are needed (see discussions in Section \ref{sec.intro_2}). Previously, there has been a term-by-term convergence result, proved recently in \cite{GR26}. Namely, \cite{GR26} shows that the expansion term of order $n$ converges to a Gaussian field for each fixed $n$ \emph{independent} of $\eps$, and that summing up these limits in $n$ leads to the covariance (\ref{eq.cov}) for $\lambda<\sqrt 2 \pi$.
\end{remark}
\begin{remark} In principle, the parabolic Anderson model
\[\partial_tu=\Delta u+\lambda_\eps\xi_\eps\cdot u-\Cc_\eps'\cdot u\] (where $\Cc_\eps'$ is some renormalization constant possibly different from the $\Cc_\eps$ in Theorem \ref{thm.main}) can be studied using similar arguments as in the proof of Theorem \ref{thm.main}, without additional technical difficulties. We plan to pursue this in a future work.
\end{remark}
\subsection{Critical SPDEs}\label{sec.intro_2} The field of stochastic partial differential equations (SPDEs) has seen tremendous success in the last 15 years. With the development of regularity structure theory \cite{H14}, paracontrolled calculus \cite{GIP15}, and renormalization group approaches \cite{Kup16,D22}, the (local-in-time) solution theory for general \emph{subcritical} SPDEs is now well understood.  (c.f. \cite{HairerKPZ,CatellierChouk,BCCH,HairerGeometric,CCHS_2D,CCHS_3D,HairerQuasi,Otto2024priori,GubiQuasi}.)

The next obvious target is then the study of \emph{critical} equations. Many physically important SPDEs belong to this class, for example the Anderson, $\Phi^4$ and Yang-Mills models in dimension $4$, the Kardar–Parisi–Zhang (KPZ) and stochastic heat equations in dimension $2$, etc. Compared to subcritical ones, they have the following distinctive features, which are also the main sources of difficulties:
\begin{enumerate}[{(1)}]
\item They are scaling invariant, i.e. small and large scales are expected to have the same behavior;
\item High order iterations are expected to be no better (e.g. in terms of regularity) than lower order ones;
\item As such, one usually needs to perform an \emph{infinite order} expansion in constructing solutions;
\item Generally, there will also be \emph{infinitely many} intrinsic renormalization terms;
\item Sometimes there is a critical value of parameter, at which the system exhibits a \emph{phase transition} between Gaussian and non-Gaussian limits.
\end{enumerate}

A typical example, in fact the \emph{major} example of critical SPDE where rigorous results have been proved, is the stochastic heat equation (SHE) in dimension $2$. Consider, for example, the It\^o solution to the following \emph{spatially mollified} SHE:
\begin{equation}\label{eq.she}
    \partial_t u_\eps(t,x)
    =
    \frac12 \Delta u_\eps(t,x)
    +
    \beta_\eps u_\eps(t,x)\xi_\eps(t,x),
    \qquad
    u_\eps(0,x)\equiv 1\ (x\in\mathbb{R}^2).
\end{equation}
Here $\beta_\eps:=\beta\cdot|\log\eps|^{-1/2}$, and $\xi_\eps(t):=\rho_\eps *\xi(t)$ is space mollification of $\xi$, where $\xi$ is space-time white noise and $\rho_\eps(x)=\eps^{-2}\rho(x/\eps)$ with $\rho$ a smooth cutoff.

It turns out that, (\ref{eq.she}) exhibits a phase transition from Gaussian to non-Gaussian, at the \emph{critical value} $\beta=\sqrt{2\pi}$. In fact (see \cite[Theorem~5.2]{CSZ2024review}),  if $\beta<\sqrt{2\pi}$, then for every fixed $t>0$, the sequence of random fields converges in law,
\begin{equation}\label{eq.she_limit}
    \frac{1}{\beta_\eps}\bigl(u_\eps(t,x)-1\bigr)
    \xrightarrow{\mathrm{law}}
     v(t,x)\qquad \textrm{as }\eps\to 0,
\end{equation}
and the limit $v(t,x)$ is a centered Gaussian random field with covariance function
\[
    \Eb(v(t,x_1)v(t,x_2))=\frac{2\pi}{2\pi-\beta^2}\int_0^t\int P_{s}(x_1-y)P_s(x_2-y)\,\mathrm{d}y\mathrm{d}s,
\] where $(P_t)_{t\ge 0}$ is the heat semigroup of $\frac12\Delta$. Equivalently, $v(t,x)$ can be written as the solution to the linear (Edwards--Wilkinson) SPDE\[\partial_t v=
    \frac12 \Delta v+ \bigg(\frac{2\pi}{2\pi-\beta^2}\bigg)^{1/2}\cdot\xi, \qquad v(0,x)\equiv 0.\] See \cite[Theorem 2.17]{CSZ17b} for the proof (in fact, \cite{CSZ17b} also discusses the solution as a space-time field, instead of as a  spatial field at a given $t$). 
Similar Gaussian fluctuation results are also obtained for 2D KPZ 
\cite{CD20,CSZ20,G20} via Cole-Hopf along with other techniques.
Some of these results are also extened to the nonlinear case where $u_\eps \xi_\eps$ in the equation is replaced by $\sigma(u_\eps) \xi_\eps$ \cite{Tao24,DG24}. We remark that pointwise statistics of the model is also of great interest; we refer the review articles \cite{CSZ2024review,CSZ2025ICM} for further literature.

    Note that, the result \eqref{eq.she_limit} is analog of Theorem \ref{thm.main} (but for full range of $\beta$) where the scaling limit is Gaussian. However, things are completely different when $\beta=\sqrt{2\pi}$: the $u_\eps$ itself converges to 
    a highly nontrivial (non-Gaussian) limit called the stochastic heat flow (SHF), as shown in the  breakthrough work \cite{CSZ23a,T24} (the former proves the result in terms of directed polymer measures, whereas the latter proves the result for  SHE as in \eqref{eq.she} using axiomatic characterization). In fact, arbitrary finite moments of the SHF have been studied, and they exhibit a double exponential growth, in contrast with Gaussians (\cite{CSZ19b,GQT21,CSZ23b,GN25}).

    We also mention a related line of work \cite{CES21,CannizzaroDuke,CannizzaroBurgers} on stationary solutions to critical SPDEs which exploits explicit Gaussian invariant measures and the associated Wiener chaos analysis of their generators. For anisotropic KPZ and stochastic Burgers equation, with coupling strength $\frac{\lambda}{\sqrt{|\log\eps|}}$ 
    the corresponding stationary dynamics converge, for {\it every} $\lambda>0$, to a linear stochastic heat equation with nontrivially renormalized coefficients.
    We refer to \cite{CT24} for an overview of these results.  More recently, \cite{CMTsuperdiffusive} treated the 2D stochastic Burgers equation at strong coupling (i.e. without $|\log\eps|^{-\frac12}$),  and proved that, under logarithmically superdiffusive space-time scaling, its solution converges to a linear stochastic heat equation with renormalized coefficients.
    Let us also mention \cite{DubedatShen,Piazza2025} on critical (conformal) SPDEs which have a notion of probabilistically-weak stationary solutions also due to well-understood invariant (symmetrizing) measures.

    \subsubsection{A comparison between two models} We make two remarks regarding the differences between the 4 dimensional Anderson model in Theorem \ref{thm.main} and the 2 dimensional SHE in \cite{CSZ17b,CSZ2024review}.
    
    (1) For SHE (\ref{eq.she}), since we do not smooth out the noise in time and are considering the It\^o solution, it follows from the one-sided support of the heat kernel that, if we formally expand the solution to (\ref{eq.she}) as a superposition of $n$-th order chaos, then
    \begin{enumerate}[{(a)}]
    \item Any self-pairing term (i.e. lower order chaos) in the expansion is exactly $0$;
    \item If we take correlation between two $n$-th order chaos, then there is \emph{only one} pairing between the different noises that gives nonzero contribution. 
    \end{enumerate}
    This means that (\ref{eq.she}) does not require any renormalization terms, and the correlations between $n$-th order chaos grow \emph{exponentially} in $n$, without any potential factorial loss. This then allows one to expand to infinite order for each fixed $\eps$, without encountering factorial divergence, at least when $\beta<\sqrt{2\pi}$.

    Of course, the above is the very special feature for (\ref{eq.she}) due to the one-sided support of heat kernels, which is not true for the Anderson model (\ref{eq.anderson}). In fact, for (\ref{eq.anderson}) there will be infinitely many renormalization terms (or strictly speaking $|\log\eps|\to\infty$ terms). Theorem \ref{thm.main} seems to be the first result ever that rigorously deals with such a scenario.
    
    Moreover, if we expand the solution to (\ref{eq.anderson}) up to $n$-th order chaos, then we inevitably encounter $n!$ type divergence when $n\to\infty$ for fixed $\eps$. As such, we need to perform \emph{finite order} expansions at order $n$ with $n\sim|\log\eps|\to\infty$ in the limit, and obtain very precise estimates for these very high order terms which involve very complicated combinatorial structures. The main novelty of this paper is to develop a whole set of machinery (namely Propositions \ref{prop.hepp_3}, \ref{prop.main_3}, \ref{prop.main2_2} and \ref{prop.main_4}) to estimate such combinatorial structures. We believe such machinery, with suitable generalizations, can be applied to more general linear and nonlinear critical SPDEs (such as $\Phi_4^4$ or Yang-Mills), for which the factorial divergence also needs to be addressed. See Section \ref{sec.idea} for more details.

    (2) For SHE (\ref{eq.she}), the simple structure of correlations due to property (b) above allows to explicitly calculate the contributions of each order $n$. In fact, for $\beta<\sqrt{2\pi}$, these contributions decay exponentially like $(\hat{\beta})^n$ with $\hat{\beta}:=\beta/\sqrt{2\pi}$, so in the end only order $O(1)$ terms contribute; however, for $\beta=\sqrt{2\pi}$, all terms up to order $\sim|\log\eps|$ contribute uniformly, and the contribution of these very high order terms ($\sim|\log\eps|$) is crucial to the non-Gaussianity of the limit.

    For the Anderson model (\ref{eq.anderson}), due to the high complexity of combinatorial structures, it is not clear what is the transition threshold between Gaussian and non-Gaussian (or whether such a threshold exists at all), though a natural guess in view of (\ref{eq.cov}) is $\lambda=\sqrt{2}\pi$. It would be preferable to prove Theorem \ref{thm.main} for all $\lambda<\sqrt{2}\pi$ (instead of sufficiently small $\lambda$), but this seems to require much more precise estimates than in this paper, which is not available at this point.
\subsection{Ideas of the proof}\label{sec.idea} Here we briefly discuss the ideas of the proof of Theorem \ref{thm.main}. We expect that the observations made here should provide general guidelines to solving critical SPDEs. The case of nonlinear equations will involve much more sophisticated combinatorics, which we plan to address in future works.
\subsubsection{Parametrix construction} Start with the formal expansion
\begin{equation}\label{eq.idea_1}(1-\Delta-\lambda_\eps\xi_\eps)^{-1}\sim(1-\Delta)^{-1}+\sum_{n=1}^\infty\big((1-\Delta)^{-1}\circ(\lambda_\eps\xi_\eps)\big)^n\circ(1-\Delta)^{-1}.
\end{equation} The goal is to subtract suitable renormalization terms from (\ref{eq.idea_1}) (and match them by choosing $\Cc_\eps$ in (\ref{eq.intro_1})), and control the rest of the expression (in terms of suitable moments) for each $n$.

To illustrate the ideas, for simplicity
let us ignore the renormalizations as well as all self-pairing terms (i.e. lower order chaos) for now in (\ref{eq.idea_1}). If we take the second moment of the resulting no-self-pairing term, then by Wick's formula we get the expression
\begin{equation}\label{eq.idea_2}\lambda_\eps^{2n}\sum_{\pi\in S_n}\int_{(\Tb^4)^{2n}}\prod_{j=0}^{n}G(x_j-x_{j+1})G(y_j-y_{j+1})\cdot\prod_{j=1}^n\eta_\eps(y_j-x_{\pi(j)})\,\prod_{j=1}^n\mathrm{d}x_j\mathrm{d}y_j,
\end{equation} where $\eta=\rho *\rho$ is another smooth cutoff. The key point of (\ref{eq.idea_2}) is the summation over all permutations $\pi\in S_n$; unlike the case \eqref{eq.she}, all such permutations will have nonzero contributions, which leads to an $n!$ loss when $n$ is large. Because of this, there is no hope to control the terms in (\ref{eq.idea_1}) for \emph{arbitrarily large} $n$ relative to $\eps$, and the series (\ref{eq.idea_1}) written as is will not converge. 

The idea, which goes back to \cite{DH21}, is to construct a \emph{parametrix} of $1-\Delta-\lambda_\eps\xi_\eps$, which will be a \emph{truncated} version of the series (\ref{eq.idea_1}). Say we truncate (\ref{eq.idea_1}) at level $n\leq A$, and define the resulting parametrix by $\Pc$, then we have
\begin{equation}\label{eq.idea_3}(1-\Delta-\lambda_\eps\xi_\eps)\circ\Pc-1=\Rc:=-(\lambda_\eps\xi_\eps)\circ \big((1-\Delta)^{-1}\circ(\lambda_\eps\xi_\eps)\big)^A\circ(1-\Delta)^{-1},
\end{equation} and the same for $\Rc'=\Pc\circ (1-\Delta-\lambda_\eps\xi_\eps)-1$. If we can bound $\|\Rc\|_{\mathrm{op}},\|\Rc'\|_{\mathrm{op}}<1/2$ in a suitable operator norm, then we can invert $1+\Rc$ and $1+\Rc'$ by von Neumann series, which then allows us to construct $(1-\Delta-\lambda_\eps\xi_\eps)^{-1}$. See Section \ref{sec.setup_6} for details.

In order to bound $\Rc$ (and similarly $\Rc'$), note that the factor $\lambda_\eps\xi_\eps$ yields a loss of $\eps^{-2}$ in terms of operator norm (as $\|\xi_\eps\|_{L^\infty}\sim\eps^{-2}$ with high probability). As for the remaining factors $\big((1-\Delta)^{-1}\circ(\lambda_\eps\xi_\eps)\big)^A\circ(1-\Delta)^{-1}$, the best thing we can get is something like
\[\|\big((1-\Delta)^{-1}\circ(\lambda_\eps\xi_\eps)\big)^A\circ(1-\Delta)^{-1}\|_{\mathrm{op}}\leq (C\lambda)^A\] even without accounting for the potential factorial loss, due to the criticality of the system. Therefore, a basic necessary condition for the above parametrix to work is that 
\[(C\lambda)^A\cdot\eps^{-2}\ll 1\Rightarrow A\gtrsim|\log\eps|.\]

On the other hand, consider the integral expression (\ref{eq.idea_2}). To see the picture, let us just integrate in a small domain where all $x_j$ and $y_j$ belong to the same ball of radius $\eps$. This leads to the integral which has size (note that the Green's function $|G(z)|\sim |z|^{-2}$ for small $|z|$)
\[\lambda_\eps^{2n}\cdot n!\cdot (\eps^4)^{2n}\cdot \eps^{-4(n+1)}\cdot \eps^{-4n}\sim \lambda^{2n}\cdot n!\cdot |\log\eps|^{-n}\cdot\eps^{-4}.\] As $\lambda$ is a constant independent of $\eps$, if $n\gg\lambda^{-2}|\log\eps|$, then the factorial loss $n!$ will become dominant, and the whole expression will diverge factorially in $n$; conversely, if $n\lesssim|\log\eps|$ (say with constants independent of $\lambda$), then this contribution will have size $(C\lambda)^{2n}\cdot\eps^{-4}$, which is acceptable. This leads to our first observation, namely

\textbf{Observation 1:} in order to control the solutions to critical SPDEs, we need to perform finite order (truncated) expansions up to order $A\sim|\log\eps|$, where $\eps$ is the ultraviolet scale.

See also \cite{GabrielAC} for similar phenomena in the context of parabolic PDE with critical random initial data.
We remark that this observation is also consistent with the situation of SHE \cite{CSZ17b,CSZ23a}, where terms of order $|\log\eps|$ emerge at the threshold value $\beta=\sqrt{2\pi}$.
\subsubsection{Hepp trees} Now, still ignoring the renormalizations, the goal is to control the terms in (\ref{eq.idea_1}) of order $n\lesssim|\log\eps|$, see (\ref{eq.setup_20}) in Proposition \ref{prop.main_1} (we also need Proposition \ref{prop.main_2}, but this is a term-by-term estimate and follows from \cite{GR26}). This leads to expressions of the form (\ref{eq.idea_2}); for simplicity, we will neglect all scales smaller than $\eps$, and replace $\eta_\eps$ factors by Dirac mass, which reduces (\ref{eq.idea_2}) to an integral of the form
\begin{equation}\label{eq.idea_4}
\Ic\sim\lambda_\eps^{2n}\sum_{\pi\in S_n}\int_{(\Tb^4)^{n}}\prod_{j=0}^{n}(\eps^2+|x_j-x_{j+1}|^2)^{-1}\cdot(\eps^2+|x_{\pi(j)}-x_{\pi(j+1)}|^2)^{-1}\,\prod_{j=1}^n\mathrm{d}x_j.
\end{equation} The main task now is to prove the boundedness of $\Ic$ for $n\lesssim|\log\eps|$; see Proposition \ref{prop.primi_1} for the precise statement of this estimate.

An important tool to deal with such integrals is the \emph{Hepp tree}, see \cite{HairerQuastel,ChandraHairer,HairerBPHZ,GR26}, which serves to distinguish the different scales among the distances between the points $x_j$. Basically, it is constructed by choosing two points among $\{x_j\}$ with smallest distance, then view them as a single point, and find the next smallest distance among the resulting points, and so on. This naturally results in the Hepp tree $\Tc$, which is a binary tree with a dyadic scale $N_\nf$ assigned to each branching node $\nf$, such that the original numbers $x_j$ are represented by the leaves of $\Tc$, and
\begin{equation}\label{eq.idea_5}|x_i-x_j|\sim N_\nf\end{equation} where $\nf$ is the lowest common ancestor of the leaves representing $x_i$ and $x_j$. See Definitions \ref{def.hepp}, \ref{def.hepp_3} and Lemma \ref{lem.hepp} for a version of this adapted to our setting.

In the term-by-term case (where $n$ is fixed independent of $\eps$), once this Hepp tree is fixed, \emph{and the permutation $\pi\in S_n$ is also fixed}, the integrand in (\ref{eq.idea_4}) will be comparable to a fixed power combination of the $N_\nf$ due to (\ref{eq.idea_5}); on the other hand, the volume of the set of points $(x_1,\cdots,x_n)$ described by the given Hepp tree is also bounded by a power combination of $N_\nf$, which allows one to estimate (\ref{eq.idea_4}) by first integrating and then summing in the dyadic $(N_\nf)$. We then only need to count the number of logarithmic losses coming from summing over $(N_\nf)$, which is determined by the combinatorial character of the Hepp tree $\Tc$ together with the permutation $\pi$. Roughly speaking, this is the approach taken in subcritical SPDEs (which have no logarithmic losses) and the term-by-term result \cite{GR26}.

However, in the current setting where $n$ can be as large as $|\log\eps|$, there is no room to fix $\pi$ first and lose dangerous $n!$ factors. Instead, we shall rewrite (\ref{eq.idea_4}) as
\begin{equation}\label{eq.idea_6}
\Ic\sim \lambda^{2n}|\log\eps|^{-n}
\frac{1}{n!}\int_{(\Tb^4)^n}
\bigg(\sum_{\pi\in S_n}\prod_{j=0}^n(\eps^2+|x_{\pi(j)}-x_{\pi(j+1)}|^2)^{-1}\bigg)^2\,\prod_{j=1}^n\mathrm{d}x_j.
\end{equation} Then we control this quantity by two ingredients: (i) the \textbf{volume estimate}, which bounds the volume of the set of points $(x_1,\cdots,x_n)$ such that \emph{a permutation of} $(x_j)$ is described by the given Hepp tree $\Tc$; (ii) the \textbf{integrand estimate}, which bounds the summation in $\pi$ in (\ref{eq.idea_6}) for a fixed point $(x_1,\cdots,x_n)$ described by the given Hepp tree.

The first ingredient, i.e. the volume estimate, is provided by Proposition \ref{prop.hepp_3}. Note that the right hand sides of (\ref{eq.vol_2})--(\ref{eq.vol_2.1}) has only exponential factors $C^n$: this is the only loss allowed, and anything worse, e.g. $(\log\log n)^n$, would lead to divergence. To achieve this, we need to make very sharp estimates, exploiting things including the implicit constants in (\ref{eq.idea_5}) which may depend on the size of the Hepp tree $\Tc$, and the size of the automorphism group of $\Tc$ (see the left hand sides of (\ref{eq.vol_2})--\ref{eq.vol_2.1})). See Section \ref{sec.proof_hepp_3} for the proof.
\subsubsection{Summation over permutations}\label{sec.idea_3} It remains to prove the second ingredient, i.e. the integrand estimate, which is stated in Proposition \ref{prop.main_3}. 

Note that we are estimating a summation over $\pi\in S_n$. Start with the easier term-by-term setting: in this case, once $\Tc$ is fixed, all the permutations $\pi$ can be classified according to an ``index" set $W$. This $W$ is a subset of $\Bc\backslash\{\rf\}$ (the set of non-root branching nodes), which is defined as the set of non-root branching nodes of $\Tc$ such that all leaves in the subtree rooted at this node \emph{form a single block} in the permutation $\pi$. The case $W=\Bc\backslash\{\rf\}$ produces the largest contribution in the $\pi$ summation, and every element $\nf\not\in W$ yields one gain factor of form $N_{\nf}/N_{\nf^+}$ where $\nf^+$ is the parent node of $\nf$ in $\Tc$. Upon multiplying by the volume factor in Proposition \ref{prop.hepp_3} and summing over $(N_\nf)$, each such factor (for each element missed by $W$) will save us one logarithmic loss. Schematically that is:
\begin{equation}\label{eq.idea_7}\sum_{(N_\nf)}(\textrm{contribution to $\Ic$ with fixed $W$})\lesssim_n |\log\eps|^{-n}\cdot |\log\eps|^{|W|+1}=|\log\eps|^{-1}\cdot |\log\eps|^{|W^c|}.
\end{equation} This is an extension of the arguments in \cite{GR26}, with the latter only distinguishing the ``contributing" case $W=\Bc\backslash\{\rf\}$ and the ``non-contributing" case $W\subsetneq\Bc\backslash\{\rf\}$. In fact, it is easy to see that $W=\Bc\backslash\{\rf\}$, i.e. \emph{every} subtree forms a single block in $\pi$, if and only if the associated Feynman diagram is contributing in the sense of \cite{GR26} (i.e. formed by successively inserting bubbles to the trivial diagram).

Now, in the case $n\sim|\log\eps|$, the estimate (\ref{eq.idea_7}) is replaced by the precise sharp estimate
\begin{equation}\label{eq.idea_8}\sum_{(N_\nf)}(\textrm{contribution to $\Ic$ with fixed $W$})\leq (C\lambda)^{2n}|\log\eps|^{-n}\cdot (n-|W|)!|\log\eps|^{|W|+1}.
\end{equation} In other words, the logarithmic loss (worse for larger $W$) is \emph{balanced} by the factorial $(n-|W|)!$ (which becomes better for larger $W$). For example, if $W=\Bc\backslash\{\rf\}$, then we have full logarithmic losses, but \emph{the number of such permutations} is bounded exponentially in $n$ (as observed in \cite{GR26}). On the other hand, if $W=\varnothing$, then we have $n!$ such permutations, but \emph{there is no logarithmic loss}, which is still under control as $n\lesssim|\log\eps|$ and $n!\leq n^n\lesssim C^n|\log\eps|^n$. For the precise statement of (\ref{eq.idea_8}), see (\ref{eq.vol_3}) in Proposition \ref{prop.main_3}. We summarize this as 

\textbf{Observation 2:} when $n\sim|\log\eps|$, there is generally a balance between ``logarithmic loss" and ``factorial loss" which corresponds to a classification of permutations $\pi$ (or choice of pairings, or Feynman diagrams).

Now we proceed to prove (\ref{eq.idea_8}). This is done by an inductive argument in $\Tc$; for the precise statement for induction, see Proposition \ref{prop.main2_2}. In the inductive step, we consider a subtree $\Tc_\nf$ rooted at a branching node $\nf$, such that
\begin{enumerate}[{(a)}]
\item The scales $N_\mf$ for all $\mf\in\Tc_\nf$ are all comparable;
\item For the parent $\nf^+$ of $\nf$ we have $N_{\nf^+}\gg N_\nf$.
\end{enumerate} For the precise condition, see (\ref{eq.gap_4}) in Section \ref{sec.induct}. In the language of \cite{ChandraHairer,HairerBPHZ,GR26}, this $\Tc_\nf$ is an ``unsafe" group while any subtree of $\Tc_\nf$ is a ``safe" group. These conditions guarantee that, when considering distances with leaves outside of $\Tc_\nf$, all points of $\Tc_\nf$ can be viewed as the same point. 

As such, summing over a permutation $\pi$ of all leaves of $\Tc$, can be recast as summing over a permutation $\pi_1$ of all leaves of $\Tc_\nf$, a way of breaking $\pi_1$ into blocks of consecutive elements (defined by a ``skipped set" $O$, see Proposition \ref{prop.main_4}), and a permutation $\pi_2$ of the smaller tree $\Tc'$, where $\Tc'$ is formed by merging $\Tc_\nf$ into a new leaf with different copies (each block of consecutive elements in $\pi_1$ counts as a copy). The set $W$ contains $\nf$ if and only if there is only one block in $\pi_1$. For details, see \textbf{Step 2} in the proof of Proposition \ref{prop.main2_2} in Section \ref{sec.induct}.

In this way, the summation over $\pi$ can be factorized into a summation over $(\pi_1,O)$, and a summation over $\pi_2$. The latter follows from the induction hypothesis for $\Tc'$, and the former requires a standalone estimate, which is Proposition \ref{prop.main_4}.

In Proposition \ref{prop.main_4}, the main feature is that all distance scales $N_\mf$ for $\mf\in\Tc_\nf$ are comparable; for simplicity we will assume they are all equal, i.e. there is only one scale (in reality, we still need to be very careful about the relative multiples for these scales, to avoid any super-exponential loss). A toy model for this case is the estimate (say with $O=W=\varnothing$)
\begin{equation}\label{eq.idea_9}
\sum_{\pi\in S_n}\prod_{j=1}^{n-1}|x_{\pi(j)}-x_{\pi(j+1)}|^{-2}\leq C^n (n!)^{1/2}\tau^{-2(n-1)},
\end{equation} provided that the $x_j$ are different points in $\Tb^4$ that are at least distance $\tau$ apart. Note that if we naively bound $|x_{\pi(j)}-x_{\pi(j+1)}|\geq\tau$ in (\ref{eq.idea_9}) and sum over $\pi$, we would get $n!$ instead of $(n!)^{1/2}$ which is unacceptable (upon squaring in (\ref{eq.idea_6})).

On the other hand, the sharp estimate in (\ref{eq.idea_9}) follows by iterating the local inequality:
\begin{equation}\label{eq.idea_10}\sup_x\sum_{y\in A}\mathbf{1}_{|x-y|\geq\tau}|x-y|^{-2}\lesssim |A|^{1/2}\tau^{-2},\end{equation} where $A$ is a finite set of points that are at least $\tau$ apart. This is because we are in dimension $4$, and the most compact way of packing $|A|$ boxes of size $\tau$ is to form a large box of size $|A|^{1/4}\tau$; see (\ref{eq.lem4_2}) in the proof of Lemma \ref{lem.aux3}.

The proof of general case of Proposition \ref{prop.main_4} is a sophisticated version of (\ref{eq.idea_9}), which requires a number of improvements, such as a bilinear/two-variable version of (\ref{eq.idea_10}) (see (\ref{eq.aux8}) in Lemma \ref{lem.aux3}); also the presence of different copies of the same leaf leads to various multinomial coefficient factors, which need to be controlled using Lemmas \ref{lem.aux1}--\ref{lem.aux2}, see \textbf{Step 3} in the proof of Proposition \ref{prop.main_4} in Section \ref{sec.final}.
\subsubsection{Renormalization terms} Finally, we turn to the renormalization terms. Algebraically, these renormalization constants come from subintervals that are fully paired, and can be constructed following the BPHZ theory \cite{HairerBPHZ} as in \cite{GR26} (though for the linear Anderson model, they can be described by much simpler combinatorics, see Section \ref{sec.setup_3}).

Analytically, we need to prove upper bounds for these terms with order up to $|\log\eps|$, see (\ref{eq.setup_19}) in Proposition \ref{prop.main_1}. For this, note that these terms are summations over self-pairings which we denote by $\kappa$; now if we fix the set of different points described by a Hepp tree $\Tc$ (as we did in Section \ref{sec.idea_3} above), then the summation over $\kappa$ can be viewed as a summation over all permutations of these points but with each point having \emph{two} copies. In this way, the estimate of renormalization constants $\Cc_{2q}$ can fit into the same framework of estimates discussed in Section \ref{sec.idea_3}, and follows from similar proofs. We list this as

\textbf{Observation 3:} in general critical setting, the estimate for renormalization constants should follow from arguments that are similar to those in the estimate for main terms. 

\subsection{Comparison with other critical problems} Outside the realm of SPDEs, the analysis of critical equations has also played central roles in recent works in wave turbulence and kinetic theory, including \cite{DH21,DH21_2,DH23,DH23_2,DHM24}. In these cases, the critical equations still satisfy (2)--(3) in Section \ref{sec.intro_2} above, but the setting of the problem usually specifies \emph{one single scale}, in contrast with the SPDE setting where multiple different scales contribute uniformly.

These problems have many features in common with the current work: one needs to expand up to an order that is logarithmic in the scale (like \textbf{Observation 1}), and there is also a balance between integration/counting bounds and combinatorial/factorial bounds (like \textbf{Observation 2}), though no renormalization is required in dispersive settings. However, the classification of Feynman diagrams leading to \textbf{Observation 2} is different in the sense that one can define an index $\rho$ (analog of $|W^c|$ in Section \ref{sec.idea_3}) such that each index gains a \emph{power} of the scale as opposed to a \emph{logarithm} in this work. Of course this provides more flexibility in the choice of parameters and in some sense makes things easier, though the analytic and combinatorial estimates in \cite{DH21,DH23_2,DHM24} also have their own unique difficulties that are not present here.

It would be interesting to compare the similarities and differences of these problems (for example, there are strikingly similar diagrams and reduction procedures in \cite{DH23} and \cite{GR26} coming from seemingly unrelated places), which we plan to do in future works.

\subsection*{Concerning AI usage} We have used ChatGPT 5.4 Pro at early stages of this project to provide some ideas and inspirations, including e.g. a proof of a toy model version of Proposition \ref{prop.main_4} (which we decide not to include in this paper). The \textbf{Step 3} and \textbf{Step 4} (b) in the proof of Proposition \ref{prop.hepp_3} in Section \ref{sec.proof_hepp_3} are completed with the help of ChatGPT 5.5 Pro. All the other proofs in this paper are composed and written exclusively by the authors.

\subsection*{Acknowledgement} H.S. gratefully acknowledges support from NSF  CAREER DMS-2044415, and some helpful discussion with Simon Gabriel on \cite{GR26}. Y.D. is supported by NSF DMS-2531437. We also thank the SLMath program ``Recent Trends in Stochastic Partial Differential Equations'' in Fall 2025 where this work began.

The proof of Theorem \ref{thm.main} has been autoformalized in Lean 4 (assuming only Proposition \ref{prop.main_2}, which is the main result proved in \cite{GR26}), with the help from Yongzheng Jia and Peter Liang from UC Berkely and SAIR Foundation (\url{https://sair.foundation/}). See the Github page (\url{https://github.com/sairmath/anderson4d_lean}) for details.

\section{Notations and preliminaries}\label{sec.setup} 
We start by setting up some notations and preliminary results.
\subsection{Relevant notations} In the proof below, we understand that intervals $[a,b]$ consist of integers. We also identify a linear operator $K$ with its kernel $K(x,y)$ such that
\begin{equation}\label{eq.setup_0}K(x,y)=(K\dirac_y)(x),\qquad Kf(x)=\int_{\Tb^4}K(x,y)f(y)\,\mathrm{d}y.\end{equation} We shall use $C$ to denote (large) absolute constants that may vary from line to line. At one place we will need to fix one specific such constant, which we will refer to as $C_0$.

\begin{definition}\label{def.func} Define $\Ec$ to be the class of functions $f$ on $\Tb^4$ (or generally $(\Tb^4)^n$) that is symmetric in all variables and even in each variable; i.e.
\begin{equation}\label{eq.func} f(\cdots,x^i,\cdots,x^j,\cdots)=f(\cdots,x^j,\cdots,x^i,\cdots);\qquad f(\cdots,x^i,\cdots)=f(\cdots,-x^i,\cdots).
\end{equation}
\end{definition}
\begin{definition}\label{def.pairing} Given any finite index set $A$, define a \emph{partial pairing} $\kappa$ to be a partition of a subset $P\subseteq A$ into disjoint two-element subsets $\{i,j\}$ (called \emph{pairs}); denote by $S=A\backslash P$ to be the set of unpaired, or \emph{single}, indices. If $S=\varnothing$ (in particular $|A|$ is even), we say $\kappa$ is a \emph{full pairing}.
\end{definition}
\begin{definition}\label{def.interval}Let $A$ be a finite index set, with some linear order $<$. We will call it an \emph{interval} (such as $[1,m]=\{1,\cdots,m\}$). A \emph{subinterval} of $A$ is a subset of form $[a,b]=\{x\in A:a\leq x\leq b\}$. Given a partial pairing $\kappa$ (Definition \ref{def.pairing}), we say a subinterval $B$ is \emph{fully paired} if $\kappa$ induces a full pairing on $B$. Define $\kappa$ to be \emph{primitive} if no \emph{proper} subinterval is fully paired.
\end{definition}
\subsection{The smoothed out noise} Recall the smoothed out noise $\xi_\eps=\rho_\eps*\xi$ with $\rho\in\Ec$ (Definition \ref{def.func}). Define $\eta:=\rho*\rho\geq 0$ which is also smooth and compactly supported with $\eta\in\Ec$ and $\int\eta=1$, then by Wick's theorem we have
\begin{equation}\label{eq.setup_1}
\Eb\bigg(\prod_{j=1}^{m}\xi_\eps(x_j)\bigg)=\sum_{\kappa}\prod_{\{i,j\}\in\kappa}\eta_\eps(x_i-x_j),
\end{equation} where $\eta_\eps=\eps^{-4}\eta(\cdot/\eps)$ as above, and $\kappa$ runs over all full pairings of $[1,m]$. More generally, we have
\begin{equation}\label{eq.setup_2}\prod_{j=1}^{m}\xi_\eps(x_j)=\sum_{\kappa}\prod_{\{i,j\}\in\kappa}\eta_\eps(x_i-x_j) 
\Wick{\prod_{i\in S}\xi_\eps(x_i)}
\end{equation} 
where $\kappa$ runs over all partial pairings of $[1,m]$ and $S$ is the set of single indices (Definition \ref{def.pairing}), 
and $\Wick{-}$ denotes the Wick product, i.e.  projection to the highest Wiener chaos.

\section{Construction of parametrix}\label{sec.setup_new}
\subsection{Renormalized terms}\label{sec.setup_2}
Now consider the formal expansion of the kernel of the unrenormalized operator $(1-\Delta-\lambda_\eps\cdot\xi_\eps)^{-1}$ . For $m\geq 0$, the $m$-th order term (containing $m$ copies of the noise) is given by
\begin{equation}\label{eq.setup_3}
\Ic_m(x,y):=\lambda_\eps^m\int_{(\Tb^4)^m}\prod_{j=0}^m G(x_j-x_{j+1})\prod_{j=1}^m\xi_\eps(x_j)\,\mathrm{d}x_1\cdots\mathrm{d}x_m,
\end{equation} where $x_0:=x$ and $x_{m+1}:=y$. Applying (\ref{eq.setup_2}), we get
\begin{equation}\label{eq.setup_4}\Ic_m=\sum_{\kappa}\Ic_{m,\kappa};\qquad \Ic_{m,\kappa}:=\lambda_\eps^m\int_{(\Tb^4)^m}\prod_{j=0}^m G(x_j-x_{j+1})\prod_{\{i,j\}\in\kappa}\eta_\eps(x_i-x_j)
\Wick{\prod_{i\in S}\xi_\eps(x_i)}\,\mathrm{d}x_1\cdots\mathrm{d}x_m,
\end{equation} where $\kappa$ runs over all partial pairings of $[1,m]$ and $S$ is the set of single indices. 

For later use, we shall define the expression $\Ic_{m,\kappa}[G_0,\cdots,G_m]$ for given functions $G_j\in\Ec$. This is the same as in (\ref{eq.setup_4}), but with the factor $G(x_j-x_{j+1})$ in (\ref{eq.setup_4}) replaced by $G_j(x_j-x_{j+1})$ for $0\leq j\leq m$. In particular, the $\Ic_{m,\kappa}$ in (\ref{eq.setup_4}) is equal to $\Ic_{m,\kappa}[G,\cdots,G]$. Finally, in the above formulas, the index set $[1,m]$ may be replaced by any linearly ordered finite set.

\begin{definition}\label{def.renormterm} Given $m$ and $\kappa$ and  functions $(G_0,\cdots,G_m)$ with $G_j\in\Ec$, we define the renormalization $\Rc\Ic_{m,\kappa}$ of $\Ic_{m,\kappa}$, inductively, as follows:
\begin{enumerate}
\item If there is no fully paired subinterval of $[1,m]$ (and $[1,m]$ itself is not fully paired), then $\Rc\Ic_{m,\kappa}=\Ic_{m,\kappa}$;
\item Otherwise, choose the \emph{smallest (with respect to inclusion) and leftmost} fully paired subinterval $[p+1,p+2q]$ (this could be $[1,m]$). Then:
\begin{enumerate}
\item The indices in $[1,p]\cup[p+2q+1,m]$ are linearly ordered, with a natural partial pairing $\kappa'$, which is inherited from $\kappa$.
\item For $z,w$, denote $x_{p+1}:=z$ and $x_{p+2q}:=w$, and define
\begin{equation}\label{eq.setup_5}J(z-w):=\lambda_\eps^{2q}\int_{(\Tb^4)^{2q-2}}\prod_{j=p+1}^{p+2q-1} G_j(x_j-x_{j+1})\prod_{\{i,j\}\in\kappa}\eta_\eps(x_i-x_j)\,\mathrm{d}x_{p+2}\cdots\mathrm{d}x_{p+2q-1}.
\end{equation}
\end{enumerate}
\item Then, define (inductively)
\begin{align}\label{eq.setup_6}
\Rc\Ic_{m,\kappa}[G_0,\cdots,G_m]&:=\Rc\Ic_{m-2q,\kappa'}[G_0,\cdots,G_{p-1},\widetilde{G},G_{p+2q+1},\cdots G_m],\\
\label{eq.setup_7}
\widetilde{G}(x-y)&:=\int_{(\Tb^4)^2}G_p(x-z)\bigg(J(z-w)-\int_{\Tb^4}J \cdot \dirac(z-w)\bigg)G_{p+2q}(w-y)\,\mathrm{d}z\mathrm{d}w,
\end{align} with $J$ defined in (\ref{eq.setup_5}). Here it is easy to check that $\widetilde{G}\in\Ec$.
\end{enumerate}
\end{definition}
\begin{prop}\label{prop.renorm2} Given $\kappa$, we work as in Definition \ref{def.renormterm}, each time choosing a smallest and leftmost fully paired subinterval and reducing $\kappa$ to $\kappa'$ by removing that subinterval. In the end we get a set of fully paired intervals, whose \emph{left} and \emph{right} endpoints are uniquely determined indices in $[1,m]$, call them $\ell_i$ and $r_i$ for $1\leq i\leq s$. Then we have
\begin{multline}\label{eq.setup_7.5}
\Rc\Ic_{m,\kappa}=\lambda_\eps^m\int_{(\Tb^4)^m}\prod_{\substack{j=0\\j\not\in\{r_1,\cdots,r_s\}}}^m G(x_j-x_{j+1})\prod_{i=1}^s \big(G(x_{r_i}-x_{r_i+1})-G(x_{\ell_i}-x_{r_i+1})\big)\\\times\prod_{\{i,j\}\in\kappa}\eta_\eps(x_i-x_j)\Wick{\prod_{i\in S}\xi_\eps(x_i)}\,\mathrm{d}x_1\cdots\mathrm{d}x_m.
\end{multline}
\end{prop}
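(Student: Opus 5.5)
The plan is to prove \eqref{eq.setup_7.5} by induction on the number $s$ of reduction steps in Definition \ref{def.renormterm}. The key point is to track not only the kernels $G_j$ but the original integration variables. Each reduction should amount to a single substitution $x_{r}\mapsto x_{\ell}$ in exactly one Green's function factor, namely the one leaving the right endpoint $r$.

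\textbf{Step 1: a single reduction.} Take the smallest, leftmost fully paired subinterval $[p+1,p+2q]$ and set $z=x_{p+1}$, $w=x_{p+2q}$, so $\ell=p+1$ and $r=p+2q$. I will expand \eqref{eq.setup_7} into two terms. The first is
\[
\int G_p(x_p-z)\,J(z-w)\,G_{p+2q}(w-x_{p+2q+1})\,\mathrm{d}z\,\mathrm{d}w .
\]
Unfolding $J$ via \eqref{eq.setup_5}, this is exactly the original integrand over $x_{p+1},\dots,x_{p+2q}$, carrying the interior $\eta_\eps$ pairings of $\kappa$ and the factor $\lambda_\eps^{2q}$. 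For the counterterm, I will integrate out the $\dirac(z-w)$ and reinsert $\int J=\int J(z-w)\,\mathrm{d}w$. This gives
\[
\int G_p(x_p-z)\,J(z-w)\,G_{p+2q}(z-x_{p+2q+1})\,\mathrm{d}z\,\mathrm{d}w .
\]
So $\widetilde G(x_p-x_{p+2q+1})$ is the original integral over the interior variables, with $G_{p+2q}(x_r-x_{r+1})$ replaced by the difference $G_{p+2q}(x_r-x_{r+1})-G_{p+2q}(x_\ell-x_{r+1})$. Both the Wick factor $\Wick{\prod_{i\in S}\xi_\eps(x_i)}$ and the pairs of $\kappa'$ involve only indices outside $[p+1,p+2q]$. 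Since the pairs of $\kappa$ split as $\kappa'$ together with the pairs inside the interval, these factors are untouched, and the powers of $\lambda_\eps$ combine to $\lambda_\eps^m$.

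\textbf{Step 2: iteration.} I will prove a slightly more general claim by induction on the number of fully paired subintervals encountered. The claim is that for arbitrary $G_j\in\Ec$, $\Rc\Ic_{m,\kappa}[G_0,\dots,G_m]$ equals the right-hand side of \eqref{eq.setup_7.5} with $G$ on edge $j$ replaced by $G_j$. For the inductive step, apply the hypothesis to $\kappa'$ on the reduced interval with the edge $\widetilde G$. Then substitute the unfolded form of $\widetilde G$ from Step 1. The formula for $\kappa'$ is multilinear in the edge kernels. The only subtlety is the case where the edge carrying $\widetilde G$ is itself the edge leaving a later right endpoint $r'$ of the reduced problem, which happens when $r'=p$. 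The inductive formula then replaces $\widetilde G(x_{r'}-x_{p+2q+1})$ by $\widetilde G(x_{r'}-x_{p+2q+1})-\widetilde G(x_{\ell'}-x_{p+2q+1})$. In the unfolded form, $x_{r'}=x_p$ enters only through the first factor $G_p(x_p-x_{p+1})$. So this difference is exactly the unfolded integral with $G_p(x_{r'}-x_{r'+1})$ replaced by $G_p(x_{r'}-x_{r'+1})-G_p(x_{\ell'}-x_{r'+1})$, where $r'+1=p+1$ is the original index. An edge is indexed by its left endpoint, and the right endpoints $r_i$ are distinct indices. Hence the differences land on distinct original edges $(r_i,r_i+1)$, and the modifications from different steps commute and do not interfere.

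\textbf{Main obstacle.} The main difficulty is this bookkeeping under nesting and adjacency: intervals containing earlier-removed intervals, intervals whose right neighbour $r_i+1$ is the left endpoint of another removed interval, and the final step where the interval may be $[1,m]$ with edges $x_0=x$ and $x_{m+1}=y$. In particular, I must check that the endpoints $\ell_i,r_i$ recorded at each step are original indices. This holds because the reduced interval keeps the original labels of the surviving indices, and each removed interval's endpoints are surviving indices at the time of its removal. I must also check that the inner $J$ of an outer interval, built from already modified kernels, unfolds consistently. Both points follow from the same ``unfold and substitute'' argument, applied with care to which variable each kernel depends on.
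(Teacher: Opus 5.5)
Your proposal is correct and follows essentially the same route as the paper. The paper uses the counterterm identity \eqref{eq.setup_new1} to rewrite $\widetilde G$, so that $\Ic_{m-2q,\kappa'}[\cdots,\widetilde G,\cdots]$ becomes $\Ic_{m,\kappa}$ with $G_r(x_r-x_{r+1})$ replaced by $G_r(x_r-x_{r+1})-G_r(x_\ell-x_{r+1})$, and then iterates; your induction over arbitrary kernels and ordered index sets just makes its ``and so on'' explicit. (Under the leftmost rule, the case $r'=p$ that you single out never actually arises: a fully paired interval ending at $p$ would contain a minimal fully paired interval to the left of $[p+1,p+2q]$, which would be chosen first. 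Treating it does no harm.)
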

\begin{proof} Since $\Rc\Ic_{m,\kappa}$ is defined inductively, we consider the first step of reducing $\Ic_{m,\kappa}$ to $\Ic_{m-2q,\kappa'}$ as in Definition \ref{def.renormterm} (3). In (\ref{eq.setup_7}) we have
\begin{equation}
\label{eq.setup_new1}
\begin{aligned}
\widetilde{G}(x-y)&=\int_{(\Tb^4)^2}G_p(x-z)\bigg(J(z-w)-\int_{\Tb^4}J \cdot \dirac(z-w)\bigg)G_{p+2q}(w-y)\,\mathrm{d}z\mathrm{d}w\\
&=\int_{(\Tb^4)^2}G_p(x-z)J(z-w)\cdot\big(G_{p+2q}(w-y)-G_{p+2q}(z-y)\big)\,\mathrm{d}z\mathrm{d}w.
\end{aligned}
\end{equation}

If we consider $\Ic_{m-2q,\kappa'}[\cdots,\widetilde{G},\cdots]$ in (\ref{eq.setup_6}) and plug in the $\widetilde{G}$ defined by (\ref{eq.setup_new1}), then we get exactly $\Ic_{m,\kappa}$ but \emph{with the factor $G_{r}(x_r-x_{r+1})$ replaced by the difference $G_{r}(x_r-x_{r+1})-G_r(x_\ell-x_{r+1})$}, where $\ell=p+1$ and $r=p+2q$ are the left and right endpoints of the fully paired interval $[p+1,p+2q]$ chosen in Definition \ref{def.renormterm} (2).

As such, we know that $\Ic_{m-2q,\kappa'}$ equals the same expression $\Ic_{m,\kappa}$, but with $G_{r}(x_r-x_{r+1})$ replaced by $G_{r}(x_r-x_{r+1})-G_r(x_\ell-x_{r+1})$. We then proceed by renormalizing $\Ic_{m-2q,\kappa'}$ and so on, where each time we take one new difference; in the end this leads to (\ref{eq.setup_7.5}).
\end{proof}

\subsection{Renormalized equation}\label{sec.setup_3} By discussions in Section \ref{sec.setup_2}, if $m=2q$ is even and all indices in $[1,m]$ are fully paired under $\kappa$, then $\Ic_{m,\kappa}=\Ic_{m,\kappa}(x-y)$ is a deterministic function of $x-y$, and so is its renormalization $\Rc\Ic_{m,\kappa}$. Now, for each $m=2q$, define the renormalization constant $\Cc_{2q}$ as follows.

Let $\sigma$ be a full pairing of $\{1,\cdots,2q\}$, which cannot be formed by concatenating two fully paired subintervals. Equivalently, $\sigma$ is formed by starting with a \emph{primitive} pairing $(2p,\kappa_0)$ and inserting into it finitely many disjoint fully paired subintervals $(2p_i,\kappa_i)$ (these $\kappa_i$ may or may not be primitive). We then define
\begin{equation}\label{eq.setup_8} \Jc_{2q,\sigma}(z-w):=\lambda_{\eps}^{2p}\int_{(\Tb^4)^{2p-2}}\prod_{j=1}^{2p-1}G_j(x_j-x_{j+1})\prod_{\{i,j\}\in\kappa_0}\eta_\eps(x_i-x_j)\,\mathrm{d}x_2\cdots\mathrm{d}x_{2p-1},
\end{equation} where $x_1=z$ and $x_{2p}=w$, and 
\begin{equation}\label{eq.setup_9}
G_j=
\left\{
\begin{aligned}
&\Rc\Ic_{2p_i,\kappa_i},&&\textrm{if the pairing $\kappa_i$ is inserted between $j$ and $j+1$;}\\
&G,&&\textrm{otherwise.}
\end{aligned}
\right.
\end{equation} Now define
\begin{equation}\label{eq.setup_10}
\Cc_{2q,\sigma}:=\int_{\Tb^4}\Jc_{2q,\sigma}(z)\,\mathrm{d}z,\qquad \Cc_{2q}=\sum_{\sigma}\Cc_{2q,\sigma}.
\end{equation}

Finally, let $A=\lfloor|\log\eps|\rfloor$, we define the renormalization constant $\Cc_\eps$ in (\ref{eq.intro_1}) as
\begin{equation}\label{eq.setup_11}
\Cc_\eps:=\sum_{q\leq A}\Cc_{2q}.
\end{equation}
\begin{prop}\label{prop.renorm_3} Given $\sigma$ as in (\ref{eq.setup_8}). Consider the process in Definition \ref{def.renormterm} and the set of fully paired \emph{proper} subintervals obtained (i.e. not including $[1,2q]$ itself), with left and right endpoints $\ell_i$ and $r_i$ for $1\leq i\leq s$. Then for the $\Jc_{2q,\sigma}$ in (\ref{eq.setup_8}), we have
\begin{multline}\label{eq.setup_11.5}
\Jc_{2q,\sigma}(z-w)=\lambda_{\eps}^{2q}\int_{(\Tb^4)^{2q-2}}\prod_{\substack{j=1\\j\not\in\{r_1,\cdots,r_s\}}}^{2q-1}G(x_j-x_{j+1})\prod_{i=1}^s\big(G(x_{r_i}-x_{r_i+1})-G(x_{\ell_i}-x_{r_i+1})\big)\\\times\prod_{\{i,j\}\in\sigma}\eta_\eps(x_i-x_j)\,\mathrm{d}x_2\cdots\mathrm{d}x_{2q-1},
\end{multline} where $x_1:=z$ and $x_{2q}:=w$.
\end{prop}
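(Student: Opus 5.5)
The plan is to reduce the statement to Proposition \ref{prop.renorm2}, applied once to each inserted block and then to the primitive outer pairing. I will argue by induction on $q$, or equivalently on the nesting depth of the insertions. By construction, $\sigma$ is a primitive pairing $(2p,\kappa_0)$ with disjoint fully paired blocks $(2p_i,\kappa_i)$ inserted between consecutive indices $j$ and $j+1$. The factor $G_j$ at such a position is $\Rc\Ic_{2p_i,\kappa_i}$. This is the renormalized, fully paired term, so it is a deterministic function of the difference of its endpoints.

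The first step is to expand each $G_j$ using Proposition \ref{prop.renorm2}. Take a block $\kappa_i$ of length $2p_i$, and rename its internal variables so that they sit in positions $j+1,\dots,j+2p_i$ of the full interval $[1,2q]$. Proposition \ref{prop.renorm2} then writes $\Rc\Ic_{2p_i,\kappa_i}(x_j - x_{j+2p_i+1})$ as an integral over the internal variables, with the following factors:
\begin{enumerate}[{(a)}]
\item the plain factors $G$ on its links;
\item a difference factor $G(x_{r}-x_{r+1})-G(x_\ell-x_{r+1})$ for every fully paired subinterval $[\ell,r]$ found by the renormalization process inside the block, including the block itself, since for $\Rc\Ic$ of a fully paired interval the whole interval is the last one removed;
\item the $\eta_\eps$ factors from $\kappa_i$, with the Wick product empty because the block is fully paired.
\end{enumerate}
Here the external legs $G_0$ and $G_{m}$ of the block are the two factors $G$ connecting $x_j$ to the block and the block to $x_{j+2p_i+1}$. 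The difference attached to the whole block therefore sits on the link from its right endpoint to $x_{j+2p_i+1}$.

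The second step is to substitute these expansions into \eqref{eq.setup_8} and use Fubini. This gives an integral over all $2q-2$ interior variables. It contains $\lambda_\eps^{2p+\sum 2p_i}=\lambda_\eps^{2q}$, all the $\eta_\eps$ factors from $\sigma$, and plain $G$ factors on every link except the right-endpoint links of the fully paired proper subintervals. The outer primitive pairing $\kappa_0$ contributes no differences, since it is not a proper subinterval and we do not renormalize it here. That matches \eqref{eq.setup_11.5}.

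The third step, which I expect to be the main bookkeeping obstacle, is to check that the collection of intervals obtained this way coincides with the set $\{[\ell_i,r_i]\}$ produced by running Definition \ref{def.renormterm} on $\sigma$, excluding $[1,2q]$. Since $\kappa_0$ is primitive, no fully paired proper subinterval of $[1,2q]$ can straddle an inserted block and primitive indices. Otherwise, collapsing the blocks would give a fully paired proper subinterval of $\kappa_0$, or of a block's own structure, contradicting primitivity. Hence every fully paired proper subinterval is contained in some inserted block. The smallest-leftmost reduction process on $\sigma$ therefore acts independently inside each block, exactly as it does when computing $\Rc\Ic_{2p_i,\kappa_i}$, and it removes each whole block before reaching $[1,2q]$. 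With care about re-indexing, the endpoints $\ell_i, r_i$ match, and the proposition follows.
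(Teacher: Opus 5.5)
Your proposal is correct and follows the same route as the paper: substitute (\ref{eq.setup_9}) into (\ref{eq.setup_8}), expand each $\Rc\Ic_{2p_i,\kappa_i}$ via Proposition \ref{prop.renorm2}, and observe that the intervals the process produces on $\sigma$ are exactly the disjoint union of those produced inside the blocks; your Step 3 spells out what the paper calls ``clear'', and the induction on $q$ you announce is never actually needed. One small correction to your item (b): when a block $\kappa_i$ is a concatenation, the last interval removed inside it is the final reduced remainder, not the whole block $[j+1,j+2p_i]$, but this does not affect the argument because Step 3 matches the endpoints through the process itself.
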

\begin{proof} Note that $\sigma$ is formed by inserting all the $(2p_i,\kappa_i)$ to $(2p,\kappa_0)$, and $\kappa_0$ is primitive. Consider the collection of intervals $[\ell_i,r_i]$ in (\ref{eq.setup_11.5}), denote it by $A$. For each $(2p_i,\kappa_i)$ inserted, consider the expression $\Rc\Ic_{2p_i,\kappa_i}$ in (\ref{eq.setup_7.5}), which contains an associated collection of intervals $[\ell_*,r_*]$ (with subscripts $i$ replaced by $*$ to avoid repetitions); denote it by $A_i$. It is clear that $A=\sqcup_i A_i$, and that $\ell_i,r_i\not\in\{1,2q\}$. Now, the desired equality (\ref{eq.setup_11.5}) follows by plugging in (\ref{eq.setup_8}), then (\ref{eq.setup_9}) for each $G_j$, and then (\ref{eq.setup_7.5}) for each $\Rc\Ic_{2p_i,\kappa_i}$.
\end{proof}
\subsection{Parametrix of the renormalized operator}\label{sec.setup_4} Now we consider the operator 
\begin{equation}\label{eq.setup_12}\Lc_\eps=1-\Delta-\lambda_{\eps}\cdot\xi_\eps+\Cc_\eps
\end{equation}in (\ref{eq.intro_1}), with $\Cc_\eps$ defined in (\ref{eq.setup_11}). We will construct a parametrix $\Pc_\eps$, and associated error terms $\Rc_\eps$ and $\Rc_\eps'$, such that
\begin{equation}\label{eq.setup_13}\Lc_\eps \Pc_\eps=1+\Rc_\eps,\quad \Pc_\eps \Lc_\eps=1+\Rc_\eps';\qquad \|\Rc_\eps\|,\,\|\Rc_\eps'\|\ll 1
\end{equation} in a suitable space with high probability, which then allows us to define \[\Lc_\eps^{-1}=\Pc_\eps(1+\Rc_\eps)^{-1}=(1+\Rc_\eps')^{-1}\Pc_\eps.\]

We define the parametrix $\Pc_\eps$ as follows:
\begin{equation}\label{eq.setup_14}\Pc_\eps=\sum_{m=0}^A\Pc_m,\quad \Pc_m=\sum_{\kappa}\Rc\Ic_{m,\kappa}.
\end{equation} To calculate $\Lc_\eps\Pc_\eps$ and $\Pc_\eps\Lc_\eps$, we need the following proposition.
\begin{prop}\label{prop.renorm3}For any $m\geq 1$, we have
\begin{equation}\label{eq.setup_15}\lambda_\eps\xi_\eps\circ \Pc_{m-1}=(1-\Delta)\circ\Pc_m+\sum_{1\leq q\leq m/2}\Cc_{2q}\Pc_{m-2q};
\end{equation}
\begin{equation}\label{eq.setup_16}\Pc_{m-1}\circ \lambda_\eps\xi_\eps=\Pc_m\circ (1-\Delta)+\sum_{1\leq q\leq m/2}\Cc_{2q}\Pc_{m-2q}.
\end{equation}
\end{prop}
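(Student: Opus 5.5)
We prove (\ref{eq.setup_15}); (\ref{eq.setup_16}) is the mirror image. The plan is to read both sides as sums over pairs $(m,\kappa)$, where $\kappa$ is a partial pairing of $[1,m]$, and to match them term by term according to the status of the leftmost index $1$ (the noise adjacent to $x_0=x$).

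\textbf{Left-hand side.} The first step is to expand $\lambda_\eps\xi_\eps(x)\cdot\Rc\Ic_{m-1,\kappa}(x,y)$. Relabel the variables of $\Rc\Ic_{m-1,\kappa}$ as $x_2,\dots,x_m$ and set $x_1:=x$. The product $\xi_\eps(x_1)\Wick{\prod_{i\in S}\xi_\eps(x_i)}$ is re-Wick-ordered by (\ref{eq.setup_2}): either $1$ stays single, or $1$ is paired with some $j\in S$, producing $\eta_\eps(x_1-x_j)$. Write $\kappa^\ast$ for the resulting partial pairing of $[1,m]$.

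\textbf{Right-hand side, main term.} Next consider $(1-\Delta)_x\Rc\Ic_{m,\kappa^\ast}$ for an arbitrary partial pairing $\kappa^\ast$ of $[1,m]$. In formula (\ref{eq.setup_7.5}), the operator $(1-\Delta)_x$ acts only on the factor $G(x_0-x_1)$ and turns it into $\dirac(x-x_1)$. This is valid \emph{provided} $0$ is not among the right endpoints $r_i$. Since $r_i\ge1$ always, this holds, so $x_0=x$ appears only in $G(x_0-x_1)$; even when $\ell_i=1$, the difference term involves $x_1$ and not $x_0$.

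Two cases then arise.
\begin{itemize}
\item[(a)] Suppose no fully paired interval in the reduction of $\kappa^\ast$ starts at $1$. Then removing the index $1$ gives exactly the pairing $\kappa$ of $[2,m]$ together with the same collection of intervals, and the result matches the left-hand-side term. One must check that the ``smallest and leftmost'' reduction procedure yields the same intervals with and without the index $1$. This is clear when $1$ is single. When $1$ is paired with $j$, one uses that any fully paired interval not containing $1$ is unaffected, and that any interval containing $1$ must start at $1$, which is excluded here.
\item[(b)] Suppose instead that some reduced interval $[1,r]$ starts at $1$. Take the maximal one, say $[1,2q]$. Its factor $G(x_{2q}-x_{2q+1})-G(x_1-x_{2q+1})$ prevents a direct match with the left-hand side. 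These terms are exactly the ones that must reproduce $\Cc_{2q}\Pc_{m-2q}$.
\end{itemize}

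\textbf{Handling case (b).} Split the difference. The piece $-G(x_1-x_{2q+1})$, after integrating out $x_2,\dots,x_{2q}$, gives $-\int\Jc\cdot\Rc\Ic_{m-2q,\kappa''}(x,y)$. Here $\kappa''$ is the restriction of $\kappa^\ast$ to $[2q+1,m]$, and $\Jc$ is built from the nested structure inside $[1,2q]$. This sub-block decomposes uniquely into a concatenation $\sigma_1\cdots\sigma_k$ of irreducible full pairings, which are exactly the $\sigma$'s of (\ref{eq.setup_10}), with the inner insertions renormalized as in Proposition \ref{prop.renorm_3}. The piece $G(x_{2q}-x_{2q+1})$ instead matches left-hand-side terms in which index $1$ is paired inside a block.

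The key bookkeeping is therefore an inclusion--exclusion or telescoping over the concatenated irreducible blocks. One peels off the first irreducible block $\sigma$ on $[1,2q]$ and sums over all $\sigma$ to obtain $\Cc_{2q}$, using (\ref{eq.setup_10}) and Proposition \ref{prop.renorm_3} to identify $\int\Jc_{2q,\sigma}=\Cc_{2q,\sigma}$. What remains, the term $\Rc\Ic_{m-2q,\cdot}$ summed over all pairings of $[2q+1,m]$, is $\Pc_{m-2q}$, including the further blocks starting at $2q+1$. The remaining $+G$ piece has $x_1=x$ convolved against $\Jc$, and should cancel against left-hand-side terms where $1$ pairs with some $j$ forming a new fully paired interval.

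\textbf{Main obstacle.} I expect the obstacle to be this matching: showing that the reduction intervals of $\kappa^\ast$ are compatible with those of $\kappa$ under adding index $1$, so that each left-hand-side term maps bijectively onto a right-hand-side term, with the counterterms $\Cc_{2q}\Pc_{m-2q}$ accounting precisely for the intervals beginning at $1$. I would first verify it for $m\le4$ by hand, then organize the general case by induction on $m$ via Definition \ref{def.renormterm}. For (\ref{eq.setup_16}), the symmetric argument uses the last index and $(1-\Delta)_y$. This requires rewriting (\ref{eq.setup_7.5}) in the left-right reflected form, using the evenness of $G\in\Ec$, $\eta\in\Ec$ and of each $\Jc$.
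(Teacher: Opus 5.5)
Your overall architecture is the paper's. You Wick-expand $\xi_\eps(x_1)$ against the single indices of $\Rc\Ic_{m-1,\kappa}$ and sort by the status of index $1$. In the block case you split the difference factor, which is the paper's splitting $\Jc_{2q,\sigma}=(\Jc_{2q,\sigma}-\Cc_{2q,\sigma}\dirac)+\Cc_{2q,\sigma}\dirac$ read backwards; the paper applies $(1-\Delta)^{-1}$ on the left instead of $(1-\Delta)_x$ to the right side, which is cosmetic. Case (a) is fine.

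Case (b), however, carries the whole content of the proposition. You leave it at ``should cancel'' and defer the matching to small-$m$ checks, and part of your description of it is wrong. Index $1$ is removed exactly once in the reduction, so at most one reduced interval starts at $1$. That interval is $[1,2q]$ with $q$ \emph{minimal} such that $[1,2q]$ is fully paired, so $\sigma=\kappa^\ast|_{[1,2q]}$ is automatically irreducible. There is no concatenation $\sigma_1\cdots\sigma_k$ and no telescoping. For a concatenated block the reduction produces a separate difference for each $\sigma_i$, so the factor $G(x_{2q}-x_{2q+1})-G(x_1-x_{2q+1})$ you attach to a ``maximal'' block never occurs. Treating such a block as one unit would also generate products of constants that are absent from the right-hand side.

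What is missing is the observation that makes the $+G(x_{2q}-x_{2q+1})$ piece \emph{equal} to a left-hand-side term; this is an identity, not a cancellation. Three facts close the argument.
\begin{enumerate}
\item By minimality of $q$, the partner $j$ of $1$ lies in the primitive skeleton $\kappa_0$ of $\sigma$. Hence every block inserted into $\kappa_0$ is already a fully paired interval of $\kappa=\kappa^\ast\setminus\{\{1,j\}\}$ on $[2,m]$, in which $j$ is single.
\item No fully paired interval of $\kappa$ ends at $2q$ or crosses from $2q$ to $2q+1$. Otherwise some $[1,a-1]$ with $2\le a\le 2q$ would be fully paired, contradicting irreducibility.
\item Consequently $\Rc\Ic_{m-1,\kappa}$ carries exactly the difference factors of $\Rc\Ic_{m,\kappa^\ast}$ except the one at $r=2q$. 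So the term obtained by contracting $\xi_\eps(x)$ with $\xi_\eps(x_j)$ in $\lambda_\eps\xi_\eps(x)\Rc\Ic_{m-1,\kappa}(x,y)$ equals $\int\Jc_{2q,\sigma}(x-w)\,\Rc\Ic_{m-2q,\kappa''}(w,y)\,\mathrm{d}w$, by Proposition \ref{prop.renorm_3}. This is precisely your $+G$ piece, while the $-G$ piece is $-\Cc_{2q,\sigma}\Rc\Ic_{m-2q,\kappa''}$.
\end{enumerate}
Finally, $(\kappa,j)\mapsto(\sigma,\kappa'')$ is a bijection from the left-hand-side terms in which $1$ closes a fully paired interval onto the pairs (irreducible full pairing of $[1,2q]$, partial pairing of $[2q+1,m]$). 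That is exactly the index set of $\Cc_{2q}\Pc_{m-2q}$. With these facts the identity follows directly for all $m$, with no induction and no hand checks.
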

\begin{proof} We first prove (\ref{eq.setup_15}). We multiply both sides of (\ref{eq.setup_15}) by $(1-\Delta)^{-1}$ on the left, and expand the kernel of $(1-\Delta)^{-1}\circ\lambda_\eps\xi_\eps\circ \Pc_{m-1}$:
\[(1-\Delta)^{-1}\circ\lambda_\eps\xi_\eps\circ \Pc_{m-1}(x,y)=\lambda_\eps\sum_{\kappa}\int_{\Tb^4}G(x-x_1)\xi_\eps(x_1)\cdot\Rc\Ic_{m-1,\kappa}(x_1,y)\,\mathrm{d}x_1,\] where the index set in $\Rc\Ic_{m-1,\kappa}$ is $[2,m]$, and $\kappa$ runs over all partial pairings of this set. By multiplying $\xi_\eps(x_1)$ with $\Wick{\prod_{j\in S}\xi_\eps(x_j)}$ (where $S$ is the set of single indices under $\kappa$, see (\ref{eq.setup_4})) and applying Wick's theorem, we can divide this term into contributions consisting of the following 3 cases:
\begin{enumerate}
\item If $1$ is not paired, i.e. a single index. In this way we get exactly $\Rc\Ic_{m,\kappa}$, where $\kappa$ directly extends to a partial pairing of $[1,m]$ (with $1$ being a single index). Conversely, all partial pairings $\kappa$ of $[1,m]$ where $1$ is a single index can be uniquely obtained in this way.
\item If $1$ is paired (extending $\kappa$ to a partial pairing $\kappa'$ of $[1,m]$), and no subinterval $[1,2q]$ becomes fully paired in $\kappa'$. In this case the new pair involving $1$ does not lead to any additional renormalization, so this term is still exactly $\Rc\Ic_{m,\kappa'}$. Conversely, all partial pairings $\kappa'$ of $[1,m]$ where $1$ is paired but does not form any fully paired subinterval $[1,2q]$, can be uniquely obtained in this way.
\item If $1$ is paired (extending $\kappa$ to a partial pairing $\kappa'$ of $[1,m]$), and there is a fully paired subinterval $[1,2q]$, we choose a smallest $q$, which is then uniquely determined. Now let $\sigma_1$ be the set of pairs within $[1,2q]$, and $\sigma_2$ be those in $[2q+1,m]$, then by the smallest assumption of $q$, we know that $\sigma_1$ cannot be formed by concatenating two fully paired intervals.

This means, see Section \ref{sec.setup_3}, that $\sigma_1$ is formed by starting with a \emph{primitive} pairing $(2p,\kappa_0)$ and inserting into it finitely many disjoint (primitive or non-primitive) fully paired subintervals $(2p_i,\kappa_i)$. The pair involving $1$ must belong to $\kappa_0$, so all the $\kappa_i$ already occur in the partial pairing $\kappa$ of $[2,m]$.

Recall $\Jc_{2q,\sigma_1}$ as in (\ref{eq.setup_8}), then the contribution of this case to $(1-\Delta)^{-1}\circ\lambda_\eps\xi_\eps\circ \Pc_{m-1}$ is equal to
\begin{equation}\label{eq.setup_16.5}\int_{(\Tb^4)^2} G(x-x_1)\Jc_{2q,\sigma_1}(x_1-x_{2q})\Rc\Ic_{m-2q,\sigma_2}(x_{2q},y)\,\mathrm{d}x_1\mathrm{d}x_{2q}.
\end{equation} Recall that \[\int_{\Tb^4}\Jc_{2q,\sigma_1}=\Cc_{2q,\sigma_1},\qquad\Cc_{2q}=\sum_{\sigma_1}\Cc_{2q,\sigma_1},\] we can decompose
\begin{equation}\label{eq.setup_16.8}\Jc_{2q,\sigma_1}(x_1-x_{2q})=\big(\Jc_{2q,\sigma_1}(x_1-x_{2q})-\Cc_{2q,\sigma_1}\cdot\dirac(x_1-x_{2q})\big)+\big(\Cc_{2q,\sigma_1}\cdot\dirac(x_1-x_{2q})\big).\end{equation} If we plug in the first term in (\ref{eq.setup_16.8}), then the resulting contribution to (\ref{eq.setup_16.5}) is exactly equal to $\Rc\Ic_{m,\kappa'}$. Conversely, all partial pairings $\kappa'$ of $[1,m]$ where $[1,2q]$ is the first fully paired interval can be uniquely obtained in this way.

If we plug in the second term in (\ref{eq.setup_16.8}), then the resulting contribution to (\ref{eq.setup_16.5}) is
\[\Cc_{2q,\sigma_1}\cdot\int_{\Tb^4} G(x-x_1)\Rc\Ic_{m-2q,\sigma_2}(x_1,y)\,\mathrm{d}x_1=((1-\Delta)^{-1}\circ (\Cc_{2q,\sigma_1}\cdot\Rc\Ic_{m-2q,\sigma_2}))(x,y).\] 
Clearly summing over all $\kappa$ (and all possible pairs involving $1$) is equivalent to summing over all choices of $q$ and $(\sigma_1,\sigma_2)$, and vice versa. This contribution then gives 
\[\sum_q\sum_{\sigma_1,\sigma_2}(1-\Delta)^{-1}\circ(\Cc_{2q,\sigma_1}\cdot\Rc\Ic_{m-2q,\sigma_2})=\sum_q(1-\Delta)^{-1}\circ(\Cc_{2q}\cdot\Pc_{m-2q}).\]
\end{enumerate}
Putting together (1)--(3) above, we get
\[(1-\Delta)^{-1}\circ\lambda_\eps\xi_\eps\circ \Pc_{m-1}=\Pc_m+\sum_{1\leq q\leq m/2}(1-\Delta)^{-1}\circ(\Cc_{2q}\Pc_{m-2q});\] multiplying both sides by $(1-\Delta)$ on the left, we get
\[\lambda_\eps\xi_\eps\circ \Pc_{m-1}=(1-\Delta)\Pc_m+\sum_{1\leq q\leq m/2}\Cc_{2q}\Pc_{m-2q},\] which proves (\ref{eq.setup_15}). As for (\ref{eq.setup_16}), note that the formulas (\ref{eq.setup_5})--(\ref{eq.setup_7}) in Definition \ref{def.renormterm} and (\ref{eq.setup_8})--(\ref{eq.setup_11}) in Section \ref{sec.setup_3} are symmetric with respect to ``left" and ``right"; in Definition \ref{def.renormterm} (2) we are choosing the leftmost subinterval, but the final expression would not change if each time we choose the rightmost one. Therefore (\ref{eq.setup_16}) follows from the same proof with ``left" and ``right" switched.
\end{proof}
Now, using Proposition \ref{prop.renorm3}, we can calculate $\Lc_\eps\Pc_\eps$ and $\Pc_\eps\Lc_\eps$. By symmetry, we only calculate the former. Note that $\Pc_0=(1-\Delta)^{-1}$, and using (\ref{eq.setup_15}), we can expand that
\begin{equation}\label{eq.setup_17}
\begin{aligned}
\Lc_\eps\Pc_\eps&=\bigg(1-\Delta-\lambda_\eps\xi_\eps+\sum_{q\leq A}\Cc_{2q}\bigg)\circ\bigg(\sum_{m=0}^A\Pc_m\bigg)\\
&=(1-\Delta)\Pc_0+\sum_{m=1}^A(1-\Delta)\circ\Pc_m-\sum_{m=0}^A\lambda_\eps\xi_\eps\circ \Pc_m+\sum_{m,q\leq A}\Cc_{2q}\Pc_m\\
&=1+\sum_{m=1}^A \lambda_\eps\xi_\eps\circ \Pc_{m-1}-\sum_{m=1}^A\sum_{1\leq q\leq m/2}\Cc_{2q}\Pc_{m-2q}-\sum_{m=0}^A\lambda_\eps\xi_\eps\circ \Pc_m+\sum_{m,q\leq A}\Cc_{2q}\Pc_m\\
&=1-\lambda_\eps\xi_\eps\circ\Pc_A+\sum_{\substack{m,q\leq A\\m+2q>A}}\Cc_{2q}\Pc_m.
\end{aligned}
\end{equation} Therefore we get
\begin{equation}\label{eq.setup_18}\Rc_{\eps}=-\lambda_\eps\xi_\eps\circ\Pc_A+\sum_{\substack{m,q\leq A\\m+2q>A}}\Cc_{2q}\Pc_m;\qquad\textrm{similarly}\qquad \Rc_{\eps}'=-\Pc_A\circ\lambda_\eps\xi_\eps+\sum_{\substack{m,q\leq A\\m+2q>A}}\Cc_{2q}\Pc_m.
\end{equation}
\subsection{Main estimates}\label{sec.setup_6}
In this subsection, we state the main estimates for $\Pc_m$ and $\Cc_{2q}$, namely Propositions \ref{prop.main_1} and \ref{prop.main_2}, which we will prove in later sections. Then, we show that these estimates are enough to prove Theorem \ref{thm.main}.
\begin{prop}\label{prop.main_1} For each $1\leq q\leq A$, we have
\begin{equation}\label{eq.setup_19}
|\Cc_{2q}|\leq \eps^{-2}|\log\eps|^{-1}\cdot(C\lambda)^{2q}.
\end{equation}Moreover, define the Fourier coefficients
\begin{equation}\label{eq.ft}\widehat{F}(\alpha,\beta)=\int_{(\Tb^4)^2}e^{i(\alpha\cdot x+\beta\cdot y)}F(x,y)\,\mathrm{d}x\mathrm{d}y,\quad\alpha,\beta\in\Zb^4.
\end{equation} Then for each $1\leq m\leq A$ and each $(\alpha,\beta)$, we have
\begin{equation}\label{eq.setup_20}\Eb\big(|\widehat{\Pc_m}(\alpha,\beta)|^2\big)\leq \lambda_\eps^2\cdot C(C\lambda)^{2m-2}\cdot\min\big(1,\eps^{-8}\langle \alpha\rangle^{-4}\langle \beta\rangle^{-4}\langle \eps^2(\alpha+\beta)\rangle^{-8}\big).
\end{equation}
\end{prop}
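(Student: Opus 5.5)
The plan is to reduce both estimates in Proposition~\ref{prop.main_1} to a single multiscale bound for Feynman integrals of the form (\ref{eq.idea_6}). That bound combines a volume estimate for Hepp-tree configurations with an integrand estimate for the sum over permutations, as outlined in Section~\ref{sec.idea}.

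\textbf{Step 1: the renormalization constants.} For (\ref{eq.setup_19}), I start from the representation (\ref{eq.setup_11.5}) of Proposition~\ref{prop.renorm_3}. Integrating in $z-w$ expresses $\Cc_{2q}$ as a sum over pairings $\sigma$ of $[1,2q]$ that are not concatenations. The integrand is a product of free Green's functions and of differences $G(x_{r_i}-x_{r_i+1})-G(x_{\ell_i}-x_{r_i+1})$.

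For each difference factor, Taylor expansion gives a gain of $|x_{\ell_i}-x_{r_i}|/|x_{r_i}-x_{r_i+1}|$. This gain is effective precisely when the fully paired block $[\ell_i,r_i]$ lives at a scale smaller than its distance to the next point. That is exactly the ``unsafe group'' situation in which the block would otherwise produce a logarithmic (or worse) divergence. I will assume $\eta_\eps$ forces paired points to coincide up to $\eps$, so the $2q$ points collapse to $q$ distinct points $y_1,\dots,y_q$, each appearing twice. The sum over $\sigma$ then becomes a sum over orderings of a multiset with two copies per point.

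\textbf{Step 2: Hepp-tree decomposition.} Next I decompose according to a Hepp tree $\Tc$ on $y_1,\dots,y_q$ with dyadic scales $N_\nf\ge\eps$. The contribution is bounded by:
\begin{itemize}
\item the volume estimate of Proposition~\ref{prop.hepp_3}, which loses only $C^q$ after accounting for automorphisms;
\item the integrand estimate of Proposition~\ref{prop.main_3}, applied in its ``two copies per leaf'' version, which bounds the sum over orderings for a fixed index set $W$ by $C^q\,(q-|W|)!$ times the product of scale factors.
\end{itemize}
Each node not in $W$ yields a factor $N_\nf/N_{\nf^+}$. This factor kills one scale sum, either through this gain or through the renormalization difference when the subtree is fully paired.

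The top scale produces the $\eps^{-2}$: the integral $\int |z|^{-2}\eta_\eps$-type weight gives the quadratic divergence. The remaining $|W|$ logarithmic sums cost $|\log\eps|^{|W|}$, so the total is
\[
\lambda^{2q}|\log\eps|^{-q}\,\eps^{-2}\,(q-|W|)!\,|\log\eps|^{|W|}.
\]
Since $q\le A\sim|\log\eps|$, we have $(q-|W|)!\le C^q|\log\eps|^{q-|W|}$, which gives $\eps^{-2}(C\lambda)^{2q}$. The extra $|\log\eps|^{-1}$ I expect to come from the fact that the root node is never in $W$ (the $\sigma$ are primitive at the top level), exactly as in (\ref{eq.idea_7}).

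\textbf{Step 3: the second-moment bound.} For (\ref{eq.setup_20}), I expand $\Eb|\widehat{\Pc_m}|^2$ by Wick's formula. Pairings then join two copies of (\ref{eq.setup_7.5}): self-pairings inside each copy come with renormalization differences, and cross pairings between the two copies give the permutation structure of (\ref{eq.idea_2}).

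After the same collapse to distinct points and the Cauchy–Schwarz symmetrization (\ref{eq.idea_6}), Propositions~\ref{prop.hepp_3} and~\ref{prop.main_3} (via the inductive Proposition~\ref{prop.main2_2} and the one-scale bound Proposition~\ref{prop.main_4}) give the factor $(C\lambda)^{2m}$. The one remaining factor $\lambda_\eps^2$ comes from the outermost, unrenormalized scale sum being bounded rather than logarithmic.

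The Fourier decay $\eps^{-8}\langle\alpha\rangle^{-4}\langle\beta\rangle^{-4}\langle\eps^2(\alpha+\beta)\rangle^{-8}$ follows from integrating by parts in the external variables $x,y$. The end Green's functions $G(x-x_1)$ and $G(x_m-y)$ give $\langle\alpha\rangle^{-2}$ and $\langle\beta\rangle^{-2}$ per copy. The smoothing at scale $\eps$ gives decay in $\alpha+\beta$, at the cost of $\eps^{-8}$ from derivatives falling on $\eta_\eps$.

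\textbf{Main obstacle.} The hardest step is making the renormalization differences compatible with the permutation-sum estimate. The gains $N_\nf/N_{\nf^+}$ must be extracted without fixing $\sigma$ (or $\kappa$), since fixing it loses the factorial. I would first try to attach each difference to the smallest Hepp subtree containing its fully paired block, and show that the differences only ever improve the bound of Proposition~\ref{prop.main_3} node by node. I expect the subtle case to be when a fully paired block is not a single Hepp subtree, where the Taylor gain must be compared with the block's own diameter.
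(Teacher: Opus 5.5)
\textbf{There is a genuine gap.} Your Hepp-tree and permutation-sum outline matches how the paper proves the \emph{primitive} estimate, Proposition \ref{prop.primi_1}. What Proposition \ref{prop.main_1} needs beyond that estimate is control of the non-primitive pairings and the renormalization differences. This is exactly what you leave as the ``main obstacle'', and the mechanism you sketch for it fails.

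\textbf{Why the sketched mechanism fails.} Propositions \ref{prop.main_3} and \ref{prop.main2_2} hold only for $\pi$ satisfying the primitivity condition. The $\sigma$ in $\Cc_{2q}$ are in general primitive pairings with fully paired blocks inserted, so these estimates cannot be applied to the sum over $\sigma$. Unrenormalized, such blocks are quadratically divergent, and a first-order Taylor gain does not repair this.
\begin{itemize}
\item A block kernel has $\int_{|u|\sim X}|\Jc(u)|\,\mathrm{d}u\sim|\log\eps|^{-2}X^{-2}$ (already $\Jc_{4,\mathrm{prim}}\approx\lambda_\eps^4G^3$ for $|u|\gg\eps$).
\item For a block of internal scale $X$ whose next point is at distance $Y\gg X$, the gain $X|\nabla G|\sim XY^{-3}$ leaves a term of order $\eps^{-1}Y^{-3}$ after summing dyadic $X\in[\eps,Y]$.
\item After convolving with $G$, this misses the required $|x-y|^{-2}$ by a factor $|x-y|/\eps$, up to logarithms.
\item In Hepp-tree terms: a subtree whose copies form a single block gets nothing from Proposition \ref{prop.main_4} (the case $O=\varnothing$). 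It therefore lacks the factor $(N_\nf/N_{\nf^+})^2$ that every non-root node carries in (\ref{eq.vol_3_2}). Renormalization must supply two powers, not the single $N_\nf/N_{\nf^+}$ you assign to it.
\item That second-order gain exists only because the linear Taylor term vanishes once the block's internal variables are integrated out, leaving an even kernel. This cancellation is not available inside a Hepp sector of the whole configuration.
\end{itemize}

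\textbf{The missing idea.} It is the reduction of Section \ref{sec.primi}, which separates renormalization from the factorial analysis.
\begin{itemize}
\item Fix the nested family of fully paired intervals $[\ell_i,r_i]$ (endpoint set plus Dyck word, $C^q$ choices). The sum over $\sigma$ then factorizes into independent sums over \emph{primitive} pairings of the reduced blocks $\widetilde I_i$.
\item Integrating out the innermost block gives the kernel $\widetilde G$ of (\ref{eq.primi_6}). Evenness of $\Jc_{2p,\mathrm{prim}}$ kills the linear term, and the factors $|\log\eps|^{-1},|\log\eps|^{-2}$ in (\ref{eq.primi_2}) absorb the remaining logarithms. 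Hence $|\widetilde G(z)|\le(C\lambda)^{2p}|z|^{-2}$.
\item Proposition \ref{prop.primi_1} allows arbitrary $G_j\in\Ec$ with $|G_j(z)|\le|z|^{-2}$. So the block is replaced by this effective propagator and one iterates, finishing with (\ref{eq.primi_2}) for the outermost primitive pairing and an integration in $z$. The Hepp-tree estimates never see a renormalization difference.
\end{itemize}

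\textbf{Second moment and Fourier decay.} Your step for (\ref{eq.setup_20}) has the same gap. (\ref{eq.idea_6}) is the heuristic without self-pairings. The paper instead removes intra-copy blocks as above. It then peels off the remaining unrenormalized fully paired intervals, which are necessarily nested and straddle the junction $\{p,p+1\}$, using (\ref{eq.primi_2.5}). The extra factor $\eps^2+\max|x_i-x_j|^2$ there stands in for the missing edge between $x_p$ and $x_{p+1}$ once $x,y,z,w$ are integrated. Integrating (\ref{eq.primi_2.5}) is what produces $|\log\eps|^{-1}=\lambda^{-2}\lambda_\eps^2$. Likewise, the $\eps^{-8}$ in the Fourier bound comes from trading the gain $|x_m-x_a|^2$ for $\langle\beta\rangle^{-2}$ when an endpoint propagator is a renormalization difference. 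It does not come from derivatives falling on $\eta_\eps$.
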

\begin{prop}\label{prop.main_2} Define
\begin{equation}\label{eq.setup_21}H(x_1,y_1,x_2,y_2):=\int_{\Tb^4}G(x_1-z)G(y_1-z)G(x_2-z)G(y_2-z)\,\mathrm{d}z.
\end{equation} Also fix $B$ and $r$. Then, for each $1\leq m_j\leq B$ and $(\alpha_j,\beta_j)_{j\leq r}$, we have 
\begin{equation}\label{eq.setup_22}\bigg|\Eb\bigg(\prod_{j=1}^r\widehat{\Pc_{m_j}}(\alpha_j,\beta_j)\bigg)-\sum_{\kappa}\prod_{\{i,j\}\in\kappa}\Eb\big(\widehat{\Pc_{m_i}}(\alpha_i,\beta_i)\cdot\widehat{\Pc_{m_j}}(\alpha_j,\beta_j)\big)\bigg|\leq\frac{(\lambda_\eps)^r}{|\log\eps|}\cdot O_{B,r,\alpha_j,\beta_j}(1),
\end{equation} where $\kappa$ in (\ref{eq.setup_22}) runs over all full pairings of $[1,r]$.

Moreover, there exists a function $\Xf(m_1,m_2)$, which is bounded by $|\Xf|\leq C(C\lambda)^{m_1+m_2-2}$, such that for any $m_1,m_2\leq B$ and $(\alpha_1,\beta_1,\alpha_2,\beta_2)$, we have
\begin{gather}\label{eq.setup_23}
\big|\Eb\big(\widehat{\Pc_{m_1}}(\alpha_1,\beta_1)\cdot\widehat{\Pc_{m_2}}(\alpha_2,\beta_2)\big)-\lambda_\eps^2\cdot\Xf(m_1,m_2)\cdot\widehat{H}(\alpha_1,\beta_1,\alpha_2,\beta_2)\big|
\leq\frac{\lambda_\eps^2}{|\log\eps|}\cdot O_{B,\alpha_j,\beta_j}(1);\\
\label{eq.setup_23.5}
\sum_{m_1+m_2=m}\Xf(m_1,m_2)=\left\{
\begin{aligned}
&(\lambda/\sqrt{2}\pi)^{m-2},&&\textrm{for $m$ even},\\
&0,&&\textrm{for $m$ odd}.
\end{aligned}
\right.
\end{gather}
\end{prop}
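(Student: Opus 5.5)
Since $B$ and $r$ are fixed, this is a term-by-term statement: each $\widehat{\Pc_{m_j}}$ involves at most $B$ noises. My plan is to reproduce the argument of \cite{GR26} in our renormalized parametrix setting, without trying to make the constants uniform in $B$.

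\textbf{Step 1: diagram expansion.} I would write each $\Pc_{m_j}=\sum_{\kappa_j}\Rc\Ic_{m_j,\kappa_j}$ using the explicit form (\ref{eq.setup_7.5}) from Proposition \ref{prop.renorm2}. Then I expand the expectation of the product of the Wick products $\Wick{\prod_{i\in S_j}\xi_\eps}$ by Wick's theorem, with no pairings inside a single Wick product. This turns $\Eb\big(\prod_j\widehat{\Pc_{m_j}}(\alpha_j,\beta_j)\big)$ into a finite sum, with $O_{B,r}(1)$ terms, of Feynman integrals. Each integral has $\lambda_\eps^{\sum m_j}$ in front, Green's-function edges (some replaced by the differences $G(x_r-x_{r+1})-G(x_\ell-x_{r+1})$), and $\eta_\eps$ pairing edges. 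The difference factors encode the BPHZ subtraction of every fully paired subinterval.

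\textbf{Step 2: power counting.} For each diagram I introduce a Hepp tree on its vertices (Definition \ref{def.hepp} below, or the classical version), with dyadic scales $N_\nf\in[\eps,1]$. On each Hepp sector I bound the integrand by a power product of the scales. The two sources of scale gain are: the difference factors give $N_{\text{inner}}/N_{\text{outer}}$ by Taylor expansion at fully paired subintervals, which are exactly the divergent subgraphs; and the fixed Fourier modes $\alpha_j,\beta_j$ give decay at the external legs. Summing over $(N_\nf)$, the diagram is then bounded by $\lambda_\eps^{\sum m_j}|\log\eps|^{L}$. Here $L$ is the number of ``unsafe'' nested groups that are pure bubble (two-point, marginal) subgraphs not already subtracted by the renormalization. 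Because $\lambda_\eps^2=\lambda^2/|\log\eps|$, the diagram is negligible unless $L$ saturates $(\sum m_j-r)/2$.

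\textbf{Step 3: which diagrams contribute.} The combinatorial claim I need is that saturation happens only when the cross pairings among the $r$ factors split into pairs of factors. Within each pair, the diagram must be obtained from the trivial one-edge cross diagram by successively inserting bubbles, i.e.\ every Hepp subtree forms a single block, as in the discussion of $W=\Bc\backslash\{\rf\}$ in Section \ref{sec.idea_3}. Any connected cluster of three or more factors, or any non-nested crossing, loses at least one logarithm. This gives (\ref{eq.setup_22}). For (\ref{eq.setup_23}), at a contributing diagram each nested bubble at scale $N$ contributes asymptotically $\lambda_\eps^2\int_{N\leq|z|\leq 2N}G(z)^2\,\mathrm{d}z$. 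Since $G(z)\approx(4\pi^2|z|^2)^{-1}$ and $|S^3|=2\pi^2$, this equals $\lambda_\eps^2\cdot\frac{\log 2}{8\pi^2}$ per dyadic scale. Summing over the ordered nested scales yields $\lambda_\eps^2\Xf(m_1,m_2)\widehat H$, where $\Xf$ counts nested bubble configurations weighted by the volume of the scale simplex. Errors from the cutoff $\rho$, from the smooth parts of $G$, and from the $\langle\alpha\rangle$-dependence occur only at $O(1)$ scales and so cost one logarithm, giving the right-hand side of (\ref{eq.setup_23}). The bound $|\Xf|\leq C(C\lambda)^{m_1+m_2-2}$ holds because there are exponentially many such bubble configurations.

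\textbf{Step 4: the identity (\ref{eq.setup_23.5}).} For $m=m_1+m_2$ odd, no full pairing exists, so the sum is $0$. For $m$ even, summing over all splittings $m_1+m_2=m$ recombines the two sides into a single chain carrying $(m-2)/2$ nested bubbles distributed between the two sides. This sum is the $(m-2)/2$-th term of a geometric series with ratio $\lambda^2/(2\pi^2)$, consistent with (\ref{eq.cov}). I would check this with a generating-function computation over ordered scales $t_1<\dots<t_k$ in $[0,1]$ in the logarithmic variable $t=\log N/\log\eps$.

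\textbf{Main obstacle.} I expect the hardest step to be showing that the renormalization subtractions exactly cancel all bubble insertions sitting on a single side, i.e.\ fully paired intervals. Only the cross bubbles should survive, and one must verify that the resulting constant $\Xf$ is independent of $\rho$. I would handle this through the explicit difference structure in Proposition \ref{prop.renorm2}.
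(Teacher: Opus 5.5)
Your top-level route is the paper's. The paper does not reprove Proposition \ref{prop.main_2}; it only notes that, since every constant may depend on $B,r,\alpha_j,\beta_j$, the statement is the term-by-term content of \cite{GR26} (Lemmas 3.3, 3.4 and Propositions 3.9, 3.12 there). The trouble is in the reconstruction of that analysis that you sketch, which misidentifies the surviving diagrams.

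If ``cross bubbles'' means bubbles whose two $G$-lines lie on different sides, then ``only the cross bubbles survive'' (with one-sided bubbles equated to fully paired intervals) is false. In $\Pc_3$ the pairing $\{1,3\}$ with $2$ single is \emph{not} a fully paired subinterval, so it is not renormalized. Contracting $\xi_\eps(x_2)$ with the noise of $\Pc_1$ and using $x_3\approx x_1$ gives
\[\lambda_\eps^4\int_{\eps\lesssim|u|\lesssim 1}G(u)^2\,\mathrm{d}u\cdot\widehat H\approx\lambda_\eps^2\,\tfrac{\lambda^2}{8\pi^2}\,\widehat H .\]
So $\Xf(3,1)=\Xf(1,3)=\lambda^2/(8\pi^2)$, while the ladder and crossed contractions give $\Xf(2,2)=\lambda^2/(4\pi^2)$. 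Only the total $\lambda^2/(2\pi^2)$ agrees with (\ref{eq.setup_23.5}) at $m=4$; your rule would give $\lambda^2/(4\pi^2)$, i.e.\ the wrong covariance. Consequently your Step 4 is not a derivation: the ratio $\lambda^2/(2\pi^2)$ is read off from (\ref{eq.cov}). The combinatorics that actually produces it (including these vertex-type one-sided bubbles around cross-contracted noises, with their scale-ordering weights) is exactly the part of \cite{GR26} the paper relies on, and your proposal does not supply it.

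The treatment of fully paired intervals has a related gap. The subtraction does not cancel them exactly: the deterministic part of $\widehat{\Pc_2}(\alpha,-\alpha)$ is $-\tfrac{\pi^2}{2}\lambda_\eps^2|\alpha|^2\langle\alpha\rangle^{-4}+o(\lambda_\eps^2)$. The subtraction only makes them lose one logarithm relative to a genuine bubble, and the Hepp-sector power counting of your Step 2 cannot see this.
\begin{itemize}
\item With absolute values, the first-order Taylor term of $G(x_r-x_{r+1})-G(x_\ell-x_{r+1})$ is not small; it vanishes only after integration, by evenness of $J$.
\item The second-order term is bounded by $|u|^2|z-y|^{-4}$, whose $z$-integral produces $\log(1/\eps)$. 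Combined with $\int|J||u|^2=O(\lambda_\eps^2)$, this is of size $\lambda^2$, the same as a contributing bubble.
\end{itemize}
This is why Section \ref{sec.4.1} only obtains $|\widetilde G(z)|\le (C\lambda)^{2p_1}|z|^{-2}$, which suffices for Proposition \ref{prop.main_1} but not for identifying the limit. You need an extra cancellation. One form: $\nabla^2 G$ has mean zero on spheres, since $|x|^{-2}$ is harmonic in $\Rb^4$. Another: in Fourier variables $|\widehat J(k)-\widehat J(0)|\le\tfrac{|k|^2}{2}\int|J(u)||u|^2\,\mathrm{d}u$, so the renormalized insertion is $O(\lambda_\eps^2)\,\widehat G(k)$ with no logarithm.

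Finally, your claim that any cluster of three or more factors loses a full logarithm fails for odd clusters. For generic modes, $\Eb(\widehat{\Pc_1}\widehat{\Pc_1}\widehat{\Pc_2})$ is of exact order $\lambda_\eps^4$, coming both from the tree contraction and from the deterministic part of $\widehat{\Pc_2}$ above. So for odd $r$ your counting yields only an $O(\lambda_\eps^{r+1})$ error in (\ref{eq.setup_22}), not $\lambda_\eps^r/|\log\eps|$. That weaker bound is still all the proof of Theorem \ref{thm.main} uses.
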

\begin{proof}[Proof of Theorem \ref{thm.main} assuming Propositions \ref{prop.main_1}--\ref{prop.main_2}] We now prove Theorem \ref{thm.main}, under the assumptions of Propositions \ref{prop.main_1}--\ref{prop.main_2}. That is, we need to prove
\begin{equation}\label{eq.setup_24}\Hc_\eps(x,y):=\lambda_\eps^{-1}\cdot (G_\eps(x,y)-G(x,y))\xrightarrow{\mathrm{law}}\Hc(x,y)\end{equation} in the sense of convergence in law for random fields. The proof below is divided into 3 steps.

\textbf{Step 1.} First recall the operators $\Pc_\eps$ and $\Rc_{\eps},\Rc_{\eps}'$ (and associated kernels) as in (\ref{eq.setup_14}) and (\ref{eq.setup_18}). We will measure them in $L^2\to L^2$ norm. As smoothed out white noise, we have \[
\Eb\|\xi_\eps\|_{L^2\to L^2}^2=\Eb\|\xi_\eps\|_{L^\infty}^2\lesssim \eps^{-5}.\] On the other hand, by Plancherel we have
\begin{equation}\label{eq.setup_24.5}\|K\|_{L^2\to L^2}^2\sim \|\widehat{K}(\alpha,\beta)\|_{\ell^2\to\ell^2}^2\lesssim \eps^{-8}\sum_{\alpha,\beta}|\widehat{K}(\alpha,\beta)|^2\cdot \langle \eps^2(\alpha+\beta)\rangle^{6}.
\end{equation} Combining this with (\ref{eq.setup_20}) (using the $\eps^{-8}\langle \alpha\rangle^{-4}\langle \beta\rangle^{-4}\langle \eps^2(\alpha+\beta)\rangle^{-8}$ upper bound), we get 
\begin{equation}\label{eq.setup_24.8}\Eb\|\Pc_m\|_{L^2\to L^2}^2\leq (C\lambda)^{2m}\eps^{-20},\quad \forall 1\leq m\leq A.
\end{equation} Note that $A\sim |\log\eps|$, so $\lambda^m\ll\eps^{100}$ for any $m\geq A/10$, as $\lambda$ is chosen sufficiently small.

Putting these together and using (\ref{eq.setup_19}), it is easy to see that
\begin{equation}\label{eq.setup_24.9}\Eb\|\Pc_\eps\|_{L^2\to L^2}\leq\eps^{-12},\qquad \Eb\|\Rc_\eps\|_{L^2\to L^2}+\Eb\|\Rc_\eps'\|_{L^2\to L^2}\leq\eps^{30}.
\end{equation} By Chebyshev, after excluding a set $Z_\eps$ of probability $\Pb(Z_\eps)\leq \eps^{2}$, we have \[\|\Pc_\eps\|_{L^2\to L^2}\leq \eps^{-14},\quad\|\Rc_\eps\|_{L^2\to L^2}+\|\Rc_\eps'\|_{L^2\to L^2}\leq\eps^{28}.\] Then, outside of $Z_\eps$, the inverse $\Lc_\eps^{-1}=\Pc_\eps(1+\Rc_\eps)^{-1}$ will exist in $\Bc(L^2,L^2)$, and satisfies that \begin{equation}\label{eq.setup_24new}\|\Lc_\eps^{-1}-\Pc_\eps\|_{L^2\to L^2}\leq\|\Pc_\eps\|_{L^2\to L^2}\cdot \|\Rc_\eps\|_{L^2\to L^2}\cdot\|(1+\Rc_\eps)^{-1}\|_{L^2\to L^2} \leq\varepsilon^{12}.\end{equation} 

\textbf{Step 2.} Since $\Pb(Z_\eps)\to 0$, in order to prove convergenece in law for $\Hc_\eps$, it suffices to prove the same for $\Hc_\eps\cdot\mathbf{1}_{Z_\eps^c}$. Note that $\Pc_0=G$; define $\Qc_\eps:=\lambda_\eps^{-1}\cdot(\Pc_\eps-\Pc_0)=\lambda_\eps^{-1}\sum_{1\leq m\leq A}\Pc_m$, then by (\ref{eq.setup_24new}) we have that $\|\Hc_\eps\cdot\mathbf{1}_{Z_\eps^c}-\Qc_\eps\cdot\mathbf{1}_{Z_\eps^c}\|_{L^2\to L^2}\leq\eps^{10}$. Using (\ref{eq.setup_20}) and summing over $m$, it is easy to see that $\Eb|\widehat{\Qc_\eps}(\alpha,\beta)|$ is uniformly bounded in $\eps$ for each fixed $(\alpha,\beta)$. This implies that the set of laws of distribution of the random fields $\Qc_\eps\cdot\mathbf{1}_{Z_\eps^c}$ is tight, and the same for $\Hc_\eps\cdot\mathbf{1}_{Z_\eps^c}$ (as their distance belongs to a fixed compact set in $\Sc'(\Tb^4)$). By Prokhorov, we then only need to prove convergence in law for any finitely many Fourier modes of $\Hc_\eps\cdot\mathbf{1}_{Z_\eps^c}$; that is, for any fixed $\alpha_j,\beta_j\in\Zb^4$ and $c_j\in\Rb$ for $1\leq j\leq s$, we only need to show
\begin{equation}\label{eq.setup_25}\Eb(e^{i\Xs(\Hc_\eps\cdot\mathbf{1}_{Z_\eps^c})})\to\Eb(e^{i\Xs(\Hc)}),\qquad \Xs(F):=\sum_{j=1}^s c_j\widehat{F}(\alpha_j,\beta_j).
\end{equation} Since $\|\Hc_\eps-\Qc_\eps\|_{L^2\to L^2}\to 0$ in the support of $\mathbf{1}_{Z_\eps^c}$ (and $|\Xs(F)|\lesssim\|F\|_{L^2\to L^2}$), and also $\Pb(Z_\eps)\to 0$, we only need to prove (\ref{eq.setup_25}) for $e^{i\Xs(\Qc_\eps)}$ instead of $e^{i\Xs(\Hc_\eps\cdot\mathbf{1}_{Z_\eps^c})}$.

\textbf{Step 3.} For each large constant $B$ (independent of $\eps$), define $\Qc_{B,\eps}:=\lambda_\eps^{-1}\sum_{1\leq m\leq B}\Pc_{m}$ (actually these $\Pc_m$ also depend on $\eps$, but this does not matter in the proof below). By using (\ref{eq.setup_20}) as above, we can estimate 
\begin{equation}\label{eq.setup_26}
\begin{aligned}
\Eb\big|(e^{i\Xs(\Qc_\eps)}-e^{i\Xs(\Qc_{B,\eps})})\big|&\leq \Eb|\Xs(\Qc_{\eps})-\Xs(\Qc_{B,\eps})|\\&\leq \sum_{j=1}^s|c_j|\cdot\Eb\big|\widehat{\Qc_{\eps}}(\alpha_j,\beta_j)-\widehat{\Qc_{B,\eps}}(\alpha_j,\beta_j)\big|
\leq O_{c_j,\alpha_j,\beta_j}(1)\cdot(C\lambda)^{B}.
\end{aligned}
\end{equation} 

Moreover, for fixed $B$, define $\Hc_B$ to be a centered Gaussian random field with covariant function 
\[\Eb(\Hc_B(x_1,y_1)\Hc_B(x_2,y_2))=\Xf_B\cdot H(x_1,y_2,x_2,y_2);\qquad \Xf_B:=\sum_{m_1,m_2\leq B}\Xf(m_1,m_2).\] By using (\ref{eq.setup_22})--(\ref{eq.setup_23}), we get that
\begin{equation}\label{eq.setup_27}\Eb((\Xs(\Qc_{B,\eps}))^2)\to\Xf_B\cdot\sum_{i,j=1}^s c_ic_j\widehat{H}(\alpha_i,\beta_i,\alpha_j,\beta_j)=\Eb((\Xs(\Hc_B))^2)\qquad (\eps\to 0)
\end{equation} (which in particular implies that $\Xf_B\geq 0$), as well as 
\begin{equation}\label{eq.setup_28}\Eb((\Xs(\Qc_{B,\eps}))^r)\to \mathbf{1}_{\textrm{$r$ is even}}\cdot(r-1)!!\cdot(\Eb((\Xs(\Hc_B))^2))^{r/2}=\Eb((\Xs(\Hc_B))^r)
\end{equation} for each fixed $r$. For each fixed $B$, as $\Xs(\Hc_B)$ is a Gaussian random variable, we know that $\Xs(\Qc_{B,\eps})\to\Xs(\Hc_B)$ in law as $\eps\to 0$. This implies that
\begin{equation}\label{eq.setup_29}\big|\Eb(e^{i\Xs(\Qc_{B,\eps})})-\Eb(e^{i\Xs(\Hc_B)})\big|\leq \Rs_B(\eps)\end{equation}
with $\lim_{\eps\to 0}\Rs_B(\eps)=0$ for fixed $B$. 

On the other hand, by (\ref{eq.setup_23.5}) we have $\Xf_B\to \Xf:=2\pi^2/(2\pi^2-\lambda^2)$ as $B\to\infty$, which implies that $\big|\Eb(e^{i\Xs(\Hc_B)})-\Eb(e^{i\Xs(\Hc)})\big|\leq \Rs(B)$ with $\lim_{B\to\infty}\Rs(B)=0$. Putting together, we get
\begin{equation}\label{eq.setup_30}\big|\Eb(e^{i\Xs(\Qc_\eps)})-\Eb(e^{i\Xs(\Hc)})\big|\leq O_{c_j,\alpha_j,\beta_j}(1)\cdot(C\lambda)^{B}+\Rs_B(\eps)+\Rs(B).
\end{equation} By taking $\varepsilon\to 0$ first and then $B\to\infty$, this proves (\ref{eq.setup_25}) for $\Qc_\eps$. Therefore, Theorem \ref{thm.main} is proved.
\end{proof}
\subsection{Proof of Proposition \ref{prop.main_2}}\label{sec.bb} It turns out that, Proposition \ref{prop.main_2} directly follows from the proofs in \cite{GR26}. In fact, as \cite{GR26} treats the term-by-term convergence, all the estimates in \cite{GR26} are allowed to have constants of arbitrary size depending on the order of expansion. In the case of Proposition \ref{prop.main_2}, the situation is the same in view of the $O_{B,r,\alpha_j,\beta_j}(1)$ and $O_{B,\alpha_j,\beta_j}(1)$ factors on the right hand side of (\ref{eq.setup_22})--(\ref{eq.setup_23}) (with $e^{i\sum_j(\alpha_j\cdot x_j+\beta_j\cdot y_j)}$ playing the role of the test function $\varphi$ in \cite{GR26}). With this point, we then see that (\ref{eq.setup_22})--(\ref{eq.setup_23}) are consequences of Lemma 3.3, Lemma 3.4, Propositions 3.9 and 3.12 in \cite{GR26}.
\section{Reduction to primitive pairing estimates}\label{sec.primi} In the rest of this paper we will prove Proposition \ref{prop.main_1}. In this section we reduce this estimate to one for primitive pairings (Definition \ref{def.interval}), stated in Proposition \ref{prop.primi_1} below.

Recall $G=G(x-y)$ is the kernel of $(1-\Delta)^{-1}$, which has the following properties for $z\in\Tb^4$:
\begin{equation}\label{eq.primi_0}G\in\Ec,\quad|\nabla^k G(z)|\lesssim |z|^{-2-k},\quad \nabla^k\bigg(G(z)-\frac{1}{4\pi^2|z|^2}\bigg)\lesssim |z|^{-1-k}.
\end{equation}
\begin{prop}\label{prop.primi_1} Fix $1\leq n\lesssim|\log\eps|$, and functions $G_j(z)\in\Ec$ satisfying $|G_j(z)|\leq |z|^{-2}$ for $1\leq j\leq 2n-1$. Consider the following expression
\begin{equation}\label{eq.primi_1}\Jc_{2n,\mathrm{prim}}(z-w):=\lambda_\eps^{2n}\sum_{\kappa}\int_{(\Tb^4)^{2n-2}}\prod_{j=1}^{2n-1}G_j(x_j-x_{j+1})\prod_{\{i,j\}\in\kappa}\eta_\eps(x_i-x_j)\,\mathrm{d}x_2\cdots\mathrm{d}x_{2n-1},
\end{equation} where $x_1:=z$ and $x_{2n}:=w$, and $\kappa$ runs over all \emph{primitive} full pairings of $[1,2n]$. Then the function $\Jc_{2n,\mathrm{prim}}(z)\in\Ec$, and satisfies the following estimate
\begin{equation}\label{eq.primi_2}|\Jc_{2n,\mathrm{prim}}(z)|\leq(C\lambda)^{2n}\bigg(\frac{\eps^{-4}}{|\log\eps|}\cdot|z|^{-2}\mathbf{1}_{|z|\lesssim\eps}+\frac{1}{|\log\eps|^2}\cdot(|z|^2+\eps^2)^{-3}\bigg).
\end{equation}
Moreover, if we change $\Jc_{2n,\mathrm{prim}}$ into $\widetilde{\Jc}_{2n,\mathrm{prim}}$ by inserting an additional factor $\eps^2+\max_{i,j}|x_i-x_j|^2$ in the integrand in (\ref{eq.primi_1}), then we have
\begin{equation}\label{eq.primi_2.5}|\widetilde{\Jc}_{2n,\mathrm{prim}}(z)|\leq(C\lambda)^{2n}\bigg(\frac{\eps^{-2}}{|\log\eps|}\cdot|z|^{-2}\mathbf{1}_{|z|\lesssim\eps}+\frac{1}{|\log\eps|^2}\cdot(|z|^2+\eps^2)^{-2}\bigg).
\end{equation}
\end{prop}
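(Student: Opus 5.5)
The first claim, $\Jc_{2n,\mathrm{prim}}\in\Ec$, is soft. I would check it directly: the change of variables $x_j\mapsto -x_j$, or a simultaneous permutation of the coordinate axes, preserves every $G_j$ and $\eta_\eps$ (all lie in $\Ec$) and preserves the set of primitive pairings. The substance is \eqref{eq.primi_2}--\eqref{eq.primi_2.5}, and for these I plan to put absolute values on every factor and bound $|G_j(a)|\leq |a|^{-2}$. Since $\eta\geq 0$ is supported at scale $\eps$, I replace $|a|^{-2}$ by $(\eps^2+|a|^2)^{-1}$ up to harmless local modifications; the region where two consecutive points are $\ll\eps$ apart is integrable in dimension $4$ and costs at most $C^n$. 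Each pair $\{i,j\}\in\kappa$ forces $|x_i-x_j|\lesssim\eps$. I therefore group the $2n$ points into $n$ ``pair clusters'' with representative points $y_1,\dots,y_n$, where $z$ and $w$ belong to the clusters of their partners. Integrating the second point of each pair against $\eta_\eps$ costs $O(1)$ and leaves the chain kernel with each $y_k$ visited twice. The result is an expression of the form \eqref{eq.idea_6}: a sum over words of length $2n$ in the $n$ letters, each letter used twice, with $\kappa$ primitive exactly when no proper subword is closed under the pairing.

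Next I decompose the configuration $(y_1,\dots,y_n)$ by Hepp trees (Definitions \ref{def.hepp}, \ref{def.hepp_3} and Lemma \ref{lem.hepp}), with the root scale comparable to $\max(\eps,|z-w|)$. For a fixed tree $\Tc$ I use the volume estimate of Proposition \ref{prop.hepp_3}. The sum over pairings, now viewed as permutations of two copies of each leaf as in Observation 3, is controlled by the integrand estimate of Proposition \ref{prop.main_3} together with its inductive form, Propositions \ref{prop.main2_2} and \ref{prop.main_4}. These give a bound of the shape $(C\lambda)^{2n}|\log\eps|^{-n}(n-|W|)!\,|\log\eps|^{|W|+1}$, summed over index sets $W$.

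The key point, and the step I expect to be the main obstacle, is to exploit primitivity inside this bound. If a non-root node $\nf$ belonged to $W$ in the two-copy sense, both copies of every leaf of $\Tc_\nf$ would form one consecutive block. That block would be a fully paired proper subinterval, which primitivity forbids. So I intend to show that primitivity forces either $W=\varnothing$, or at least that the root contributes no additional logarithm. This should save the extra factors $|\log\eps|^{-1}$ and $|\log\eps|^{-2}$ visible in \eqref{eq.primi_2}, with the factorial $n!\leq C^n|\log\eps|^n$ absorbed since $n\lesssim|\log\eps|$. Making this rigorous requires reopening the induction in Proposition \ref{prop.main2_2} so that the primitivity constraint propagates through each merging step; I would first try to encode it as the condition that the skipped set $O$ in Proposition \ref{prop.main_4} is nonempty at every node.

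Finally, I would do the scale bookkeeping in two regimes. If $|z|\lesssim\eps$, all scales may collapse to $\eps$. Counting $\eps^{-4}$ from the $\eta$ factors and $|z|^{-2}$ from the direct $G$-link between the clusters of $z$ and $w$, the remaining integrals contribute $\eps^{-4}$ times one surviving logarithm, which produces the first term of \eqref{eq.primi_2}. If $|z|\gg\eps$, the root scale is $|z|$. Dimensional analysis of the $2n-1$ kernels against $2n-2$ integrations and $n$ delta-like constraints gives $|z|^{-6}$, and primitivity yields the $|\log\eps|^{-2}$. For \eqref{eq.primi_2.5}, the inserted factor $\eps^2+\max|x_i-x_j|^2$ is comparable to the squared root scale of the Hepp tree. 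It therefore multiplies each dyadic contribution by $N_\rf^2$, turning $\eps^{-4}$ into $\eps^{-2}$ and $(|z|^2+\eps^2)^{-3}$ into $(|z|^2+\eps^2)^{-2}$, and the same argument applies verbatim.
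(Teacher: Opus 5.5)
\textbf{There is a genuine gap in the step you identify as the main obstacle.} Your pipeline is the paper's up to the bound $\sum_W (n-|W|)!\,|\log\eps|^{|W|+1}$ in (\ref{eq.vol_4}): lattice reduction, Hepp trees, Proposition \ref{prop.hepp_3}, then Proposition \ref{prop.main_3}. The step meant to extract $|\log\eps|^{-2}$ from this fails.

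\emph{Primitivity does not force $W=\varnothing$.} The nodes primitivity excludes, those whose copies form a single block, are not what $W$ records. Encoding primitivity as $s=|O|\geq 1$ is already done in the induction of Proposition \ref{prop.main2_2}, so no reopening is needed. In that induction, $\nf$ enters $W'$ exactly when $s=1$, i.e.\ when the copies of $\Lc_\nf$ form two blocks, and primitivity allows this.

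Concretely, take $n=3$ and the primitive pairing $\{1,4\},\{2,6\},\{3,5\}$, i.e.\ the word $abcacb$. Here $a=y_1$ and $b=y_6$ are fixed at distance $L$, and the free point $c$ lies at distance $N\ll L$ from $b$. The summand is $\sim L^{-6}N^{-4}$ on $\sim N^4$ choices of $c$, so summing over dyadic $N\leq L$ gives a genuine $\log L$. This is the term where $W$ contains the node joining $b$ and $c$; the $W=\varnothing$ term of (\ref{eq.vol_3}) alone is too small by the factor $N/L$.

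Your factorial bookkeeping also does not close even on its own terms. With $W=\varnothing$, the bound $n!\leq C^n|\log\eps|^n$ leaves $(C\lambda)^{2n}|\log\eps|^{+1}$. You need the sharper $k!\leq k^3k^{k-3}\leq C^n|\log\eps|^{k-3}$ for $3\leq k\leq n$.

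\textbf{The two logarithms come from counting, not from a structural consequence of primitivity.}

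(i) The scale sum produces $|\log\eps|^{|W|+1}$ with $|W|+1\leq|\Bc|\leq r-1\leq n-1$. This holds because every lattice point occurs an even number of times, so there are $r\leq n$ leaves, and a tree with $r$ leaves has at most $r-1$ branching nodes. Combined with the factorial bound above, this already handles every $W$ with $|W|\leq n-3$.

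(ii) In the single extremal case $|W|=n-2$ (forcing $r=n$, all $m_\lf=2$, and $W=\Bc\backslash\{\rf\}$), primitivity enters only through $\kappa(1)\neq 2n$. This gives $y_1\neq y_{2n}$, so the leaves $\ff_0\neq\ff_1$ carrying the fixed endpoints are distinct. Then $N_{\ff_0\vee\ff_1}$ is pinned to within a factor $O(n^2)$ of $\langle y_1-y_{2n}\rangle$, by Definition \ref{def.hepp_3} and \textbf{Step 4} of Section \ref{sec.proof_hepp_3}. Its dyadic sum therefore costs $O(\log n)$ instead of $|\log\eps|$. Your fallback, that the root contributes no logarithm, is at best this pinning, and you give no mechanism for it.

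\textbf{Root scale.} Your assumption that the root scale is comparable to $\max(\eps,|z-w|)$ is false, since the intermediate points may be much farther apart. The paper lets $N_\rf$ range freely. It bounds the inserted factor by $O(n^4)N_\rf^2$, which cancels the $N_\rf^{-2}$ in (\ref{eq.vol_3}), and obtains the decay $\langle y_1-y_{2n}\rangle^{-4}$ from the two-point volume bound (\ref{eq.vol_2.1}). This proves (\ref{eq.proof_1}) and hence (\ref{eq.primi_2.5}). Then (\ref{eq.primi_2}) follows by dividing by $\eps^2+|z-w|^2\leq\eps^2+\max_{i,j}|x_i-x_j|^2$.

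\textbf{The $|z|\lesssim\eps$ regime.} For $n\geq 2$ there is no direct $G$-link between $z$ and $w$, so your derivation of the $|z|^{-2}\mathbf{1}_{|z|\lesssim\eps}$ term is wrong. That term is only needed for $n=1$, where it comes from the explicit formula $\Jc_{2,\mathrm{prim}}(z)=\lambda_\eps^2G_1(z)\eta_\eps(z)$.
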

Now we apply Proposition \ref{prop.primi_1} to finish the proof of Propositions \ref{prop.main_1}. In all arguments in the rest of this section, we will assume Proposition \ref{prop.primi_1}, in particular (\ref{eq.primi_2}) and (\ref{eq.primi_2.5}), to be true.

\subsection{Proof of (\ref{eq.setup_19})}\label{sec.4.1} 
We start by proving (\ref{eq.setup_19}). Recall $\Cc_{2q}=\sum_{\sigma}\Cc_{2q,\sigma}$, where $\sigma$ is as in (\ref{eq.setup_8}), and $\Cc_{2q,\sigma}$ is defined in (\ref{eq.setup_10}) with $\Jc_{2q,\sigma}$ given by (\ref{eq.setup_11.5}), which we copy here: 
\begin{multline}\label{eq.primi_3}
\Jc_{2q,\sigma}(z-w)=\lambda_{\eps}^{2q}\int_{(\Tb^4)^{2q-2}}\prod_{\substack{j=1\\j\not\in\{r_1,\cdots,r_s\}}}^{2q-1}G(x_j-x_{j+1})\prod_{i=1}^s\big(G(x_{r_i}-x_{r_i+1})-G(x_{\ell_i}-x_{r_i+1})\big)\\\times\prod_{\{i,j\}\in\sigma}\eta_\eps(x_i-x_j)\,\mathrm{d}x_2\cdots\mathrm{d}x_{2q-1}.
\end{multline}

\textbf{Step 1.} To start, we first locate the endpoints $(\ell_i,r_i)$ of the fully paired subintervals in (\ref{eq.primi_3}). First, fixing $s$ and the set $\cup_i\{\ell_i,r_i\}$ leads to at most $\sum_s\binom{2q}{2s}\leq 2^{2q}$ choices.
Once this is fixed, note that all the subintervals $[\ell_i,r_i]$ have a nested structure (any two of them are either disjoint, or one contains the other), so if we put a left bracket ``\{" at each $\ell_i$ and right bracket ``\}" at each $r_i$, then they form a \emph{Dyck word} (i.e. a word in which left and right brackets are fully paired), which has $\frac{1}{s+1}\binom{2s}{s}\leq 2^{2q}$ choices. 

Once this Dyck word is also fixed, then the set $\{\{\ell_i,r_i\}_{1\leq i\leq s}\}$ is also fixed. By the symmetry of (\ref{eq.primi_3}), we may fix $\{\ell_i,r_i\}$ for each $1\leq i\leq s$, by each time choosing the smallest and leftmost subinterval. For each $i$, define $r_i-\ell_i+1:=2p_i$, and 
\[I_i:=[\ell_i,r_i],\qquad \widetilde{I}_i:=I_i\backslash\cup\{I_j:I_j\subsetneq I_i\},\qquad \widetilde{I}_0:=[1,2q]\backslash(\cup_i I_i).\] Note that each $\widetilde{I}_i$ is a linearly ordered set, and $\{1,2q\}\subseteq\widetilde{I}_0$. Then, choosing a full pairing $\sigma$ in (\ref{eq.primi_3}) with all the $(\ell_i,r_i)$ fixed, is equivalent to choosing a primitive pairing $\kappa_i$ for each $\widetilde{I}_i$ (including $i=0$; beware that these $\kappa_i$ have different meanings from those in Section \ref{sec.setup_3}). As such, with all the $(\ell_i,r_i)$ fixed, we can rewrite (\ref{eq.primi_3}) as follows:
\begin{multline}\label{eq.primi_4}\sum_{\sigma:\,(\ell_i,r_i)\,\textrm{fixed}}\Jc_{2q,\sigma}(z-w)=\lambda_\eps^{2q}\sum_{(\kappa_0,\cdots,\kappa_s)}\int_{(\Tb^4)^{2q-2}}\prod_{j\in\widetilde{I}_0\backslash\{1,2q\}}\,\mathrm{d}x_j\cdot\prod_{i=1}^s\prod_{j\in \widetilde{I}_i}\,\mathrm{d}x_j\\
\times \prod_{\substack{j=1\\j\not\in\{r_1,\cdots,r_s\}}}^{2q-1}G(x_j-x_{j+1})\prod_{i=1}^s\big(G(x_{r_i}-x_{r_i+1})-G(x_{\ell_i}-x_{r_i+1})\big)\prod_{\{i,j\}\in\sigma}\eta_\eps(x_i-x_j),
\end{multline} here note that $\sigma=\kappa_0\cup\kappa_1\cup\cdots\cup\kappa_s$.

\textbf{Step 2}. Now, to estimate (\ref{eq.primi_4}), we proceed in the increasing order of $i$ (but with $i=0$ left to the end), and each time we sum over $\kappa_i$ and integrate in the variables $x_j$ for $j\in \widetilde{I}_i$ (or for $j\in \widetilde{I}_0\backslash\{1,2q\}$ when $i=0$). By our ordering of the intervals, it is easy to check that any factor $G(x_j-x_{j+1})$ where $j<\ell_i-1$ or $j>r_i$, and any factor $G(x_{r_{i'}}-x_{r_{i'}+1})-G(x_{\ell_{i'}}-x_{r_{i'}+1})$ where $i'>i$, \emph{does not involve any variable $x_j$ for $j\in I_i$}. So, if we start with $i=1$, then the part of expression involving the integrated variables $x_j\,(j\in \widetilde{I}_1=I_1)$ will be exactly
\begin{multline}\label{eq.primi_5}\widetilde{G}(x_{\ell_1-1}-x_{r_1+1}):=\lambda_\eps^{2p_1}\sum_{\kappa_1}\int_{(\Tb^4)^{2p_1}}\prod_{j=\ell_1-1}^{r_1-1}G(x_j-x_{j+1})\cdot\big(G(x_{r_1}-x_{r_1+1})-G(x_{\ell_1}-x_{r_1+1})\big)\\\times\prod_{\{i,j\}\in\kappa_1}\eta_\eps(x_i-x_j)\,\mathrm{d}x_{\ell_1}\cdots \mathrm{d}x_{r_1},
\end{multline} where $\kappa_1$ runs over all primitive pairings of $[\ell_1,r_1]$. By comparing (\ref{eq.primi_5}) with (\ref{eq.primi_1}), we see that
\begin{equation}\label{eq.primi_6}
\widetilde{G}(x-y)=\int_{(\Tb^4)^2} G(x-z)\cdot\Jc_{2p_1,\mathrm{prim}}(z-w)\cdot\big(G(z-y)-G(w-y)\big)\,\mathrm{d}z\mathrm{d}w,
\end{equation} where $(x,y,z,w):=(x_{\ell_1-1},x_{r_1+1},x_{\ell_1},x_{r_1})$ and $\Jc_{2p_1,\mathrm{prim}}$ is as in (\ref{eq.primi_1}) with all $G_j$ replaced by $G$. By Proposition \ref{prop.primi_1} we know that $\Jc_{2p_1,\mathrm{prim}}\in\Ec$, and so does $\widetilde{G}$ by (\ref{eq.primi_6}).

Next we find an upper bound for $\widetilde{G}$. There are three cases:
\begin{enumerate}
\item If $|z-w|\ll|z-y|\sim|w-y|$, then by Taylor expansion and (\ref{eq.primi_0}) we have
\begin{equation}\label{eq.primi_7}G(z-y)-G(w-y)=-(z-w)\cdot\nabla G(z-y)+O(1)|z-w|^2\cdot |z-y|^{-4}.\end{equation} As $\Jc=\Jc_{2p_1,\mathrm{prim}}\in\Ec$ (thus is even), we have $\int_{\Tb^4}\Jc(z-w)\cdot (z-w)\,\mathrm{d}w=0$ for each $z$, thus the contribution of the first term in (\ref{eq.primi_7}) vanishes. Plugging in the second term, and using (\ref{eq.primi_2}), we then get 
\begin{multline}\label{eq.primi_8}
|\widetilde{G}(x-y)|\lesssim (C\lambda)^{2p_1}\int_{|z-w|\lesssim|z-y|\sim|w-y|}|x-z|^{-2}|z-y|^{-4}\cdot|z-w|^2\\\times\bigg(\frac{\eps^{-4}}{|\log\eps|}\cdot|z-w|^{-2}\mathbf{1}_{|z-w|\lesssim\eps}+\frac{1}{|\log\eps|^2}\cdot(|z-w|^2+\eps^2)^{-3}\bigg)\,\mathrm{d}z\mathrm{d}w.
\end{multline} By first integrating in $w$ and then in $z$, and using the denominators $1/|\log\eps|$ and $1/|\log\eps|^2$ to cancel possible log losses, we can verify that $|\widetilde{G}(x-y)|\leq (C\lambda)^{2p_1} |x-y|^{-2}$.
\item If $|z-y|\lesssim|z-w|\sim|w-y|$, then we have
\begin{multline}\label{eq.primi_9}
|\widetilde{G}(x-y)|\lesssim (C\lambda)^{2p_1}\int_{|z-y|\lesssim|z-w|\sim|w-y|}|x-z|^{-2}|z-y|^{-2}\\\times\bigg(\frac{\eps^{-4}}{|\log\eps|}\cdot|z-w|^{-2}\mathbf{1}_{|z-w|\lesssim\eps}+\frac{1}{|\log\eps|^2}\cdot(|z-w|^2+\eps^2)^{-3}\bigg)\,\mathrm{d}z\mathrm{d}w.
\end{multline} Again, by integrating in $w$ and then in $z$, and exploiting the denominators, we get that $|\widetilde{G}(x-y)|\leq (C\lambda)^{2p_1}|x-y|^{-2}$.
\item If $|w-y|\lesssim|z-w|\sim|z-y|$, then we have
\begin{multline}\label{eq.primi_10}
|\widetilde{G}(x-y)|\lesssim (C\lambda)^{2p_1}\int_{|z-y|\lesssim|z-w|\sim|w-y|}|x-z|^{-2}|w-y|^{-2}\\\times\bigg(\frac{\eps^{-4}}{|\log\eps|}\cdot|z-w|^{-2}\mathbf{1}_{|z-w|\lesssim\eps}+\frac{1}{|\log\eps|^2}\cdot(|z-w|^2+\eps^2)^{-3}\bigg)\,\mathrm{d}z\mathrm{d}w.
\end{multline} Using that $|z-w|\sim|z-y|$, we can replace the $|z-w|$ in the second line of (\ref{eq.primi_10}) by $|z-y|$; then we first integrate in $w$ and then in $z$, and exploit the denominators, to see that $|\widetilde{G}(x-y)|\leq (C\lambda)^{2p_1} |x-y|^{-2}$.
\end{enumerate}

In summary, in any case we have proved $|\widetilde{G}(z)|\leq (C\lambda)^{2p_1}|z|^{-2}$.

\textbf{Step 3.} Now, we apply the expression of $\widetilde{G}$ from (\ref{eq.primi_5}) to simplify (\ref{eq.primi_4}). Analytically, this gets rid of the summation in $\kappa_1$ (i.e. the primitive pairing in $I_1=\widetilde{I}_1$) and the integration in the corresponding $x_j$ variables. Combinatorially, this amounts to removing the fully paired subinterval $I_1=[\ell_1,r_1]$. After this, the remaining indices (in $[1,\ell_1-1]\cup[r_1+1,2q]$) are still linearly ordered with a full pairing $\sigma':=\sigma\backslash\kappa_1=\kappa_0\cup\kappa_2\cup\cdots\cup\kappa_s$, and the corresponding fully paired subintervals are just $I_i\backslash I_1\,(i\geq 2)$, which we view as the new $I_i$. The definition of $\widetilde{I}_i$ and $\widetilde{I}_0$ remain the same, and now $I_2$ is the new smallest and leftmost interval, so $\widetilde{I}_2=I_2$.

Now let $H=(C\lambda)^{-2p_1}\cdot\widetilde{G}$, also recall $\sigma'=\kappa_0\cup\kappa_2\cup\cdots\cup\kappa_s$, then the above discussions lead to
\begin{multline}\label{eq.primi_11}\sum_{\sigma:\,(\ell_i,r_i)\,\textrm{fixed}}\Jc_{2q,\sigma}(z-w)=(C\lambda)^{2p_1}\lambda_\eps^{2(q-p_1)}\sum_{(\kappa_0,\kappa_2,\cdots,\kappa_s)}\int_{(\Tb^4)^{2(q-p_1)-2}}\prod_{j\in\widetilde{I}_0\backslash\{1,2q\}}\,\mathrm{d}x_j\cdot\prod_{i=2}^s\prod_{j\in \widetilde{I}_i}\,\mathrm{d}x_j\\
\times \prod_{\substack{j=1,\,j\not\in[\ell_1-1,r_1]\\j\not\in\{r_2,\cdots,r_s\}}}^{2q-1}G(x_j-x_{j+1})\cdot H(x_{\ell_1-1}-x_{r_1+1})\cdot\prod_{i=2}^s\big(G(x_{r_i}-x_{r_i+1})-G(x_{\ell_i}-x_{r_i+1})\big)\prod_{\{i,j\}\in\sigma'}\eta_\eps(x_i-x_j),
\end{multline}

The expression (\ref{eq.primi_11}) has the same form as the right hand side of (\ref{eq.primi_4}), except that we do not have the summation and integration associated with $I_1$, and that the function $G(x_{\ell_1-1}-x_{r_1+1})$ is replaced by $H(x_{\ell_1-1}-x_{r_1+1})$. But we also have $H\in\Ec$ and $|H(z)|\leq |z|^{-2}$ up to an absolute constant, and Proposition \ref{prop.primi_1} depends only on these two properties, so next we can sum and integrate in $\kappa_2$ and $x_j\,(j\in \widetilde{I}_2=I_2)$, extract an expression of form (\ref{eq.primi_5}), estimate it using Proposition \ref{prop.primi_1}, and reduce to the same expression (\ref{eq.primi_11}) but without $I_2$, and so on.

\textbf{Step 4.} At the end of reduction, we have removed all the $I_i\,(1\leq i\leq s)$, and now we have $I_0=\widetilde{I}_0$ which is a linearly ordered set (generalized interval) of $2p$ elements, with a primitive pairing $\kappa_0$. This gives that
\begin{equation}\label{eq.primi_12}\sum_{\sigma:\,(\ell_i,r_i)\,\textrm{fixed}}\Jc_{2q,\sigma}(z-w)=(C\lambda)^{2(q-p)}\lambda_\eps^{2p}\sum_{\kappa_0}\int_{(\Tb^4)^{2p-2}} \prod_{j\in I_0\backslash\{2q\}}G_j(x_j-x_{j+1})\prod_{\{i,j\}\in\kappa_0}\eta_\eps(x_i-x_j)\prod_{j\in I_0\backslash\{1,2q\}}\,\mathrm{d}x_i.
\end{equation} Here note that $\{r_1,\cdots,r_s\}$ have been removed (and the associated $G(\cdot)-G(\cdot)$ factors integrated out) in previous steps; moreover each $G_j$ is either $G$ (if no fully paired subinterval is inserted between $j$ and $j+1$) or one of the $H$ functions coming from the reduction steps in \textbf{Steps 2--3}. In either case they belong to $\Ec$ and are bounded by $|H(z)|\leq |z|^{-2}$ up to an absolute constant, so by using (\ref{eq.primi_2}) one last time, we get
\begin{equation}\label{eq.primi_13}
\bigg|\sum_{\sigma:\,(\ell_i,r_i)\,\textrm{fixed}}\Jc_{2q,\sigma}(z)\bigg|\leq (C\lambda)^{2q}\bigg(\frac{\eps^{-4}}{|\log\eps|}\cdot|z|^{-2}\mathbf{1}_{|z|\lesssim\eps}+\frac{1}{|\log\eps|^2}\cdot(|z|^2+\eps^2)^{-3}\bigg).
\end{equation} By integrating this, and then summing over all $(\ell_i,r_i)$ (which provides at most $O(C^q)$ factor, see \textbf{Step 1}), we finish the proof of (\ref{eq.setup_19}).

\subsection{Proof of (\ref{eq.setup_20})}\label{sec.proof.3.24} There are two things to prove: when we take $1$ on the right hand side of (\ref{eq.setup_20}), or when we take $\eps^{-8}\langle \alpha\rangle^{-4}\langle \beta\rangle^{-4}\langle \eps^2(\alpha+\beta)\rangle^{-8}$. We shall prove the former in \textbf{Steps 1--3} below, and the latter, which only requires minor changes, in \textbf{Step 4}.

Start with the following expression, which follows from (\ref{eq.setup_7.5}), (\ref{eq.setup_14}) and (\ref{eq.ft}):
\begin{multline}\label{eq.Pm-hat}
\widehat{\Pc_{m}}(\alpha,\beta)
=\lambda_\eps^m\sum_\kappa\int_{(\Tb^4)^{m+2}}e^{i(\alpha\cdot x+\beta\cdot y)}\prod_{\substack{j=0\\j\not\in\{r_1,\cdots,r_s\}}}^mG(x_j-x_{j+1})\\
\times\prod_{i=1}^s\big(G(x_{r_i}-x_{r_i+1})-G(x_{\ell_i}-x_{r_i+1})\big)\prod_{\{i,j\}\in\kappa}\eta_\eps(x_i-x_j)
\Wick{\prod_{i\in S}\xi_\eps(x_i)}\,\mathrm{d}x_1\cdots\mathrm{d}x_m\mathrm{d}x\mathrm{d}y.
\end{multline} Here in (\ref{eq.Pm-hat}) we denote $(x_0,x_{m+1}):=(x,y)$, and $\kappa$ runs over all partial pairings of $[1,m]$, and $S$ is the set of single indices under $\kappa$.

\textbf{Step 1.} First consider the terms where $\kappa$ in (\ref{eq.Pm-hat}) is a full pairing (so $S=\varnothing$), which are deterministic terms. For each full pairing $\kappa$, consider the intervals $[\ell_i,r_i]$, and the one among them that is maximal (with respect to inclusion) and rightmost; since $\kappa$ is a full pairing, this must have form $I_*=[a,m]$ for some $a=a(\kappa)\in [1,m-1]$ (of course we may fix $a$ below).

As in Section~\ref{sec.4.1} above, we can reduce the integral (\ref{eq.Pm-hat}) by successively removing the fully paired subintervals $[\ell_i,r_i]$ to the left of $I_*$, and then those inside $I_*$, until we are only left with $I_*$ itself. Each such reduction, using Proposition \ref{prop.primi_1}, gains a power of $C\lambda$ and replaces the contribution of $[\ell_i,r_i]$ by a new input function $H$ satisfying $|H(z)| \lesssim |z|^{-2}$, see (\ref{eq.primi_11}). After these reductions, we are left with the new interval $I_*\backslash\{I_j:I_j\subsetneq I_*\}$, (which we assume has $2p$ elements) and a primitive full pairing $\kappa_*$. So we can write
\begin{equation}\label{eq.4.2after-red}
\begin{aligned}
(\textrm{This part of $\widehat{\Pc_m}(\alpha,\beta)$})&=\sum_{\kappa_*} (C\lambda)^{m-2p} \lambda_\eps^{2p} \int_{(\Tb^4)^{2p+2}}e^{i(\alpha\cdot x+\beta\cdot y)}
G_0 (x-x_a) \prod_{j=1}^{2p-1} G_j(u_j-u_{j+1}) 
\\
&\times\prod_{\{i,j\}\in \kappa_*} \eta_\eps(u_i-u_j)\cdot\big(
G(x_m-y) - G(x_a-y)
\big)\,\mathrm{d}u_1\cdots\mathrm{d}u_{2p}\mathrm{d}x \mathrm{d}y
\\
&=(C\lambda)^{m-2p}
\int_{(\Tb^4)^{4}}e^{i(\alpha\cdot x+\beta\cdot y)}
G_0 (x-x_a) \Jc_{2p,\mathrm{prim}}(x_a-x_m)\\&\times
\big(
G(x_m-y) - G(x_a-y)
\big)\, \mathrm{d}x_a\mathrm{d}x_m\mathrm{d}x \mathrm{d}y
\end{aligned}
\end{equation}
with $(u_1,u_{2p}):=(x_a,x_m)$ (see the notations of Proposition \ref{prop.primi_1}), where $\kappa_*$ runs over primitive pairings, and $|G_j(z)|\lesssim|z|^{-2}$.
Now the integral in $y$ produces
\[
\int_{\Tb^4} e^{i\beta\cdot y}
\bigl(G(x_m-y)-G(x_a-y)\bigr)\,dy
=
\langle \beta\rangle^{-2}\cdot e^{i\beta\cdot x_a}
\big(e^{i\beta\cdot (x_m-x_a)}-1\big),
\]
noticing that $\widehat{G}(\beta)=\langle\beta\rangle^{-2}$ up to constants. If we take the imaginary part $\sin(\beta\cdot (x_m-x_a))$ above, then the contribution to the whole integral vanishes because $\Jc_{2p,\mathrm{prim}}\in \Ec$, so we are left with the contribution of the real part, which can be bounded by
\[
\langle \beta\rangle^{-2} \cdot|\cos(\beta\cdot(x_m-x_a))-1|
\le 
\langle \beta\rangle^{-2} \cdot |\beta|^2 |x_m-x_a|^2
\le
\eps^2+\max_{i,j\in I_*}|x_i-x_j|^2.
\]
With this factor, we can effectively replace $\Jc_{2p,\mathrm{prim}}$ by $\widetilde{\Jc}_{2p,\mathrm{prim}}$, see Proposition \ref{prop.primi_1}; now  \eqref{eq.primi_2.5} implies
\[
|\widetilde{\Jc}_{2p,\mathrm{prim}}|\leq(C\lambda)^{2p} \bigg(\frac{\eps^{-2}}{|\log\eps|}\cdot|z|^{-2}\mathbf{1}_{|z|\lesssim\eps}+\frac{1}{|\log\eps|^2}\cdot(|z|^2+\eps^2)^{-2}\bigg),
\qquad z=x_m-x_a.
\] Using this bound, and that $|G_0(z)|\lesssim |z|^{-2}$, we can bound $\widehat{\Pc_m}(\alpha,\beta)$ by first integrating $\widetilde{\Jc}_{2p,\mathrm{prim}}$ in $x_m$, then integrating $G_0$ in $x$, and finally integrating in $x_a$. THis leads to $|\widehat{\Pc_m}(\alpha,\beta)|\leq (C\lambda)^m|\log\eps|^{-1}$, hence (\ref{eq.setup_20}).

\textbf{Step 2.} Now suppose $\kappa$ is not a full pairing in (\ref{eq.Pm-hat}). To compute the second moment, we sum over pairings $\kappa_+$ of $[1,m]$ and $\kappa_-$ of $[m+1,2m]$ (say with sets of single indices being $S\pm$), and also sum over bijections $\pi:S_+\to S_-$, due to Wick's theorem, i.e.
\[
\Eb\Big(
\Wick{\prod_{i\in S_{+}}\xi_\eps(x_i)}\;
\Wick{\prod_{j\in S_{-}}\xi_\eps( x_j)}\Big)
= \sum_{\pi:S_{+} \to S_{-}}\prod_{i\in S_{+}}\eta_\eps(x_i- x_{\pi(i)}).
\]

Note that summing over $(\kappa_+,\kappa_-,\pi)$ is equivalent to summing over full pairings $\kappa'$ of $[1,2m]$, where $\kappa'=\kappa_+\cup\kappa_-\cup(\cup_i\{i,\pi(i)\}).$ For each $\kappa'$, we may identify all the fully paired subintervals \emph{within $[1,m]$ or within $[m+1,2m]$} as in Definition \ref{def.renormterm} and Proposition \ref{prop.renorm2}, and list them as $I_i:=[\ell_i,r_i]$; define $\widetilde{I}_i$ as before, and $\widetilde{I}_0:=[1,2m]\backslash(\cup_i I_i)$. Arguing similarly to \textbf{Step 1} in Section \ref{sec.4.1}, we may fix the positions of these subintervals at $O(C^m)$ cost. Then, similar to (\ref{eq.setup_7.5}), we can write (upon fixing these subintervals):
\begin{align}
(\textrm{This part of }\Eb|\widehat{\Pc_m}(\alpha,\beta)|^2)&=\lambda_\eps^{2m}
\sum_{\kappa_0,\cdots ,\kappa_{s}}
\int_{(\Tb^4)^{2m+4}}e^{i \alpha\cdot (x-z)+i\beta\cdot (y-w)}
G(x-x_1)G(x_m-y)G(z-x_{m+1})
\nonumber\\&\times 
\prod_{\substack{j=1\\ j\not\in\{m,r_1,\cdots,r_{s}\}}}^{2m-1}
G(x_j-x_{j+1})\prod_{i=1}^{s+t}\big(G(x_{r_i}-x_{r_i+1})-G(x_{\ell_i}-x_{r_i+1})\big)
\nonumber\\
&\times G(x_{2m}-w)\prod_{\{i,j\}\in\kappa}\eta_\eps(x_i-x_j)\cdot
\prod_{j\in \widetilde{I}_0}\mathrm{d}x_j\cdot
\prod_{i=1}^{s+t} 
\prod_{j\in \widetilde{I}_i}\mathrm{d}x_j\cdot
\mathrm{d}x\mathrm{d}y\mathrm{d}z\mathrm{d}w
\label{eq.3.24_new}
\end{align}
where $\kappa_i$ is a pairing of $\widetilde{I}_i$ (primitive for $i\geq 1$) and $\kappa'=\kappa_0\cup\cdots\cup\kappa_{s}$ (if $m$ or $2m$ is involved in a fully paired subinterval, then the factors $G(x_m-y)$ or $G(x_{2m}-w)$ should be replaced by corresponding $G(\cdot)-G(\cdot)$ differences, but this does not affect the arguments below).

Now we argue as in Section \ref{sec.4.1} to successively sum over the primitive pairings $\kappa_i\,(i\geq 1)$ and remove each $I_i$, gaining powers of $C\lambda$ and introducing new input functions $H(z)$ satisfying $|H(z)|\lesssim|z|^{-2}$; in the end we are left with $\widetilde{I}_0$ and the pairing $\kappa_0$ (which may \emph{not} be primitive, see below). By renaming the variables, we can refer to the remaining variables coming from $\{x_1,\cdots,x_m\}$ as $\{x_1,\cdots,x_p\}$, and those coming from $\{x_{m+1},\cdots,x_{2m}\}$ as $\{x_{p+1},\cdots,x_{p+q}\}$. Now we can take absolute values, using the upper bounds for the new input functions $H$, and bound
\begin{multline}\label{eq.3.24_new2}
(\textrm{This part of }\Eb|\widehat{\Pc_m}(\alpha,\beta)|^2)\leq(C\lambda)^{2m-p-q}\lambda_\eps^{p+q}
\sum_{\kappa_0} \int_{\Tb^{4(p+q+4)}}
 \prod_{\substack{j=1\\j\neq p}}^{p+q-1}|x_j-x_{j+1}|^{-2} \prod_{\{i,j\}\in \kappa_0}\eta_\eps(x_i-x_j)\\\times|x-x_1|^{-2}\cdot |x_p-y|^{-2}\cdot  |z-x_{p+1}|^{-2} \cdot |x_{p+q}-w|^{-2}\,\mathrm{d}x_1\cdots\mathrm{d}x_{p+q}\cdot \mathrm{d}x\mathrm{d}y\mathrm{d}z\mathrm{d}w.
\end{multline}

\textbf{Step 3.} In (\ref{eq.3.24_new2}) we first integrate in $(x,y,z,w)$ yielding constants, leading to
\begin{equation}\label{eq.missing-p}
\textrm{LHS of (\ref{eq.3.24_new2})}\leq(C\lambda)^{2m-p-q}\lambda_\eps^{p+q}
\sum_{\kappa_0} \int_{\Tb^{4(p+q)}}   \prod_{j=1}^{p+q-1} |x_j-x_{j+1}|^{-2} \cdot |x_p-x_{p+1}|^2
\prod_{\{i,j\}\in \kappa_0}\eta_\eps(x_i-x_j)
 \prod_{j=1}^{p+q}\mathrm{d}x_j.
\end{equation}
Here, the full pairing $\kappa_0$ of $[1,p+q]$ may have fully paired proper subintervals; however, by our construction, the partial pairing that $\kappa_0$ induces on $[1,p]$ and on $[p+1,p+q]$ must both be primitive and not full. This means that, all fully paired subintervals of $\kappa$ \emph{must} contain $\{p,p+1\}$, so these intervals must have the form $[a_i,b_i]$ where
\[
1\le a_t < a_{t-1} <\cdots <a_1 \le p < p+1 \le b_1 <b_2 <\cdots<b_t \le p+q.
\]
Now we perform successive reductions for these subintervals. Define $k_1:=b_1-a_1+1$. Note that the full pairing $\kappa_1$ of $[a_1,b_1]$ induced by $\kappa_0$ is primitive, so
by \eqref{eq.primi_2.5} in Proposition~\ref{prop.primi_1}, the expression
\[
\lambda_\eps^{k_1}\sum_{\kappa_1}\int\prod_{j=a_1}^{b_1-1}|x_j-x_{j+1}|^{-2}\cdot|x_p-x_{p+1}|^2
\prod_{\{i,j\}\in \kappa_1}\eta_\eps(x_i-x_j)
\,\mathrm{d}x_{a_1+1}\cdots \mathrm{d}x_{b_1-1},
\]
which is just $\widetilde{\Jc}_{k_1,\mathrm{prim}}(x_{a_1}-x_{b_1})$, is bounded by 
\[
(C\lambda)^{k_1}\bigg(\frac{\eps^{-2}}{|\log\eps|}\cdot|z|^{-2}\mathbf{1}_{|z|\lesssim\eps}+\frac{1}{|\log\eps|^2}\cdot(|z|^2+\eps^2)^{-2}\bigg),
\qquad z=x_{a_1}-x_{b_1}.
\]
It is then elementary to show that
\[
\int_{\Tb^{8}} |x_{a_1-1} - x_{a_1}|^{-2} \cdot\bigg(\frac{\eps^{-2}}{|\log\eps|}\cdot|z|^{-2}\mathbf{1}_{|z|\lesssim\eps}+\frac{1}{|\log\eps|^2}\cdot(|z|^2+\eps^2)^{-2}\bigg)\cdot
|x_{b_1} - x_{b_1+1}|^{-2}
\mathrm{d}x_{a_1} \mathrm{d}x_{b_1} \le C
\]
with $z:=x_{a_1}-x_{b_1}$, so we can bound \eqref{eq.missing-p} by $(C\lambda)^{2m-p-q+k_1}\lambda_\eps^{p+q-k_1}$ times
\[
\sum_{ \kappa_0'} \int  
\Big(\prod_{j=1}^{a_1-2} |x_j-x_{j+1}|^{-2} \Big)
\Big(\prod_{j=b_1+1}^{p+q-1} |x_j-x_{j+1}|^{-2} \Big)
\prod_{\{i,j\}\in\kappa_0'}\eta_\eps(x_i-x_j)
 \prod_{j=1}^{a_1-1}\mathrm{d}x_j \prod_{j=b_1+1}^{p+q}\mathrm{d}x_j
\]
where $\kappa_0':=\kappa_0\backslash\kappa_1$, which precisely reassembles the structure of \eqref{eq.missing-p}.
Then we reduce the subinterval $[a_2,b_2]$, etc. Eventually, we obtain
from $\kappa_0$ a primitive pairing $\kappa_t$ of $[1,a_t-1]\cup [b_t+1,p+q]$, then we can apply \eqref{eq.primi_2.5} in Proposition~\ref{prop.primi_1} one last time, to bound $\Eb|\widehat{\Pc_m}(\alpha,\beta)|^2$ by the integral of the right hand side of \eqref{eq.primi_2.5}, which is bounded by $(C\lambda)^{2m} |\log\eps|^{-1} \le \lambda_\eps^2 \cdot C(C\lambda)^{2m-2}$. This proves (\ref{eq.setup_20}).

\textbf{Step 4.} Finally, we consider (\ref{eq.setup_20}) when we take $\eps^{-8}\langle \alpha\rangle^{-4}\langle \beta\rangle^{-4}\langle \eps^2(\alpha+\beta)\rangle^{-8}$ on the right hand side. First we treat the decay factor in $\alpha+\beta$; note that $\Pc_m$ is the renormalized version of a linear operator which is composition of at most $|\log\eps|$ factors, each being either convolution by $G$ (which is a multiplication on Fourier side) or multiplication by $\xi_\eps$ (which is convolution by $\widehat{\rho_\eps}\cdot\widehat{\xi}$ on Fourier side). If this operator shifts the frequency by a distance of $|\alpha+\beta|\sim L\geq \eps^{-2}$, then one of the factors must shift the frequency by $\geq \eps^{1/2}L$, so for this factor we can replace $\xi_\eps$ by $\widetilde{\xi}_\eps$ which is the projection of $\xi_\eps$ to frequencies $\gtrsim \eps^{1/2}L$ and carries a decay factor of $\langle \eps^2L\rangle^{-8}$. In this way we get the decay factor $\langle \eps^2(\alpha+\beta)\rangle^{-8}$.

Now to deal with the decay factors in $\alpha$ and $\beta$, we exploit the oscillation in $(x,y,z,w)$ in (\ref{eq.3.24_new}). Consider for example the integral in $y$. If $m$ is not involved in a fully paired subinterval, then we can integrate in $y$ after removing each $I_i$ in (\ref{eq.3.24_new}) and before taking absolute value, to pick up a factor of $|\widehat{G}(\beta)|\leq\langle \beta\rangle^{-2}$. If $m$ is involved in a fully paired subinterval, say $[a,m]$ as in \textbf{Step 1}, then the same calculations there lead to the factor \[\langle \beta\rangle^{-2} \cdot|\cos(\beta\cdot(x_m-x_a))-1|.\] We now bound it by $2\langle \beta\rangle^{-2}$ to keep the decay in $\beta$, although this gives up a factor $|x_m-x_a|^2$ which forces us to consider $\Jc_{2p,\mathrm{prim}}$ instead of $\widetilde{\Jc}_{2p,\mathrm{prim}}$; upon integration this leads to a loss of $\eps^{-2}$. By doing the same for each one of $(x,y,z,w)$, we can secure the desired $\langle \alpha\rangle^{-4}\langle \beta\rangle^{-4}$ factor at the price of losing $\eps^{-8}$. This fnishes the proof of (\ref{eq.setup_20}).
\section{Proof of primitive pairing estimates}\label{sec.proof}
In this section we prove Proposition \ref{prop.primi_1}. The fact that $\Jc_{2n,\mathrm{prim}}\in\Ec$ easily follows because each $G_j\in\Ec$. Now we prove (\ref{eq.primi_2})--(\ref{eq.primi_2.5}). First, when $n=1$, by (\ref{eq.primi_1}) we have \[\Jc_{2,\mathrm{prim}}(z)=\lambda^2|\log\eps|^{-1}\cdot\eps^{-4}G(z)\cdot\eta(z/\eps),\] which matches the first term on the right hand side of (\ref{eq.primi_2}); same for $\widetilde{\Jc}_{2,\mathrm{prim}}$ and (\ref{eq.primi_2.5}). Below we assume $n\geq 2$; note also that 
\[\eps^2+\max_{i,j}|x_i-x_j|^2\geq \eps^2+|x_1-x_{2n}|^2=\eps^2+|z-w|^2,\] so to prove (\ref{eq.primi_2})--(\ref{eq.primi_2.5}), it suffices to prove that
\begin{multline}\label{eq.proof_1}
\sum_\kappa\int_{(\Tb^4)^{2n-2}}\big(\eps^2+\max_{i,j}|x_i-x_j|^2\big)\cdot\prod_{j=1}^{2n-1}|x_j-x_{j+1}|^{-2}\prod_{\{i,j\}\in\kappa}\eta_\eps(x_i-x_j)\,\mathrm{d}x_2\cdots\mathrm{d}x_{2n-1}\\\leq C^n|\log\eps|^{n-2}\cdot (|z-w|^2+\eps^2)^{-2}.
\end{multline} \subsection{First reductions}\label{sec.reduce} In the rest of this section we will prove (\ref{eq.proof_1}). In this subsection we first perform some reductions. First, define $\Ac:=\{\min(i,j):\{i,j\}\in\kappa\}$, then $\Ac$ is an $n$-element subset of $[1,2n]$, which has at most $\binom{2n}{n}\leq 4^n$ choices. In view of the $C^n$ factor on the right hand side of (\ref{eq.proof_1}), in the rest of the proof we may fix the choice of $\Ac$. This type of arguments (applied to any quantity/object that has $O(C^n)$ choices) will be frequently used below.

Once $\Ac$ is fixed, $\kappa$ is then equivalent to a bijection between $\Ac$ and $\Ac^c:=[1,2n]\backslash \Ac$, which we still denote by $\kappa$. Next, for each fixed $\kappa$, we decompose the domain of integration in (\ref{eq.proof_1}) (where each $x_j\in\Tb^4=[-\pi,\pi]^4$), by fixing $y_j:=\lfloor\eps^{-1}x_j\rfloor$ for each $j\in\Ac$, where $\lfloor z\rfloor$ is defined by taking integer part (floor function) of each coordinate of $z$. By definition we then have \[y_j\in\Zb_M^4,\qquad \Zb_M:=\Zb\cap[-M,M],\] where $M\sim\eps^{-1}$ is a fixed dyadic number (i.e. $M\in 2^\Nb$). Moreover for $j\in\Ac$ we have $|x_j-\eps y_j|\lesssim\eps$; in view of the factor $\eta_\eps(x_i-x_j)$ in (\ref{eq.proof_1}) and the compact support of $\eta$, we also have $|x_{\kappa(j)}-\eps y_j|\lesssim\eps$ for $j\in \Ac$. As such, we naturally define $y_{\kappa(j)}=y_j$ for $j\in \Ac$. This leds to
\begin{equation}\label{eq.reduce_1}
\textrm{LHS of }(\ref{eq.proof_1})\leq C^n\sup_\Ac\sum_\kappa\sum_{(y_j:j\in\Ac)}\eps^2\max_{i,j}\langle y_i-y_j\rangle^2\int_{|x_j-\eps y_j|\lesssim\eps}\eps^{-4n}\prod_{j=1}^{2n-1}|x_j-x_{j+1}|^{-2}\,\mathrm{d}x_2\cdots\mathrm{d}x_{2n-1}.
\end{equation} Next we claim that the integral in (\ref{eq.reduce_1}) is bounded by
\begin{equation}\label{eq.reduce_2}
\int_{|x_j-\eps y_j|\lesssim\eps}\prod_{j=1}^{2n-1}|x_j-x_{j+1}|^{-2}\,\mathrm{d}x_2\cdots\mathrm{d}x_{2n-1}\leq C^n\eps^{4n-6}\prod_{j=1}^{2n-1}\langle y_j-y_{j+1}\rangle^{-2}.
\end{equation} In fact, if $|y_j-y_{j+1}|\lesssim 1$ for each $j$, then $|x_j-x_{j+1}|\lesssim\eps$ for each $j$, so by using this support of $|x_j-x_{j+1}|$ and rescaling, we can then bound
\begin{equation}\label{eq.reduce_3} \textrm{LHS of }(\ref{eq.reduce_2})\leq \eps^{4n-6}\cdot\big\|(|x|^{-2}\mathbf{1}_{|x|\lesssim1})^{\oast (2n-1)}\big\|_{L^\infty}\leq C^n\eps^{4n-6}
\end{equation} which implies (\ref{eq.reduce_2}), where $\oast(2n-1)$ means $(2n-1)$ fold convolution; the proof of (\ref{eq.reduce_3}) follows from the elementary facts that \[2n-1\geq 3,\quad (|x|^{-2}\mathbf{1}_{|x|\lesssim1})^{\oast3}\in L^\infty,\quad \textrm{and}\quad\|f *(|x|^{-2}\mathbf{1}_{|x|\lesssim1})\|_{L^\infty}\lesssim\|f\|_{L^\infty}.\] On the other hand, if $|y_i-y_{i+1}|\gg 1$ for some $i$, then for this $i$ we have $|x_i-x_{i+1}|\sim \eps\langle y_i-y_{i+1}\rangle$ pointwise; moreover, for each $1\leq j<i$ we have
\[\int_{|x_{j+1}-\eps y_{j+1}|\lesssim\eps}|x_j-x_{j+1}|^{-2}\,\mathrm{d}x_{j+1}\lesssim\eps^2\langle y_j-y_{j+1}\rangle^{-2}\] uniformly in $x_j$ under the assumption $|x_j-\eps y_j|\lesssim\eps$, and the same is true for $i<j\leq 2n-1$ but with the roles of $j$ and $j+1$ switched, so (\ref{eq.reduce_2}) follows by integrating in the order $x_{i+1}\to x_{i+2}\cdots\to x_{2n-1}\to x_i\to x_{i-1}\cdots\to x_2$.

Now, inserting (\ref{eq.reduce_2}) to (\ref{eq.reduce_1}), we get
\begin{equation}\label{eq.reduce_4}
\textrm{LHS of }(\ref{eq.proof_1})\leq C^n\eps^{-4}\sup_\Ac\sum_\kappa\sum_{(y_j)}\max_{i,j}\langle y_i-y_j\rangle^2\prod_{j=1}^{2n-1}\langle y_j-y_{j+1}\rangle^{-2}.
\end{equation} It is important to quantify the summation in (\ref{eq.reduce_4}): with $\Ac$ and $\Ac^c=[1,2n]\backslash\Ac$ fixed, this summation is taken over all bijections $\kappa:\Ac\to \Ac^c$, and all sequences $(y_1,\cdots,y_{2n})\in(\Zb_M^4)^{2n}$, which satisfy $y_j=y_{\kappa(j)}$ for all $j\in \Ac$. Moreover, since $z=x_1$ and $w=x_{2n}$ are fixed, the values of $y_1$ and $y_{2n}$ are also fixed in this summation. Note also that $\kappa(1)\neq 2n$ (because $n\geq 2$ and $\kappa$ is primitive).

\subsection{The Hepp tree} To estimate the summation in (\ref{eq.reduce_4}), it is important to classify the scales defined by the distances $|y_i-y_j|$. For this purpose, we introduce the notion of \emph{Hepp trees} below.
\begin{definition}
\label{def.hepp} Define a \emph{Hepp tree} $\Tc$ to be a rooted tree where each branching node has at least two children. Define a \emph{marking} of $\Tc$ to be a set of dyadic numbers $N_\nf\in 2^{\Nb}$ for each branching node $\nf$ such that $N_{\nf^+}>N_\nf>1$ where $\nf^+$ is the parent of $\nf$. Also, define a choice of \emph{multiplicities} of $\Tc$ to be a set of positive integers $m_\lf\geq 2$ for each \emph{leaf} $\lf$.
\end{definition}
\begin{lemma}\label{lem.hepp_2} The number of (unmarked) Hepp trees of $r$ leaves is bounded by $C^r$.
\end{lemma}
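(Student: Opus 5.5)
We reduce the count to a count of plane binary trees together with leaf labellings of bounded cost. The key observation is that a Hepp tree in Definition \ref{def.hepp} with $r$ leaves has at most $r-1$ branching nodes, because each branching node has at least two children. Hence the total number of vertices is at most $2r-1$. We treat the trees as unlabelled rooted trees, with children unordered and the leaves indistinguishable. This is the reading consistent with the phrase ``unmarked'' and with the $C^r$ bound, since labelled leaves would force at least $r!$ choices. If leaf labels were intended, the paper presumably accounts for them separately through the automorphism factors mentioned in the introduction.

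\textbf{Step 1 (embed into plane trees).} Every unordered rooted tree admits at least one planar embedding, obtained by choosing an order of the children at each node. The map sending a plane tree to its underlying unordered tree is therefore surjective. So it suffices to bound the number of rooted plane trees with at most $2r-1$ vertices.

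\textbf{Step 2 (encode plane trees).} Each rooted plane tree with $k$ vertices is encoded by its depth-first contour, i.e.\ a Dyck word of length $2(k-1)$. Writing ``down'' when we first enter a child and ``up'' when we return from it recovers the tree uniquely. So there are at most $\frac{1}{k}\binom{2k-2}{k-1}\le 4^{k}$ such trees. Summing over $k\le 2r-1$ gives at most $(2r-1)\cdot 4^{2r-1}\le C^r$ trees.

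\textbf{Step 3 (constraint).} The condition that every branching node has at least two children only restricts the class further, so the bound persists.

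There is essentially no obstacle here. The only point requiring care is the bound $k\le 2r-1$. We prove it by induction, or by counting edges: $k-1=\sum_{\nf}(\#\text{children of }\nf)\ge 2(k-r)$, which gives $k\le 2r-1$.
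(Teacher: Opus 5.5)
Your proof is correct, and it takes a different route from the paper's.

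The paper writes the recursion $A_r=\sum_{q\ge 2}\sum_{r_1+\cdots+r_q=r}A_{r_1}\cdots A_{r_q}$ and solves $f=x+f^2/(1-f)$ in closed form as $f(x)=\frac{x+1-\sqrt{x^2-6x+1}}{4}$. It then reads off $A_r\lesssim 6^r$ from analyticity in $|x|<3-2\sqrt2$. Because the sum runs over ordered compositions $(r_1,\dots,r_q)$, this actually counts plane trees without unary nodes (the little Schr\"oder numbers). The ``at least two children'' constraint is encoded exactly, which gives the essentially sharp rate $(3+2\sqrt2)^r$.

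You use that constraint only once, through the edge count giving $k\le 2r-1$. You then drop it and bound by all plane trees on at most $2r-1$ vertices via Dyck words. This costs a worse constant (about $16^r$), but it is entirely elementary. It also avoids the passage from analyticity to coefficient bounds, which the paper leaves implicit. Since only some $C^r$ is needed downstream, the constant does not matter.

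One point to add: state your conclusion for plane trees, not only for unordered ones. In Step 3 of the proof of Proposition \ref{prop.hepp_3}, the lemma is applied to the \emph{ordered} trees $\Tc'$ obtained by listing children according to $\prec_\nf$. The injection there only makes sense for plane trees. Your Step 2 already bounds exactly that larger class, just as the paper's recursion does.
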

\begin{proof} We are just counting the number of (unlabeled) rooted trees of $r$ leaves, such that each branching node has at least two children.  Let this number be $A_r$, then $A_1=1$, and 
\[A_r=\sum_{q=2}^\infty\sum_{r_1+\cdots +r_q=r}A_{r_1}\cdots A_{r_q}\quad(r\geq 2).\] Considering the generating function $f(x)=\sum_r A_rx^r$, we get
\[f(x)=x+\sum_{q=2}^\infty f(x)^q=x+\frac{f(x)^2}{1-f(x)}\Rightarrow f(x)=\frac{x+1-\sqrt{x^2-6x+1}}{4}\] which is analytic in $|x|<3-2\sqrt{2}$, it then follows $A_r\lesssim 6^r$.
\end{proof}
\begin{definition}\label{def.hepp_2} Let $\Tc$ be a Hepp tree (Definition \ref{def.hepp}), possibly with a marking $(N_\nf)$ and multiplicities $(m_\lf)$. We define the following notations:
\begin{enumerate}[{(1)}]
\item Recall $\nf^+$ is the parent of $\nf$. Define $\Cc_\nf$ to be the set of children of $\nf$, and $\Tc_\nf$ to be the subtree rooted at $\nf$ (with $\Tc_\nf=\{\nf\}$ when $\nf$ is a leaf), and write $\mf\leq\nf$ if $\mf\in \Tc_\nf$. Define $\Bc$ to be the set of branching nodes, $\Lc$ the set of leaves, and $\rf$ the root. We also write $\Lc_\nf=\Lc\cap \Tc_\nf$ and $\Bc_\nf=\Bc\cap\Tc_\nf$ etc. 
\item For $\nf\in\Bc$, define $\gamma_\nf=|\Cc_\nf|$ to be the number of children of $\nf$ (leaves are counted \emph{without} multiplicity; later we also need to treat leaves \emph{with multiplicity}, which will be clearly indicated below).
\item Finally, for $\mf,\mf'\in\Tc$, define $\mf\vee\mf'$ to be the lowest common ancestor of $\mf$ and $\mf'$. Note that $\mf\vee\mf'\in\Lc$ if and only if $\mf=\mf'\in\Lc$; otherwise $\mf\vee\mf'\in\Bc$.
\end{enumerate}

We usually denote $r$ to be the total number of leaves of $\Tc$ without multiplicity; it is easy to see that $\sum_{\nf\in\Bc}(\gamma_\nf-1)=r-1$. We may also denote by $m=\sum_{\lf\in\Lc}m_\lf$ the total number of leaves of $\Tc$ with multiplicity (each leaf $\lf$ counted $m_\lf$ times).
\end{definition}
\begin{definition}\label{def.hepp_3} Let $\Tc$ be a Hepp tree with marking $(N_\nf)$. A mapping $(z_\lf):\Lc\to\Zb_M^4$ is called \emph{admissible}, if it is injective (i.e. $z_\lf$ are different for different $\lf$), and satisfies that
\begin{enumerate}[{(a)}]
\item $|z_\lf-z_{\lf'}|\geq \frac{1}{2}N_{\lf\vee\lf'}$ for $\lf,\lf'\in\Lc$ with $\lf\neq\lf'$;
\item For any $\nf\in\Bc$, all its children $\nf_1,\cdots,\nf_s$ can be connected by some links between them, such that for each link $(j,k)$, there exists $\lf\in \Lc_{\nf_j}$ and $\lf'\in \Lc_{\nf_k}$ such that $|z_\lf-z_{\lf'}|\leq N_{\nf}$.
\end{enumerate} 
Let $Z$ be a (non-repeating) subset of $\Zb_M^4$ with $|Z|=r$ (the number of leaves of $\Tc$), we define $Z\leftrightarrow (\Tc,N_\nf)$, if there exists a bijective mapping $(z_\lf):\Lc\to Z$ that is admissible.

Now, suppose we are also given the multiplicities $m_\lf$, and let $m=\sum_{\lf\in\Lc} m_\lf$. For any $(y_1,\cdots,y_{m})\in(\Zb_M^4)^{m}$, we define $(y_1,\cdots,y_{m})\leftrightarrow (\Tc,N_\nf,m_\lf)$, if there exists an admissible $(z_\lf)$ such that the \emph{multiset} $\{y_1,\cdots,y_{m}\}=\cup_{\lf\in\Lc}\{\textrm{$m_\lf$ copies of $z_\lf$}\}$. We also write $(y_1,\cdots,y_{m})\leftrightarrow(\Tc,N_\nf)$ (resp. $(y_1,\cdots,y_{m})\leftrightarrow\Tc$), if there exists some choice of $(m_\lf)$ (resp. some choice of $(N_\nf,m_\lf)$) such that $(y_1,\cdots,y_{m})\leftrightarrow (\Tc,N_\nf,m_\lf)$.
\end{definition}
\begin{lemma}\label{lem.hepp} For any (non-repeating) finite subset $Z\subseteq\Zb_M^4$ there always exists $(\Tc,N_\nf)\leftrightarrow Z$ with $1<N_\nf\lesssim M$. For any vector $(y_1,\cdots,y_{2n})\in (\Zb_M^4)^{2n}$ such that $y_{j}=y_{\kappa(j)}$ for some bijection $\kappa:\Ac\to\Ac^c$, there always exists $(\Tc,N_\nf,m_\lf)\leftrightarrow (y_1,\cdots,y_{2n})$ with $1<N_\nf\lesssim M$ and $m_\lf$ even.
\end{lemma}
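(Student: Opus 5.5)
We build the Hepp tree by the standard single-linkage (Kruskal-type) clustering at dyadic thresholds. Given $Z\subseteq\Zb_M^4$ with $|Z|=r$, we consider for each dyadic $N\in 2^{\Nb}$, $N\geq 2$, the graph $\Gamma_N$ on vertex set $Z$ in which $z,z'$ are joined if $0<|z-z'|\leq N$. Let $\Pi_N$ be the partition of $Z$ into connected components of $\Gamma_N$. These partitions get coarser as $N$ increases. Since distinct points of $\Zb^4$ are at distance $\geq1$, $\Pi_1$ consists of singletons. Since $\operatorname{diam}\Zb_M^4\lesssim M$, there is a dyadic $N_{\max}\lesssim M$ with $\Pi_{N_{\max}}=\{Z\}$.

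The leaves of $\Tc$ are the points of $Z$. The branching nodes are the blocks $P$ that appear in some $\Pi_N$ with $|P|\geq2$, with $N_P$ the smallest dyadic $N$ for which $P$ is a block of $\Pi_N$. The parent of a node is the smallest block strictly containing it. A block first appearing at level $N_P$ is a union of at least two blocks of $\Pi_{N_P/2}$, so each branching node has at least two children. The marking satisfies $1<N_\nf\lesssim M$, and $N_{\nf^+}>N_\nf$ because a strictly larger block first appears at a strictly later level. We take $z_\lf$ to be the point itself, which is injective.

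We then check admissibility. Condition (b) holds because the children of $\nf$ are exactly the blocks of $\Pi_{N_\nf/2}$ contained in $\nf$, and they are connected inside $\Gamma_{N_\nf}$. Any edge of $\Gamma_{N_\nf}$ between two different children gives a link with $|z_\lf-z_{\lf'}|\leq N_\nf$, and connectivity of the induced graph gives a spanning set of links. For condition (a), take $\lf\neq\lf'$ with $\nf=\lf\vee\lf'$. Then $\lf,\lf'$ lie in different children of $\nf$, hence in different components of $\Gamma_{N_\nf/2}$. So they are not adjacent there, i.e. $|z_\lf-z_{\lf'}|>N_\nf/2$. The only point to be careful about is this factor $\tfrac12$, and it is exactly what the definition permits.

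For the second statement, let $Z$ be the set of distinct values among $y_1,\dots,y_{2n}$, apply the first part to $Z$, and set $m_\lf:=\#\{j:y_j=z_\lf\}$. Since $\kappa:\Ac\to\Ac^c$ is a bijection with $y_j=y_{\kappa(j)}$, the indices $\{1,\dots,2n\}$ split into pairs $\{j,\kappa(j)\}$ sharing a common value. Hence each multiplicity $m_\lf$ is a positive even number, in particular $m_\lf\geq2$ as required in Definition~\ref{def.hepp}. The multiset identity in Definition~\ref{def.hepp_3} holds by construction.

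A degenerate case arises when $|Z|=1$: the tree is then a single leaf with no branching nodes, and the statement holds trivially under the convention that a lone leaf is allowed as a Hepp tree. There is no real obstacle in this lemma; the only care needed is the strict monotonicity of the marking and the constant $\tfrac12$ in (a).
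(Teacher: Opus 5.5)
Your proof is correct and is essentially the paper's construction: single-linkage clustering at dyadic thresholds, with branching nodes given by the connected components. The paper jumps directly to the next dyadic level at which a merge occurs, while you use every level and define nodes by first appearance; this is only a bookkeeping difference.

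There is one small slip at the bottom level. With your non-strict edges $0<|z-z'|\le N$, the partition $\Pi_1$ is not all singletons, since points at distance exactly $1$ are joined, so your case $N_\nf=2$ of (a) does not go through as written. Either use strict edges $|z-z'|<N$, as the paper effectively does via the windows $[N/2,N)$, or simply declare the bottom partition to be singletons and note that (a) at $N_\nf=2$ only needs $|z_\lf-z_{\lf'}|\ge 1$.
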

\begin{proof} Let the set of distinct points in $(y_j)$ be $Z=\{z_1,\cdots,z_r\}$; by the existence of $\kappa$, each $z_i$ occurs an even number of times, say $m_i$ times. We only need to construct $(\Tc,N_\nf)\leftrightarrow Z$, then by Definition \ref{def.hepp_3} we will automatically have $(\Tc,N_\nf,m_\lf)\leftrightarrow (y_1,\cdots,y_{2n})$, where $m_\lf=m_i$ for $z_\lf=z_i$.

Start from $r$ leaves $\lf$ representing these $z_i$, which we view as a forest formed by $r$ (trivial) trees. Define $N_0\in 2^\Nb$ such that
\[\min\big\{|z_\lf-z_{\lf'}|:\lf\neq\lf'\big\}\in [N_0/2,N_0),\] then $N_0\geq 2$. Consider all pairs $(\lf,\lf')$ satisfying $|z_\lf-z_{\lf'}|\in [N_0/2,N_0)$, and the connected components of the resulting graph defined by the leaves and these leaf pairs. For each connected component of $\geq 2$ leaves (at least one of them exists by definition of $N_0$), draw a branching node $\nf$ whose children are the leaves in the this component, and mark it by $N_\nf=N_0$. By construction, Definition \ref{def.hepp_3} (a)--(b) are satisfied by each $\nf$.

Now, we have turned the forest of $r$ trees into a smaller forest of $q<r$ trees; moreover for leaves $\lf,\lf'$ in different trees we must have $|z_\lf-z_{\lf'}|\geq N_0$. Then define $N_1\in 2^\Nb$ such that
\[\min\big\{|z_\lf-z_{\lf'}|:\textrm{$\lf,\lf'$ in different trees}\big\}\in [N_1/2,N_1),\] then $N_1\geq 2N_0$. We then consider all pairs of trees $(\Tc,\Tc')$ such that $|z_\lf-z_{\lf'}|\in [N_1/2,N_1)$ for some leaf $\lf$ of $\Tc$ and $\lf'$ of $\Tc'$, and the connected components of the resulting graph defined by the trees and these tree pairs. For each connected component of $\geq 2$ trees (there is at least one of them by definition of $N_1$), draw a branching node $\mf$ whose children are the roots of the trees in the corresponding component, and mark it by $N_\mf=N_1$. By construction, Definition \ref{def.hepp_3} (a)--(b) are satisfied by each $\mf$.

Repeating this, in the end we get a single tree $\Tc$ with a marking $(N_\nf)$, for which $(z_\lf):\Lc\to Z$ is bijective and admissible. Therefore we get $Z\leftrightarrow(\Tc,N_\nf)$, which completes the proof.
\end{proof}

{\it Example.} Consider $\{z_1,\cdots,z_{100}\}$ with $z_i=(i,0,0,0)$. The resulting Hepp tree consists of a root with $100$ children all being leaves. Indeed, $N_0=2$, and in the first step every leaf pair $(\lf_j,\lf_{j+1})$ is linked yielding one single connected component. This is a possible configuration which would correspond to \eqref{eq.idea_9}. Note that our construction, compared to the earlier construction of binary Hepp trees, guarantees the \emph{strict} inequality $N_{\nf^+}>N_{\nf}$, which is important in controlling the combinatorial factors, see (\ref{eq.hepp_sum}) and \cite[(A.15)]{HairerQuastel}.

\medskip

Now start from (\ref{eq.reduce_4}). By Lemma \ref{lem.hepp}, for each $\kappa$ and each $(y_1,\cdots,y_{2n})$ (which we may abbreviate as $(y_j)$), there must exist a Hepp tree $\Tc$ such that $(y_j)\leftrightarrow\Tc$. This $\Tc$ has $O(C^n)$ choices by Lemma \ref{lem.hepp_2}, so we may fix the choice of $\Tc$ from now on. Once $\Tc$ is fixed and $(y_j)\leftrightarrow\Tc$, there must exist some $(N_\nf)$ and even $(m_\lf)$ such that $(y_j)\leftrightarrow (\Tc,N_\nf,m_\lf)$. This leads to the following decomposition
\begin{equation}\label{eq.vol_1}
\sum_{(y_j)\leftrightarrow \Tc}\sum_\kappa\ (\cdots)=\sum_{(N_\nf,m_\lf)}\ \sum_{(y_j)\leftrightarrow (\Tc,N_\nf,m_\lf)}\sum_\kappa\ \frac{(\cdots)}{\#\{(N_\nf',m_\lf'):(y_j)\leftrightarrow (\Tc,N_\nf',m_\lf')\}}
\end{equation} (which follows by switching the order of summation), where $(\cdots)$ is any expression in $(y_1,\cdots,y_{2n})$, and the denominator is the \emph{number of} $(N_\nf',m_\lf')$ with $m_\lf'$ even such that $(y_1,\cdots,y_{2n})\leftrightarrow (\Tc,N_\nf',m_\lf')$.

In (\ref{eq.vol_1}), define $Z$ to be the set of distinct points in $(y_j)$, then $(y_j)\leftrightarrow (\Tc,N_\nf,m_\lf)$ implies $Z\leftrightarrow(\Tc,N_\nf)$. 
Therefore we can further decompose (\ref{eq.vol_1}) as
\begin{equation}\label{eq.vol_1.5}
\sum_{(y_j)\leftrightarrow \Tc}\sum_\kappa\ (\cdots)=\sum_{(m_\lf)}\sum_{(N_\nf)}\sum_{Z\leftrightarrow (\Tc,N_\nf)}\sum_{\substack{(y_j)\leftrightarrow (\Tc,N_\nf,m_\lf)\\ Z\textrm{ fixed}}}\sum_{\kappa}\frac{(\cdots)}{\#\{(N_\nf',m_\lf'):(y_j)\leftrightarrow (\Tc,N_\nf',m_\lf')\}}.
\end{equation} For fixed $Z=\{z_1,\cdots,z_r\}$, consider the \emph{multiset} $Y=\{y_1,\cdots,y_{2n}\}$; this $Y$ is uniquely determined by a permutation $(m_\lf)_{\lf\in\Lc}$ into $(m_1,\cdots,m_r)$ (denote it by $\chi$), such that $Y=Z_\chi:=\cup_{i=1}^r\{\textrm{$m_i$ copies of $z_i$}\}$. Therefore we can further decompose (\ref{eq.vol_1.5}) as
\begin{equation}\label{eq.vol_1.8}
\sum_{(y_j)\leftrightarrow \Tc}\sum_\kappa\ (\cdots)=\sum_{(m_\lf)}\sum_\chi\sum_{(N_\nf)}\sum_{Z\leftrightarrow (\Tc,N_\nf)}\sum_{\substack{(y_j):\textrm{ permutation}\\\textrm{of  $Y=Z_\chi$}}}\sum_{\kappa}\frac{(\cdots)}{\#\{(N_\nf',m_\lf'):(y_j)\leftrightarrow (\Tc,N_\nf',m_\lf')\}}.
\end{equation}

Note that the number of choices of $(m_\lf)$ with $\sum_\lf m_\lf=2n$, is bounded by $2^{2n}$. Also for fixed $(m_\lf)$, if the value $\ell\,(1\leq \ell\leq q)$ occurs $b_\ell$ times among these $m_\lf$, where $\sum_\ell b_\ell=r$ and $\sum_\ell \ell b_\ell=\sum_\lf m_\lf=2n$, then the number of choices of $\chi$ is
\begin{equation}\label{eq.perm}\frac{r!}{b_1!\cdots b_q!}=\frac{(b_1+\cdots +b_q)!}{b_1!\cdots b_q!}=\prod_{\ell=1}^q\binom{b_\ell+\cdots +b_q}{b_\ell}\leq\prod_{\ell=1}^q 2^{b_\ell+\cdots +b_q}=2^{b_1+\cdots +qb_q}=2^{2n}.\end{equation} As such, in the proof below we will fix the choices of $(m_\lf)$ and $\chi$.

Now consider (\ref{eq.vol_1.8}) with $(m_\lf)$ and $\chi$ fixed. For each fixed $Z$, the multiset $Y=Z_\chi$ is also fixed; we denote it by $Y=\{y_1^*,\cdots,y_{2n}^*\}$. Then we have
\begin{equation}\label{eq.vol_1_new}
\sum_{\substack{(y_j):\textrm{ permutation}\\\textrm{of  $Y$}}}\sum_{\kappa}\,\mathrm{Func}(y_1,\cdots,y_{2n})\leq\prod_\lf \frac{(m_\lf/2)!}{m_\lf!}\sum_{\pi}\,\mathrm{Func}(y_{\pi(1)}^*,\cdots,y_{\pi(2n)}^*)
\end{equation} for any function $\mathrm{Func}(y_1,\cdots,y_{2n})\geq 0$, where $\pi$ runs over permutations of $[1,2n]$ (\emph{not} of $Y$). This is because each permutation $(y_j)$ of $Y$ corresponds to $\prod_\lf m_\lf!$ permutations of $[1,2n]$ (as each $z_\lf$ has $m_\lf$ copies); also for fixed $(y_j)$, the number of choices of $\kappa$ is bounded by $\prod_\lf (m_\lf/2)!$ (because $y_k=y_{\kappa(j)}$, so $\kappa$ is determined by a collection of bijections between two sets of cardinality $m_\lf/2$), so (\ref{eq.vol_1_new}) is true.

{\it Example.} We demonstrate the above re-sum with $(y_1,\cdots,y_8)$, namely $n=4$. First, fix $\Ac=\{1,2,4,6\}$ and $\Ac^c=\{3,5,7,8\}$. Then, fix $\Tc$ to be the tree whose root $\rf$ has three children all being leaves $\lf_1,\lf_2,\lf_3$. Then, fix $N_{\rf}=32$, $(m_{\lf_1},m_{\lf_2},m_{\lf_3})=(4,2,2)$. With these fixed, one can then fix $Z=\{a,b,c\}$ and $\chi=(4,2,2)$, which determines the multiset $Y=\{a,a,a,a,b,b,c,c\}$, and then $(y_j)$ runs over all permutations of $Y$. 
(Note that if we instead fix $Z=\{a,b,c\}$ and $\chi=(2,4,2)$, then $Y=\{a,a,b,b,b,b,c,c\}$.) With this $Y$, if $(y_j)=(a,b,a,a,b,c,a,c)$, then there exists a bijection $\kappa:\Ac\to \Ac^c$ given by $1\mapsto 3$, $2\mapsto 5$, $4\mapsto 7$, $6\mapsto 8$ such that $y_{\kappa(j)}=y(j)$. On the other hand if $(y_j)=(a,a,b,a,b,a,c,c)$ there is no such $\kappa$.

Clearly $(y_j)=(a,b,a,a,b,c,a,c)$ corresponds to $4!2!2!$ permutations of $[1,8]$ by swapping identical letters, and it also admits $2!1!1!$ choices of $\kappa$ since one needs to decide how the $a$ at locations $\{1,4\}\subset \Ac$ are mapped to the $a$ at locations $\{3,7\}$, etc. Note that when upper bounding in terms of $\pi\in S_8$,  the case $(y^*_{\pi(j)})=(a,a,a,a,b,c,b,c)$ (for instance) should be avoided, since all the copies of $a$ form a single block in $\pi$ which would mean that $\kappa$ has a fully paired interval, i.e. not primitive, see (c) below \eqref{eq.vol_1_fin}. 
\medskip

Putting everything together, by fixing $(m_\lf,\chi)$, applying (\ref{eq.vol_1_new}) to (\ref{eq.vol_1.8}) and inserting the summand in (\ref{eq.reduce_4}) into the above $(\cdots)$, we get that (note also that the denominators in (\ref{eq.vol_1})--(\ref{eq.vol_1.8}) are symmetric in $(y_j)$)
\begin{multline}\label{eq.vol_1_fin}
\textrm{LHS of }(\ref{eq.proof_1})\leq C^n\eps^{-4}\sup_{(\Ac,\Tc,m_\lf,\chi)}\sum_{(N_\nf)}\sum_{Z\leftrightarrow (\Tc,N_\nf)}\prod_\lf(m_\lf!)^{-1/2}\cdot\frac{1}{\#\{(N_\nf',m_\lf'):(y_j^*)\leftrightarrow (\Tc,N_\nf',m_\lf')\}}\\
\times\sum_{\pi}^{(*)}\max_{i,j}\langle y_i^*-y_j^*\rangle^2\prod_{j=1}^{2n-1}\langle y_{\pi(j)}^*-y_{\pi(j+1)}^*\rangle^{-2},
\end{multline} where $Y=\{y_1^*,\cdots,y_{2n}^*\}=Z_\chi$ as defined above and $\pi\in S_{2n}$ (this $\pi$ will be identified with a permutation of $m_\lf$ \emph{distinct} copies of each leaf $\lf$). Here in (\ref{eq.vol_1_fin}) we note the following conditions in the summation:
\begin{enumerate}[{(a)}]
\item We have $1\leq N_\nf\lesssim M$ for each $\nf$;
\item Note that the values of $y_1$ and $y_{2n}$ are fixed, so in the summation in $Z$, these two values (or one value if $y_1=y_{2n}$) must belong to $Z$;
\item Since $\kappa$ is primitive, this implies that in the summation in $\pi$, this $\pi$ must satisfy the following condition: for \emph{any proper subset} $\Nc$ of leaves in $\Tc$, all the copies of leaves $\lf\in\Nc$ \emph{cannot} form a single block (i.e. image of a subinterval) in $\pi$. 
\end{enumerate}

The goal is now to estimate (\ref{eq.vol_1_fin}) under the assumptions (a)--(c) above. To do this we will treat the summation in $\pi$ (with fixed $Z$), and the summation in $Z$ separately, in the following two propositions:
\begin{prop}\label{prop.hepp_3} 
For any Hepp tree $\Tc$ and marking $(N_\nf)$, let $\mathrm{Aut}(\Tc)$ be the automorphism group of $\Tc$  (preserving the root), and $\mathrm{Aut}(\Tc,N_\nf)$ be the automorphism group of the marked tree $\Tc$ (preserving the root and the marks). Then, for any $(m_\lf)$ and any $(y_j)\leftrightarrow (\Tc,N_\nf,m_\lf)$ we have
\begin{equation}\label{eq.hepp_2}
\#\{(N_\nf',m_\lf'):(y_j)\leftrightarrow (\Tc,N_\nf',m_\lf')\}\geq\frac{|\mathrm{Aut}(\Tc)|}{|\mathrm{Aut}(\Tc,N_\nf)|}.
\end{equation}
In addition, for any $z_0,z_1\in\Zb_M^4$ we have
\begin{equation}\label{eq.vol_2}\frac{|\mathrm{Aut}(\Tc,N_\nf)|}{|\mathrm{Aut}(\Tc)|}\sum_{\substack{Z\leftrightarrow (\Tc,N_\nf)\\z_0\in Z}}1\leq C^n\prod_{\nf\in\Bc} N_\nf^{4(\gamma_\nf-1)},\end{equation}
\begin{equation}\label{eq.vol_2.1}
\frac{|\mathrm{Aut}(\Tc,N_\nf)|}{|\mathrm{Aut}(\Tc)|}\sum_{\substack{Z\leftrightarrow (\Tc,N_\nf)\\z_0,z_1\in Z}}1\leq C^n\cdot\langle z_0-z_1\rangle^{-4}\cdot\prod_{\nf\in\Bc} N_\nf^{4(\gamma_\nf-1)},
\end{equation}
where $\gamma_\nf$ is the number of children of $\nf$ as in Definition \ref{def.hepp_2}.
\end{prop}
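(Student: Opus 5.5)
The plan is to treat (\ref{eq.hepp_2}) as a pure symmetry statement, and then deduce (\ref{eq.vol_2})--(\ref{eq.vol_2.1}) from a top-down, multiscale lattice-animal count of the admissible configurations.

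\textbf{Step 1: the lower bound (\ref{eq.hepp_2}).} Fix an admissible bijection $(z_\lf):\Lc\to Z$ witnessing $(y_j)\leftrightarrow(\Tc,N_\nf,m_\lf)$. For any $\sigma\in\mathrm{Aut}(\Tc)$, the map $(z_{\sigma^{-1}(\lf)})$ should be admissible for the transported marking $N'=N\circ\sigma^{-1}$. Indeed, conditions (a)--(b) of Definition \ref{def.hepp_3} are phrased purely in terms of lowest common ancestors and children, and $\sigma$ preserves both. The same $\sigma$ transports the multiplicities to $m'=m\circ\sigma^{-1}$, and the multiset $\{y_j\}$ is unchanged. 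Hence every element of the $\mathrm{Aut}(\Tc)$-orbit of $(N_\nf,m_\lf)$ is counted on the left of (\ref{eq.hepp_2}). The stabilizer of $(N,m)$ is contained in the stabilizer of $N$, which is $\mathrm{Aut}(\Tc,N_\nf)$. By orbit--stabilizer the orbit has size at least $|\mathrm{Aut}(\Tc)|/|\mathrm{Aut}(\Tc,N_\nf)|$, which is (\ref{eq.hepp_2}).

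\textbf{Step 2: reduction of (\ref{eq.vol_2}) to counting labelled maps.} I would count admissible injective maps $(z_\lf):\Lc\to\Zb_M^4$ with $z_0$ in the image. Distinct elements of $\mathrm{Aut}(\Tc,N_\nf)$ give distinct admissible bijections onto the same $Z$. This uses that an automorphism fixing all leaves is trivial, since each branching node has at least two children. Therefore
\[
|\mathrm{Aut}(\Tc,N_\nf)|\cdot\#\{Z\}\leq\#\{\textrm{admissible maps}\},
\]
and it suffices to show
\[
\#\{\textrm{admissible maps}\}\leq C^n|\mathrm{Aut}(\Tc)|\prod_{\nf\in\Bc}N_\nf^{4(\gamma_\nf-1)}.
\]

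\textbf{Step 3: the top-down count.} I would proceed from the root downwards, treating each branching node $\nf$ at the coarse scale $N_\nf$. By (a), two leaves in different children of $\nf$ are at distance at least $\frac12N_\nf$. Since $N_{\mf}\leq N_\nf/2$ strictly for descendants, and the dyadic scales along any descending path sum geometrically, each child subtree $\Tc_{\nf_j}$ should essentially occupy a cluster of size at most a constant times $N_\nf$. I will need to check carefully that this constant does not grow with the size of the subtree; this uses the linking condition (b) and the strict inequality $N_{\nf^+}>N_\nf$. By (b), the clusters of the $\gamma_\nf$ children then form a connected lattice animal at scale $N_\nf$. Such an animal, with one site pinned because it contains the already-placed cluster of $\nf$, has at most $C^{\gamma_\nf}$ shapes as an unordered set. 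Placing it in absolute position costs $N_\nf^{4(\gamma_\nf-1)}$ relative to the parent's cluster. Multiplying over $\nf$ and using $\sum_\nf(\gamma_\nf-1)=r-1\leq 2n$ gives $C^n\prod_\nf N_\nf^{4(\gamma_\nf-1)}$ for \emph{unordered} placements. Choosing which leaf is sent to $z_0$ is absorbed in the same way, by starting the descent from the root and choosing the child containing $z_0$ at each level.

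\textbf{Step 4: the main obstacle.} The hard part is the labelling: assigning the $\gamma_\nf$ children of $\nf$ to the sites of the animal. Naively this costs $\gamma_\nf!$, and the product of these is super-exponential. My plan is to order the children canonically, by isomorphism type of the subtree together with position in the animal, for instance lexicographic order of cluster centres. Then only permutations within classes of isomorphic children remain. Those permutations are exactly what $|\mathrm{Aut}(\Tc)|$ counts: recursively, $|\mathrm{Aut}(\Tc)|=\prod_\nf\prod_{\textrm{iso.\ classes}}(\textrm{class size})!\times\prod|\mathrm{Aut}(\textrm{child})|$. Non-isomorphic children are distinguishable by their shape, so once the canonical order is fixed their assignment carries no factorial. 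I expect verifying this matching of factorials, without losing more than $C^{\gamma_\nf}$ per node, to be the delicate point; this is also where the strict inequality $N_{\nf^+}>N_\nf$ of the construction is essential.

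\textbf{Step 5: the two-point bound (\ref{eq.vol_2.1}).} Let $\nf_*=\lf_0\vee\lf_1$ be the lowest common ancestor of the leaves mapped to $z_0$ and $z_1$. Admissibility forces $N_{\nf_*}\gtrsim|z_0-z_1|$. When the descent places the child containing $z_1$ at node $\nf_*$, its location is pinned up to $O(1)$ rather than ranging over the full $N_{\nf_*}^4$ volume. This saves a factor $N_{\nf_*}^{-4}\lesssim\langle z_0-z_1\rangle^{-4}$ relative to the count in Step 3, and the rest of the argument is unchanged.
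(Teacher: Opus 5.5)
Your Steps 1--2 are correct and coincide with the paper: orbit--stabilizer for the action of $\mathrm{Aut}(\Tc)$ on markings, then $|\mathrm{Aut}(\Tc,N_\nf)|\cdot\#\{Z\}\le\#\{\text{admissible maps}\}$. The counting in Steps 3--4, however, rests on two claims that are false, and each hides a missing ingredient.

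\textbf{Cluster size.} The child clusters are not of size $O(N_\nf)$ with a constant independent of the subtree. Condition (b) only produces chains of links, so $\{z_\lf:\lf\in\Lc_{\nf_j}\}$ can have diameter comparable to $\widetilde N_{\nf_j}:=\sum_{\mf\le\nf_j}\gamma_\mf N_\mf$. Geometric decay along descending paths does nothing about the breadth factors $\gamma_\mf$. For example, take $k$ collinear points at unit spacing (a node marked $2$ with $k$ leaf children) together with one point at distance $3$ from the middle one (a root marked $4$). This is admissible, the child cluster has diameter $k-1\gg N_\rf$, and given that cluster the extra point has a number of admissible positions growing linearly in $k$, not $O(N_\rf^4)$. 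So a scale-$N_\nf$ lattice animal with $\gamma_\nf$ sites is the wrong object. Per node you must pay factors like $(1+\widetilde N_{\nf_i}/N_\nf)(1+\widetilde N_{\nf_j}/N_\nf)^4$, as in (\ref{eq.count_5}): the covering number of an already placed sibling at scale $N_\nf$, times the volume available to the next anchor. You then have to show these are globally harmless. The paper bounds them by $\exp(6\sum_j\widetilde N_{\nf_j}/N_\nf)$ and uses $\sum_\nf\sum_j\widetilde N_{\nf_j}/N_\nf=\sum_\mf\gamma_\mf\sum_{\nf>\mf}N_\mf/N_\nf\le 2r$, see (\ref{eq.hepp_sum}). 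This aggregate bound, not the labelling, is where $N_{\nf^+}>N_\nf$ is used. The same issue affects your Step 5. Admissibility (a) gives $|z_0-z_1|\ge\frac12N_{\nf_*}$, which is the wrong direction. The upper bound you need, $|z_0-z_1|\le\widetilde N_{\nf_*}\le 2rN_{\nf_*}$, comes from (b) and costs only a harmless factor $r^4$.

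\textbf{Labelling.} Assigning children to sites does cost a factorial even when the children are pairwise non-isomorphic. Different assignments of isomorphism types to sites give different sets $Z$, so you pay the multinomial $\gamma_\nf!/\prod_K|K|!$ over isomorphism classes $K$ of children. This is not $\le C^{\gamma_\nf}$ per node. If the root has $q$ pairwise non-isomorphic star children, the $q!$ orders along a chain give distinct sets $Z$, while $\mathrm{Aut}(\Tc)$ gains nothing at the root. The estimate survives only because $q$ pairwise non-isomorphic subtrees require many leaves, which is a global fact. The missing lemma is the paper's Step 3, $|\mathrm{Aut}(\Tc)|\ge C^{-r}\prod_{\nf\in\Bc}\gamma_\nf!$. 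To prove it, let $\mathrm{Aut}(\Tc)$ act freely on the $\prod_\nf\gamma_\nf!$ choices of orderings of children at all branching nodes. The orbits inject into plane trees with $r$ leaves, of which there are at most $C^r$; note that the recursion in Lemma \ref{lem.hepp_2} counts ordered trees. With this lemma no canonical ordering is needed. The paper simply pays the full $\gamma_\nf!$, which arises from summing over labelled link trees via the weighted Cayley formula, and absorbs $\prod_\nf\gamma_\nf!$ into $|\mathrm{Aut}(\Tc)|$.
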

\begin{prop}\label{prop.main_3} Fix a Hepp tree $\Tc$ and $(N_\nf,m_\lf)$ with $m_\lf$ even, and a vector $(y_1,\cdots,y_{2n})$ such that $(y_j)\leftrightarrow (\Tc,N_\nf,m_\lf)$ as in Definition \ref{def.hepp_3}. Then we have
\begin{equation}\label{eq.vol_3}\sum_{\pi}^{(*)}\prod_{j=1}^{2n-1}\frac{1}{\langle y_{\pi(j)}-y_{\pi(j+1)}\rangle^2}\leq C^n\sum_{W\subseteq\Bc\backslash\{\rf\}}(n-|W|)!\prod_{\lf\in\Lc}(m_\lf!)^{1/2}\cdot\prod_{\nf\in\Bc} N_\nf^{-4(\gamma_\nf-1)}\cdot N_\rf^{-2}\prod_{\nf\not\in W}\frac{N_\nf}{N_{\nf^+}}.
\end{equation} Here $\pi\in S_{2n}$ in (\ref{eq.vol_3}) (which is identified with a permutation of $m_\lf$ \emph{distinct} copies of each leaf $\lf$), and satisfies condition that, for any \emph{proper} subset of leaves, all their copies cannot form a single block in $\pi$ (a single block is the image of a subinterval under $\pi$). Below we still call this the \emph{primitivity} condition for $\pi$.
\end{prop}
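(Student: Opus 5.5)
We will prove (\ref{eq.vol_3}) by induction on the Hepp tree $\Tc$, peeling off one lowest ``unsafe'' subtree at a time, as outlined in Section \ref{sec.idea_3}. For a permutation $\pi$ satisfying the primitivity condition, we first attach its \emph{index set} $W(\pi)\subseteq\Bc\backslash\{\rf\}$. This is the set of non-root branching nodes $\nf$ such that all copies of all leaves in $\Lc_\nf$ form a single block in $\pi$. Since $W$ has at most $2^{|\Bc|}\leq C^n$ choices, it suffices to bound the sum over $\pi$ with $W(\pi)=W$ fixed by the $W$-th summand on the right of (\ref{eq.vol_3}). By Definition \ref{def.hepp_3}(a), each factor $\langle y_{\pi(j)}-y_{\pi(j+1)}\rangle^{-2}$ is comparable to $N_{\lf\vee\lf'}^{-2}$ when the two consecutive copies belong to different leaves $\lf\neq\lf'$, and equals $1$ otherwise. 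Hence the summand depends only on the sequence of lowest common ancestors crossed by consecutive entries of $\pi$. For each branching node $\nf$, let $k_\nf$ be the number of ``crossings'' at $\nf$. Then the summand equals $\prod_\nf N_\nf^{-2k_\nf}$, up to $C^n$.

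We will state the inductive claim for a general subtree $\Tc_\nf$ and the copies of its leaves broken into a prescribed number $b$ of blocks (as in the paper's Proposition \ref{prop.main2_2}). The sum over arrangements of the copies into $b$ blocks should be bounded by the right side of (\ref{eq.vol_3}) restricted to $\Tc_\nf$, times combinatorial factors in $b$. The inductive step goes as follows. Choose a node $\nf$ whose children are all leaves, or more generally a maximal group of nodes of comparable scale below a scale gap, and merge $\Tc_\nf$ into a single new leaf. The multiplicity of the new leaf is the number of blocks into which $\pi$ splits the copies of $\Lc_\nf$. The sum over $\pi$ then factors into three pieces: a permutation $\pi_1$ of the copies of $\Lc_\nf$ cut into blocks (encoded by the skipped set $O$), a permutation $\pi_2$ of the reduced tree $\Tc'$, and the interleaving data, which is determined by $\pi_2$. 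The $\pi_2$ sum is handled by the induction hypothesis. The $\pi_1$ sum requires a single-scale estimate (Proposition \ref{prop.main_4}), which we prove directly.

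That single-scale estimate is where the factorial balance appears. Consider $\gamma$ children at scale $N=N_\nf$, the copies of the child leaves with multiplicities, and $b$ blocks. We want to show that the sum over orderings of $\prod N^{-2}$ over the internal crossings is at most
\[
C^{\sum m}\,\big(\textstyle\prod m_\lf!\big)^{1/2}\,\big((\#\text{crossings})!\big)^{1/2}\,N^{-4(\gamma-1)}\cdot(\text{gain } N/N_{\nf^+} \text{ when } b>1).
\]
The key tool is the packing inequality (\ref{eq.idea_10}): the points are $\geq N/2$ apart in $\Zb^4$, so summing $|x-y|^{-2}$ over the $k$ available next points costs only $k^{1/2}N^{-2}$ rather than $k$. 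Iterating this along the ordering gives $(k!)^{1/2}$ instead of $k!$. The multiplicities produce multinomial coefficients, which we absorb into $\prod (m_\lf!)^{1/2}$ via elementary inequalities such as $\binom{a+b}{a}\leq 2^{a+b}$ and $(\sum a_i)!/\prod a_i!\leq C^{\sum a_i}$ after square roots. The scale factor $N^{-4(\gamma-1)}$ comes from the fact that the points must span all $\gamma$ children, so there are at least $2(\gamma-1)$ crossings at scale $N$. Using Definition \ref{def.hepp_3}(b), connectivity lets us extract exactly $2(\gamma-1)$ factors of $N^{-2}$, while the remaining crossings are controlled by the square-root counting. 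When $b>1$, that is $\nf\notin W$, the extra crossing at the parent scale $N_{\nf^+}$ together with one fewer internal crossing gives the gain $N_\nf/N_{\nf^+}$. Finally, primitivity combined with $\nf\notin W$ converts the accumulated half-factorials across all levels into $(n-|W|)!$: each node in $W$ contributes a single block and thus removes one free element from the global count.

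The main obstacle is making this bookkeeping sharp up to $C^n$. First, the half-factorials from different levels must multiply to at most $C^n(n-|W|)!$ with no extra $(\log n)^n$-type losses. Second, the comparable-but-unequal scales inside a group must not generate super-exponential constants. We will first attempt the naive induction, with one node at a time and exactly equal scales. If the constants degrade, we will instead group nodes whose scales are within a factor $C_0$, and treat each such group as one scale level.
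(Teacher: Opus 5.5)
Your architecture matches the paper's: collapse a lowest gapped subtree into a compound leaf, factor $\pi\leftrightarrow(\pi_1,O,\pi_2)$, induct on $\pi_2$, and bound $\pi_1$ by a single-group packing estimate. However, the bookkeeping meant to produce the right-hand side of (\ref{eq.vol_3}) is off by one block.

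\textbf{The index set $W$.} Primitivity forbids the copies of $\Lc_\nf$ (a proper subset of leaves for $\nf\neq\rf$) from forming a single block. So your $W(\pi)$ is empty for every $\pi$ in the sum, and your rule ``$b>1$ gives an extra $N_\nf/N_{\nf^+}$'' would bound the whole sum by the $W=\varnothing$ term alone. That is false. Let $\rf$ have children $\nf$ and a leaf $c$, let $\nf$ have leaves $a,b$, take all $m_\lf=2$, and $N_\nf=N\ll N_\rf=L$. Then $(a_1,b_1,c_1,a_2,b_2,c_2)$ is primitive and, by Definition \ref{def.hepp_3}, its summand is $\gtrsim N^{-4}L^{-6}$. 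The $W=\varnothing$ term is only $\lesssim N^{-3}L^{-7}$; only the $W=\{\nf\}$ term covers this permutation.

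The correct dichotomy is two blocks versus at least three; this is the paper's rule $W'=W\cup\{\nf\}$ iff $|O|=1$. By (\ref{eq.sum_1}), the factor $\prod_\nf N_\nf^{-4(\gamma_\nf-1)}\cdot N_\rf^{-2}$ already contains $\prod_{\nf\neq\rf}(N_\nf/N_{\nf^+})^2$. That is exactly what a split into two blocks ($|O|=1$, forced by primitivity) produces in (\ref{eq.local_3}). The additional ratio comes only from $|O|\geq 2$, via the exponent $\min(2s,3)$.

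For the same reason, your account of $N^{-4(\gamma-1)}$ (``$2(\gamma-1)$ crossings extracted via Definition \ref{def.hepp_3}(b)'') is misplaced. Condition (b) is an \emph{upper} bound on distances. It enters only through (\ref{eq.gap_3}), to make $\Tc_\nf$ look like one point from outside as in (\ref{eq.local_4}). Also, $\langle y_{\pi(j)}-y_{\pi(j+1)}\rangle^{-2}$ is \emph{not} comparable to $N_{\lf\vee\lf'}^{-2}$ up to $C^n$: inside a node with $\gamma$ leaf children, distances range up to $\sim\gamma N$. Only the upper bound holds, and replacing distances by $N^{-2}$ would forfeit exactly the packing gain.

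\textbf{The single-group estimate.} This is the heart of the proof, and your sketch does not supply it. Iterating (\ref{eq.idea_10}) handles distinct points each used once. With multiplicities, the next entry ranges over points of very different multiplicities and, inside a group, different scales. You must then pay for the sequence of (scale, multiplicity) classes visited by $\pi$, which is a multinomial coefficient and not $O(C^m)$. The inequality $(\sum a_i)!/\prod a_i!\leq C^{\sum a_i}$ you invoke already fails for $a_i\equiv 1$.

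In Proposition \ref{prop.main_4} the paper handles this as follows.
\begin{enumerate}
\item It sorts leaves into classes $(N,X,Y)$ with $P=YN^4$.
\item The bilinear Lemma \ref{lem.aux3} gives a decay $\min(1,(P_{j+1}/P_j)^{1/8})$, which pays for the $P$-sequence through Lemma \ref{lem.aux2}.
\item The multinomials inside a $P$-class are paid by surplus that your target bound lacks. One source is the decay $N_{\lf^+}^{-2(m_\lf-2)}$ of simple leaves with $m_\lf>2$; this is why $\gamma_\nf^2$ counts simple leaves only twice. The other is an extra $(m_\lf!)^{1/4}$ for compound leaves.
\end{enumerate}
The induction can afford the weight $(m_\lf!)^{3/4}$ on the collapsed leaf only because its $s+1$ copies are indistinguishable in $\pi_2$, giving $((s+1)!)^{-1}$. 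The leftover $((s+1)!)^{-1/4}$, together with $D=\exp(C_0^{10})$ per branching node, absorbs the $C_0^{s+1}$ created by the new copies.

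\textbf{Two further points.} First, your fallback grouping ``scales within a factor $C_0$'' is not enough. The diameter of $\{z_\lf:\lf\in\Lc_\nf\}$ is only bounded by $\sum_{\mf\leq\nf}\gamma^\infty_\mf N_\mf$, which can greatly exceed $N_\nf$. So $N_{\nf^+}\geq C_0N_\nf$ does not give (\ref{eq.local_4}) or (\ref{eq.local_6}). The paper instead uses the diameter criterion (\ref{eq.gap_4}), and (\ref{eq.chain_2}) shows the scale ratios inside a non-gapped group cost only $C^m$. Second, adjacent copies of the same leaf contribute a factor $1$, where no packing bound is available. They must first be merged, with the bookkeeping (\ref{eq.chain_7})--(\ref{eq.sum_2}).
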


In the rest of this subsection we prove Proposition \ref{prop.primi_1} under the assumption of Propositions \ref{prop.hepp_3} and \ref{prop.main_3}. The proofs of Propositions \ref{prop.hepp_3} and \ref{prop.main_3} are left to Sections \ref{sec.proof_hepp_3} and \ref{sec.proof_main_3} below.

\begin{proof}[Proof of Proposition \ref{prop.primi_1} assuming Propositions \ref{prop.hepp_3} and \ref{prop.main_3}] We only need to prove (\ref{eq.proof_1}). Start from (\ref{eq.vol_1_fin}). Note that, if $(y_1,\cdots,y_{2n})\leftrightarrow(\Tc,N_\nf,m_\lf)$, then by Definition \ref{def.hepp_3} (b), it is easy to see that $\langle y_i-y_j\rangle\leq O(n^2)\cdot N_\rf$ uniformly in $(i,j)$; for a precise argument see \textbf{Step 4} in the proof of Proposition \ref{prop.hepp_3} in Section \ref{sec.proof_hepp_3} below. This allows us to replace the factor $\max_{i,j}\langle y_i^*-y_j^*\rangle^2$ on the right hand side of (\ref{eq.vol_1_fin}) by $N_\rf^2$ up to loss $O(n^2)\leq C^n$.

After this, by estimating the inner sum (in $\pi$) using (\ref{eq.vol_3}), and estimating the middle sum (in $Z$) using (\ref{eq.hepp_2}) and (\ref{eq.vol_2.1}), and cancelling out various factors, we get
\begin{equation}\label{eq.vol_4}
\textrm{LHS of }(\ref{eq.proof_1})\leq C^n\eps^{-4}\sup_{(\Ac,\Tc,m_\lf,\chi)}\langle y_1-y_{2n}\rangle^{-4}\sum_{W\subseteq\Bc\backslash\{\rf\}}(n-|W|)!\sum_{(N_\nf)}\prod_{\nf\not\in W}\frac{N_\nf}{N_{\nf^+}}.
\end{equation} Since $(x_1,x_{2n})=(z,w)$ and $|x_j-\eps y_j|\lesssim\eps$, we get that \[\eps^{-4}\langle y_1-y_{2n}\rangle^{-4}\sim (|z-w|^2+\eps^2)^{-2}.\] 

Finally, consider the summation over $(N_\nf)$ on the right hand side of (\ref{eq.vol_4}). This is taken for all $\nf\in\Bc$; note that $r-1=\sum_{\nf\in\Bc}(\gamma_\nf-1)$ and $\gamma_\nf\geq 2$ implies $|\Bc|\leq r-1$. If we take the summation in the order of $N_\nf$ with $\nf$ from low to high, then for each $\nf\in (\Bc\backslash\{\rf\})\backslash W$, the value of $N_{\nf^+}$ has been fixed when we sum over $N_\nf$, so the summation in $N_\nf$ gives $\sum_{N_\nf\leq N_{\nf^+}}(N_\nf/N_{\nf^+})\leq C$; of course, for each $\nf\in W\cup\{\rf\}$ the summation in $N_\nf$ gives $\sum_{1\leq N_\nf\lesssim M}1\leq C|\log\eps|$. This implies that
\begin{equation}\label{eq.vol_5}
\sum_{(N_\nf)}\prod_{\nf\not\in W}\frac{N_\nf}{N_{\nf^+}}\leq C^n|\log\eps|^{|W|+1}.
\end{equation} 

Note that the exponent on the right hand side of (\ref{eq.vol_5}) is $\leq r-1\leq n-1$. If equality holds we must have $W=\Bc\backslash\{\rf\}$ and $r=n$ (and thus $m_\lf=2$ for each $\lf$ in view of $2n=\sum_\lf m_\lf$), in particular $y_1\neq y_{2n}$ (as $\kappa(1)\neq 2n$). Up to $O(n^2)$ loss, we may fix leaves $\ff_0\neq\ff_1$ such that $y_{1}=z_{\ff_0}$ and $y_{2n}=z_{\ff_1}$; in this case, with $y_1$ and $y_{2n}$ fixed, we have 
\[N_{\ff_0\vee \ff_1}/2\leq \langle y_1-y_{2n}\rangle\leq O(n^2)\cdot N_{\ff_0\vee \ff_1}\] by Definition \ref{def.hepp_3} (a)--(b) and the same proof quoted above, so the summation in the variable $N_{\ff_0\vee\ff_1}$ in (\ref{eq.vol_5}) is bounded by $O(\log n)\leq C^n$ rather than $O(|\log\eps|)$. In summary, we have that
\[\textrm{LHS of }(\ref{eq.vol_5})\leq C^n|\log\eps|^{\min(|W|+1,n-2)};\] inserting this into (\ref{eq.vol_4}) and using that $n\lesssim|\log\eps|$, we get
\[
\begin{aligned}\textrm{LHS of }(\ref{eq.proof_1})&\leq C^n\cdot (|z-w|^2+\eps^2)^{-2}\cdot\sum_{W\subseteq\Bc\backslash\{\rf\}}(n-|W|)!\cdot |\log\eps|^{\min(|W|+1,n-2)}\\
&\leq C^n\cdot (|z-w|^2+\eps^2)^{-2}\cdot\sup_{0\leq s\leq r-2}(n-s)!\cdot|\log\eps|^{\min(s+1,n-2)}\\
&\leq  C^n\cdot (|z-w|^2+\eps^2)^{-2}\cdot|\log\eps|^{n-2}
\end{aligned}\] using the inequality
\[(n-s)!\cdot|\log\eps|^{\min(s+1,n-2)}\leq C^n\cdot |\log\eps|^{n-2},\quad \forall \,0\leq s\leq n-2\] which follows from elementary calculus. This completes the proof.
\end{proof}

\subsection{Proof of Proposition \ref{prop.hepp_3}}\label{sec.proof_hepp_3} In this subsection we prove Proposition \ref{prop.hepp_3}. The proof is divided into 6 steps. In the proof below note $r\leq n$.

\textbf{Step 1.} First we prove (\ref{eq.hepp_2}). This is easy, because for each $\pi\in\mathrm{Aut}(\Tc)$, let $N_\nf':=N_{\pi(\nf)}$ and $m_\lf':=m_{\pi(\lf)}$, then by Definition \ref{def.hepp_3} we also have $(y_1,\cdots,y_n)\leftrightarrow (\Tc,N_\nf',m_\lf')$. Note that this defines an action of $\mathrm{Aut}(\Tc)$ on the set of $(N_\nf)$, so the orbit-stabilizer theorem implies that the number of choices for such $(N_\nf')$ (which is not more than the number of choices for $(N_\nf',m_\lf')$ in (\ref{eq.hepp_2})), is equal to $|\mathrm{Aut}(\Tc)|/|\mathrm{Aut}(\Tc,N_\nf)|$, which proves (\ref{eq.hepp_2}).

\textbf{Step 2.} For the rest of the proof we focus on (\ref{eq.vol_2})--(\ref{eq.vol_2.1}). Let $r$ be the number of leaves without multiplicity, and consider all the admissible mappings $(z_\lf):\Lc\to\Zb_M^4$ in Definition \ref{def.hepp_3}. By definition, for each admissible $(z_\lf)$ and each $\pi\in\mathrm{Aut}(\Tc,N_\nf)$, the mapping $(z_{\pi(\lf)})$ is also admissible, and for different $\pi$ the $(z_{\pi(\lf)})$ are also different (as the $z_\lf$ are distinct). This leads to
\begin{equation}\label{eq.new_2}\sum_{\textrm{$(z_\lf)$ admissible}}1\geq|\mathrm{Aut}(\Tc,N_\nf)|\cdot\sum_{Z\leftrightarrow(\Tc,N_\nf)}1.
\end{equation} Note also that, if $z_0\in Z$ (or $z_0,z_1\in Z$) we may fix leaves $\ff_0$ (or $\ff_0,\ff_1$) such that $z_{\ff_j}=z_j$ are fixed, with a loss of $O(r^2)\leq C^n$.

As such, to prove (\ref{eq.vol_2})--(\ref{eq.vol_2.1}) we only need to prove (for fixed leaves $(\ff_0,\ff_1)$ and fixed $(z_0,z_1)$) that
\begin{equation}\label{eq.count_2}\sum_{\substack{\textrm{$(z_\lf)$ admissible}\\z_{\ff_0}=z_0}}1\leq C^n\cdot|\mathrm{Aut}(\Tc)|\cdot\prod_{\nf\in\Bc} N_\nf^{4(\gamma_\nf-1)},
\end{equation}
\begin{equation}\label{eq.count_2.1}\sum_{\substack{\textrm{$(z_\lf)$ admissible}\\z_{\ff_0}=z_0,z_{\ff_1}=z_1}}1\leq C^n\cdot\langle z_0-z_1\rangle^{-4}\cdot|\mathrm{Aut}(\Tc)|\cdot\prod_{\nf\in\Bc} N_\nf^{4(\gamma_\nf-1)}.
\end{equation}

\textbf{Step 3.} In this step we prove an auxiliary estimate, namely that \footnote{As examples, if the root of $\Tc$ has $r$ children which are all leaves, $|\mathrm{Aut}(\Tc)|=\prod_{\nf\in\Bc}\gamma_\nf! = r!$; if $\Tc$ is a binary tree and every branching node has at least one child being a leaf, $|\mathrm{Aut}(\Tc)|=2$, $\prod_{\nf\in\Bc}\gamma_\nf!=2^{r-1}$. }
\begin{equation}\label{eq.count_3}
|\mathrm{Aut}(\Tc)|\geq C^{-r}\prod_{\nf\in\Bc}\gamma_\nf!.
\end{equation} 

To prove (\ref{eq.count_3}), for each $\nf\in\Bc$ consider possible linear orderings $\prec_\nf$ of the children of $\nf$; let the set of combinations $(\prec_\nf)$ of all such linear orderings be $\mathrm{Ord}(\Tc)$, then $|\mathrm{Ord}(\Tc)|=\prod_{\nf\in\Bc}\gamma_\nf!$. Now $\mathrm{Aut}(\Tc)$ naturally acts on $\mathrm{Ord}(\Tc)$ by defining \[(\pi\cdot\prec)_\nf=\prec_\nf';\qquad\pf\prec_{\nf}'\qf\Leftrightarrow \pi^{-1}(\pf)\prec_{\pi^{-1}(\nf)}\pi^{-1}(\qf),\] and this action is free, because if $\pi$ preserves a given $(\prec_\nf)$, we can start from the root $\rf$ and keep taking children in the given order, to prove inductively that $\pi$ must be identity.

From this, we know that the set of orbits of $\mathrm{Ord}(\Tc)$ under this action has size $(\prod_{\nf\in\Bc}\gamma_\nf!)\cdot|\mathrm{Aut}(\Tc)|^{-1}$. Moreover, for each $(\prec_\nf)\in\mathrm{Ord}(\Tc)$, we can construct a tree $\Tc'$ by listing the children (and associated subtrees) of each branching node $\nf$ from left to right according to $\prec_\nf$. Then two combinations of linear orderings $(\prec_\nf)$ and $(\prec_\nf')$ yield the same tree $\Tc'$ if and only if $(\prec_\nf)$ and $(\prec_\nf')$ differ by an element of $\mathrm{Aut}(\Tc)$, so the above gives an injection from the set of orbits of $\mathrm{Ord}(\Tc)$ to the set of trees $\Tc'$ with $r$ leaves and each branching node having $\geq 2$ children. Since the latter set has size $\leq C^r$ by Lemma \ref{lem.hepp_2}, this proves (\ref{eq.count_3}).

\textbf{Step 4.} In this step we prove two auxiliary results for the distribution of points $(z_\lf)$ for all $\lf\in\Tc$. Define
\begin{equation}\label{eq.count_4}\widetilde{N}_\nf:=\sum_{\mf\leq\nf}\gamma_\mf\cdot N_\mf.
\end{equation} Suppose $(z_\lf)$ satisfy Definition \ref{def.hepp_3} (b), then we have that
\begin{enumerate}[{(a)}]
\item $|z_\lf-z_{\lf'}|\leq \widetilde{N}_\nf$ for any leaves $\lf,\lf'\in\Lc_\nf$;
\item For any $R$, the set $\{z_\lf:\lf\in\Lc_\nf\}$ can be covered by at most $3(1+\widetilde{N}_\nf/R)$ balls of radius $R$.
\end{enumerate}

To prove these results, note that if a branching node $\mf\leq\nf$ has two children $\mf_i$ and $\mf_j$ which are linked as in Definition \ref{def.hepp_3} (b), then there exist leaves $\lf\in\Lc_{\mf_i}$ and $\lf'\in\Lc_{\mf_j}$ such that $|z_\lf-z_{\lf'}|\leq N_\mf$. We may then draw a new edge between $\lf$ and $\lf'$ (and the corresponding line segment between $z_\lf$ and $z_{\lf'}$ in $\Rb^4$) which has length $\leq N_\mf$. These new edges turn the \emph{set of leaves} $\lf\in\Lc_\nf$ into a new tree (whose nodes are represented by those points $z_\lf\in\Zb^4\subseteq\Rb^4$) where each new edge (represented by the line segment in $\Rb^4$) has a length, and the \emph{total length} of all new edges is at most $\sum_{\mf\leq\nf}(\gamma_\mf-1)N_\mf\leq \widetilde{N}_\nf$. With this construction, (a) is now trivial by triangle inequality, because any two points $z_\lf$ and $z_{\lf'}$ are connected by a sequence of line segments (as the new edges form a tree) whose total length is at most $\widetilde{N}_\nf$.

To prove (b), choose a maximal subset $Q\subseteq\{z_\lf\}$ such that the distance between any two points is $\geq R$, then by definition $\{z_\lf\}$ is covered by the balls of radius $R$ centered at points in $Q$, so we only need to prove $|Q|\leq 3(1+\widetilde{N}_\nf/R)$. To see this, draw a ball of radius $R/3$ centered at each $z\in Q$, then these balls are disjoint by the definition of $Q$. Since all the $z\in Q$ are connected by the line segments, for each $z\in Q$ there must exist part of the line segments connecting $z$ to a point on the boundary of this ball (unless $|Q|=1$ in which case there is nothing the prove). These line segments are disjoint and with total length $\geq |Q|\cdot R/3$ by triangle inequality, so $|Q|\cdot R/3\leq \widetilde{N}_\nf$ and thus $|Q|\leq 3\cdot \widetilde{N}_\nf/R$, as desired.

\textbf{Step 5.} Now we complete the proof of (\ref{eq.count_2})--(\ref{eq.count_2.1}). We proceed by an inductive construction. Define $\Jc_0(\Tc,N_\nf)$ and $\Jc_{01}(\Tc,N_\nf)$ to be the left hand side of (\ref{eq.count_2}) and (\ref{eq.count_2.1}) associated with $(\Tc,N_\nf)$ (we omit the subscript $(\ff_0,\ff_1)$ for simplicity). Let the root of $\Tc$ be $\rf$, and the children of $\rf$ be $\rf_1,\cdots,\rf_{q}$ (with $q=\gamma_\rf$). Let $\ff_0\in \Lc_{\rf_a}$ and $\ff_1\in\Lc_{\rf_b}$, we define $\lf_a=\ff_0$ and $\lf_b=\ff_1$; for $j\not\in\{a,b\}$ define $\lf_j$ to be the leftmost leaf in $\Lc_{\rf_j}$. Let the links between $\rf_j$ form a tree (if not then take a spanning tree) $\Hc$, we may regard $\Hc$ as fixed below. We then fix the sequence $(j_1,\cdots,j_q)$ as follows: $j_1=a$, and $j_{s}$ is such that $\rf_{j_s}$ has exactly one link with $(\rf_{j_t}:t<s)$.

Now, to bound $\Jc_0(\Tc,N_\nf)$, we count the number of choices of $(z_\lf)$. With $z_{\lf_a}=z_0$ fixed, we first determine $z_\lf$ for each $\lf\in\Lc_{\rf_a}$, which has $\Jc_0(\Tc_{\rf_a},N_\nf)$ choices. Suppose the values of $z_\lf$ for each $\lf\in\Lc_{\rf_{j_t}}\,(t<s)$ have been fixed, we next determine the values of $z_{\lf_{j_s}}$. By definition, there exists $t<s$ and a link between $\rf_{j_s}$ and $\rf_{j_t}$. For simplicity write $j_t:=i$ and $j_s:=j$, then using \textbf{Step 4} (a) and noting that $\lf_{j}\in\Lc_{\rf_{j}}$, we see that $z_{\lf_{j}}$ belongs to a ball of radius $N_\rf+\widetilde{N}_{\rf_{j}}$, centered at some point in $\{z_\lf:\lf\in\Lc_{\rf_{i}}\}$. Note that the values $z_\lf$ for $\lf\in\Lc_{\rf_{i}}$ have been fixed, using also \textbf{Step 4} (b), we see that $z_{\lf_{j}}$ belongs to the union of at most $3(1+\widetilde{N}_{\rf_i}/N_\rf)$ \emph{fixed} balls of radius $2(N_\rf+\widetilde{N}_{\rf_{j}})$. Therefore the number of choices of $z_{\lf_j}$ is at most 
\begin{equation}\label{eq.count_5}
CN_\rf^4\cdot\bigg(1+\frac{\widetilde{N}_{\rf_i}}{N_\rf}\bigg)\bigg(1+\frac{\widetilde{N}_{\rf_j}}{N_\rf}\bigg)^4.
\end{equation} Once $z_{\lf_j}$ is fixed, we then proceed to determine the values of $z_\lf$ for each $\lf\in\Lc_{\rf_j}$, which has $\Jc_0(\Tc_{\rf_j},N_\nf)$ choices. We next determine the value of $z_{\lf_{j_{s+1}}}$, and so on. Altogether we get that
\begin{equation}\label{eq.count_6}
\Jc_0(\Tc,N_\nf)\leq\prod_{j=1}^q\Jc_0(\Tc_{\rf_j},N_\nf)\cdot C^{q-1}N_\rf^{4(q-1)}\cdot\prod_{j=1}^q\bigg(1+\frac{\widetilde{N}_{\rf_j}}{N_\rf}\bigg)^4\cdot\sum_{\Hc}\prod_{i=1}^q\bigg(1+\frac{\widetilde{N}_{\rf_i}}{N_\rf}\bigg)^{d(i)},
\end{equation} where the summation is taken over all link trees $\Hc$, and $d(i)$ is the degree of $\rf_i$ in $\Hc$. Here note that each factor $(1+ \widetilde{N}_{\rf_j}/N_\rf)^4$ occurs at most once in (\ref{eq.count_6}), because the values $j=j_s$ in (\ref{eq.count_5}) is different for each iteration; moreover each factor $(1+ \widetilde{N}_{\rf_i}/N_\rf)$ occurs at most $d(i)$ times in (\ref{eq.count_6}), as this factor occurs in (\ref{eq.count_5}) only if $i=j_t$ is such that $\rf_i$ has a link with $\rf_j=\rf_{j_s}$ in $\Hc$.

Now we proceed with the factors in (\ref{eq.count_6}). We may replace $d(i)$ by $d(i)-1$ in the summation in (\ref{eq.count_6}), and replace $(1+ \widetilde{N}_{\rf_j}/N_\rf)^4$ in (\ref{eq.count_6}) by $(1+ \widetilde{N}_{\rf_j}/N_\rf)^5$. First
\[\prod_{j=1}^q\bigg(1+\frac{\widetilde{N}_{\rf_j}}{N_\rf}\bigg)^5\leq \exp\bigg(5\sum_{j=1}^q\frac{\widetilde{N}_{\rf_j}}{N_\rf}\bigg),\] then by the weighted Cayley Theorem we have
\[\sum_\Hc\prod_{i=1}^q\bigg(1+\frac{\widetilde{N}_{\rf_i}}{N_\rf}\bigg)^{d(i)-1}
=\bigg(q+\sum_{i=1}^q\frac{\widetilde{N}_{\rf_i}}{N_\rf}\bigg)^{q-2}
\leq
q^q\bigg(1+\frac{1}{q}\sum_{i=1}^q\frac{\widetilde{N}_{\rf_i}}{N_\rf}\bigg)^{q}\leq q^q\exp\bigg(\sum_{j=1}^q\frac{\widetilde{N}_{\rf_j}}{N_\rf}\bigg).\] Putting together (using also $q\geq 2$), we get that
\begin{equation}\label{eq.count_7}
\Jc_0(\Tc,N_\nf)\leq\prod_{j=1}^q\Jc_0(\Tc_{\rf_j},N_\nf)\cdot q!\cdot C^{q-1}N_\rf^{4(q-1)}\cdot \exp\bigg(6\sum_{j=1}^q\frac{\widetilde{N}_{\rf_j}}{N_\rf}\bigg).
\end{equation} 

\textbf{Step 6.} In \textbf{Step 5} we discussed how to bound $\Jc_0(\Tc,N_\nf)$. Bounding $\Jc_{01}(\Tc,N_\nf)$ is similar. Recall $\ff_0\in \Lc_{\rf_a}$ and $\ff_1\in\Lc_{\rf_b}$; if $a=b$ then we get the same (\ref{eq.count_7}) but with $\Jc_{0}(\Tc_{\rf_a},N_\nf)$ replaced by $\Jc_{01}(\Tc_{\rf_a},N_\nf)$. If $a\neq b$, then at step $s$ where $j_s=b$, the number of choices for $z_{\lf_b}$ is $1$ instead of (\ref{eq.count_5}) because $z_{\lf_b}=z_1$ is fixed. This leads to the same (\ref{eq.count_7}) but with $N_\rf^{4(q-1)}$ replaced by $N_\rf^{4(q-1)}\cdot N_\rf^{-4}$.

By iteration, it then follows that (where $r$ is the number of leaves without multiplicity)
\begin{equation}\label{eq.count_8}\Jc_0(\Tc,N_\nf)\leq C^{r-1}\prod_{\nf\in\Bc}\gamma_\nf!\cdot\prod_{\nf\in\Bc} N_\nf^{4(\gamma_\nf-1)}\cdot\exp\bigg(6\sum_{\nf\in\Bc}\sum_{j}\frac{\widetilde{N}_{\nf_j}}{N_\nf}\bigg),
\end{equation} where $\nf_j$ in the last summation are all the child nodes of $\nf$ (if some $\nf_j$ is a leaf then the corresponding term disappears). Note that $\prod_\nf \gamma_\nf!\leq C^r|\mathrm{Aut}(\Tc)|$ by \textbf{Step 3}, and (note that $\gamma_\mf\geq 2$)
\begin{equation}\label{eq.hepp_sum}\sum_{\nf\in\Bc}\sum_{j}\frac{\widetilde{N}_{\nf_j}}{N_\nf}=\sum_{\nf\in\Bc}\sum_j\sum_{\mf\leq \nf_j}\frac{\gamma_\mf\cdot N_\mf}{N_\nf}=\sum_{\nf\in\Bc}\sum_{\mf<\nf}\frac{\gamma_\mf\cdot N_\mf}{N_\nf}=\sum_{\mf\in\Bc}\gamma_\mf\sum_{\nf>\mf}\frac{N_\mf}{N_\nf}\leq \sum_{\mf\in\Bc}\gamma_\mf\leq \sum_{\mf\in\Bc}2(\gamma_\mf-1)\leq 2r.\end{equation} Here, note that for fixed $\mf$, all the ancestors $\nf>\mf$ form an increasing chain, and the corresponding $N_\nf$ form a strictly increasing chain of powers of $2$ (recall that $N_{\nf^+}>N_\nf$ in Definition \ref{def.hepp}), so $\sum_{\nf>\mf}\frac{N_\mf}{N_\nf}\leq 1$ by geometric summation. Putting together, we get
\begin{equation}\label{eq.count_9}\Jc_0(\Tc,N_\nf)\leq C^{r-1}C^re^{12r}\cdot|\mathrm{Aut}(\Tc)|\cdot\prod_{\nf\in\Bc} N_\nf^{4(\gamma_\nf-1)},
\end{equation} which proves (\ref{eq.count_2}).

Finally, for $\Jc_{01}(\Tc,N_\nf)$ we may assume $\ff_0\neq\ff_1$. This can be bounded by the same iteration, except that we have one gain $N_\nf^{-4}$ at exactly one step of the iteration (namely, when we look at the tree rooted at $\ff_0\vee\ff_1$), so instead of (\ref{eq.count_9}) we get
\begin{equation}\label{eq.count_10}\Jc_{01}(\Tc,N_\nf)\leq C^{r-1}C^re^{12r}\cdot|\mathrm{Aut}(\Tc)|\cdot\prod_{\nf\in\Bc} N_\nf^{4(\gamma_\nf-1)}\cdot N_{\ff_0\vee \ff_1}^{-4}.
\end{equation} But by \textbf{Step 4} (a) we have \[\langle z_0-z_1\rangle=\langle z_{\ff_0}-z_{\ff_1}\rangle\leq 1+\widetilde{N}_{\ff_0\vee\ff_1}\leq O(r^2)\cdot N_{\ff_0\vee\ff_1},\] and the $O(r^2)$ factor can be absorbed by $C^r\leq C^n$, so this proves (\ref{eq.count_2.1}).

{\it Example.} To illustrate the counting in \textbf{Step 5}, fix $\Tc$ where $\rf$ has three children  $\rf_1,\rf_2,\rf_3$, and each $\rf_i$ has $6$ children all being leaves. Fix $N_{\rf}=8$, $N_{\rf_i}=4$.  
Let $(\lf_1=f_0,\lf_2,\lf_3)$ be  the given or the leftmost leaves. Let the link tree $\Hc$ be $\rf_2 - \rf_1 - \rf_3$.
Let $(j_1,j_2,j_3)=(1,2,3)$, namely, we first ``place'' the $6$ leaves of $\Tc_{\rf_1}$ into $\Zb^4$, then $\Tc_{\rf_2}$, and then $\Tc_{\rf_3}$. In the following, suppose the values of $z_{\lf}$ for each $\lf\in \Lc_{\rf_1}$ have been fixed (the left cluster), 
covered by at most $3(1+\frac{\widetilde N_{\rf_1}}{N_\rf})$ (showing two in the picture) balls of radius $N_{\rf}$;
we next determine $z_{\lf_2}$. With $\Hc$, there exists some $z_{\lf_2'}$  ($\lf_2'\in \Lc_{\rf_2}$), within distance $N_\rf$ from some point in the left cluster, and $|z_{\lf_2} - z_{\lf_2'}| \le \widetilde N_{\rf_2}$. Clearly, the number of choices of $z_{\lf_2}$ is at most \eqref{eq.count_5} (with $i=1$, $j=2$ therein).
\[
\begin{tikzpicture}[scale=0.15]
\node[dot] at (-19,1) {};
\node[dot] at (-15.5,0) {};
\node[dot] at (-12,1.3) {};
\node[dot] at (-9,0) {};
\node[dot] at (-6,1) {};
\node[dot] at (-2.5,0) {};
\draw[gray,dashed] (-15,1) circle (8);
\draw[gray,dashed] (-6,1) circle (8);
\draw[<->] (-24,-10) -- (0,-10); \node at (-12,-12) {$\widetilde N_{\rf_1}=24$};
\draw[<->] (4,-10) -- (28,-10); \node at (16,-12) {$\widetilde N_{\rf_2}=24$};

\node[dot] at (5.4, -0.3) {};  \node at (5.8, -2) {$z_{\lf_2'}$};  
\draw[<->] (-2,0) -- (5, -0.3); 
\node at (1, -1.5) {\scriptsize $\le \!\! N_{\rf}$};
\node[dot] at (18, 3) {}; 
\draw[<->] (6,-0.3) -- (17.5, 3); 
\node at (12, 3.2) {\scriptsize $\le \!\! \widetilde N_{\rf_2}$};
\node at (20, 3) {$z_{\lf_2}$};  
\end{tikzpicture}
\]

\subsection{Proof of Proposition \ref{prop.main_3}}\label{sec.proof_main_3}  Recall that $\gamma_\nf$ in (\ref{eq.vol_3}) is the number of children of $\nf$ (Definition \ref{def.hepp_2}) without counting multiplicities of leaves. We first introduce some variants of it where full or partial multiplicities are counted. Also, the proof of Proposition \ref{prop.main_3} is done by induction, where we will reduce $\Tc$ to smaller trees. In this process we will need to introduce certain new notations, including the \emph{simple} and \emph{compound} leaves, and associated quantities.
\begin{definition}\label{def.mul} Consider a Hepp tree $\Tc$ with multiplicities $m_\lf$. We shall divide its leaves into two \emph{types}, \emph{simple} and \emph{compound}, and denote the simple and compound leaf sets by $\Lc^S$ and $\Lc^C$. Define the quantities
\begin{equation}\label{eq.mul_1}\gamma_\nf^2:=\gamma_\nf+\sum_{\lf\in\Lc^S\cap\Cc_\nf}1+\sum_{\lf\in\Lc^C\cap\Cc_\nf}(m_\lf-1);\qquad \gamma_\nf^\infty:=\gamma_\nf+\sum_{\lf\in\Lc\cap\Cc_\nf}(m_\lf-1)
\end{equation} (recall $\Cc_\nf$ is the set of children of $\nf$). In other words, $\gamma_\nf^2$ is the number of children of $\nf$ with each simple leaf counted with \emph{multiplicity 2} and each compound leaf counted with full multiplicities $m_\lf$, while $\gamma_\nf^\infty$ is the number of children of $\nf$ with each leaf (simple or compound) counted with full multiplicities $m_\lf$.

For the initial Hepp tree in Proposition \ref{prop.main_3} we take $\Lc^C=\varnothing$, so in $\gamma_\nf^2$ we just count every leaf with multiplicity $2$. Note that in Proposition \ref{prop.main_3} the multiplicity $m_\lf$ is always even; throughout the proof below, we will introduce new leaves whose multiplicity $m_\lf$ may not be even, but they will always be at least two (Definition \ref{def.hepp}). In particular, we always have $\gamma_\nf^2\leq\gamma_\nf^\infty$ in (\ref{eq.mul_1}).
\end{definition}

It is easy to verify that, for $\Lc^C=\varnothing$, the product $\prod_{\nf\in\Bc} N_\nf^{-4(\gamma_\nf-1)}\cdot N_\rf^{-2}$ in (\ref{eq.vol_3}) can be rewritten as
\begin{equation}\label{eq.sum_1}\prod_{\nf\in\Bc} N_\nf^{-4(\gamma_\nf-1)}\cdot N_\rf^{-2}=\prod_{\nf\in\Bc}N_\nf^{-2(\gamma_\nf^2-1)}\prod_{\nf\in\Bc\backslash\{\rf\}}\bigg(\frac{N_\nf}{N_{\nf^+}}\bigg)^2
\end{equation} (the meaning of each factor in (\ref{eq.sum_1}) will become clear later in the proof). The proof of (\ref{eq.sum_1}) follows from counting the power of each $N_\nf$ on both sides, which are
\[-4(\gamma_\nf-1)-2\cdot\mathbf{1}_{\nf=\rf}\quad\textrm{for LHS};\qquad-2(\gamma_\nf-1)-2\cdot\#(\Lc\cap\Cc_\nf)+2-2\cdot\mathbf{1}_{\nf=\rf}-2\cdot\#(\Bc\cap\Cc_\nf)\quad\textrm{for RHS},\] and noticing that they are equal because $\gamma_\nf=|\Cc_\nf|$.

In the proof of Proposition \ref{prop.main_3} below, we will start from the initial Hepp tree (with all simple leaves), and each time collapse a suitable subtree rooted at a branching node $\nf$ into a \emph{compound} leaf with certain multiplicity. Therefore, to prove Proposition \ref{prop.main_3} inductively, we need to generalize it to a version which allows for compound leaves; such generalization is stated below.

\begin{prop}\label{prop.main2_2}
Let $(\Tc,N_\nf,m_\lf)$ be a fixed marked Hepp tree with multiplicities $m_\lf\geq 2$, and with both simple and compound leaves (Definition \ref{def.mul}). Let $(y_1,\cdots,y_m)$ be a fixed vector (where $m=\sum_\lf m_\lf$), such that the multiset $\{y_1,\cdots,y_m\}=\cup_\lf \{\textrm{$m_\lf$ copies of $z_\lf$}\}$, where $(z_\lf)$ satisfies Definition \ref{def.hepp_3} (a); moreover, instead of Definition \ref{def.hepp_3} (b), we assume that for each $\nf\in\Bc$ and two leaves $\lf,\lf'\in\Lc_\nf$, we have \begin{equation}\label{eq.gap_3}|z_{\lf}-z_{\lf'}|\leq\sum_{\mf\leq\nf}\gamma_\mf^\infty\cdot N_\mf.
\end{equation}

Fix absolute constants $C_0>1000$ and $D=\exp(C_0^{10})$. Then, for any $(\Tc,N_\nf,m_\lf)$, and under all the above assumptions, we have
\begin{multline}\label{eq.vol_3_2}\sum_{\pi}^{(*)}\prod_{j=1}^{m-1}\frac{1}{\langle y_{\pi(j)}-y_{\pi(j+1)}\rangle^2}\leq C_0^mD^{|\Bc|}\sum_{W\subseteq\Bc\backslash\{\rf\}}((m-2|W|)!)^{1/2}\\\times\prod_{\lf\in\Lc^S}(m_\lf!)^{1/2}\prod_{\lf\in\Lc^C}(m_\lf!)^{3/4}\prod_{\nf\in\Bc} N_\nf^{-2(\gamma_\nf^2-1)}\prod_{\nf\in\Bc\backslash\{\rf\}}\bigg(\frac{N_\nf}{N_{\nf^+}}\bigg)^2\prod_{\nf\not\in W}\frac{N_\nf}{N_{\nf^+}}.
\end{multline} Here $\pi\in S_m$ in (\ref{eq.vol_3_2}) (which is identified with a permutation of $m_\lf$ distinct copies of each leaf $\lf$) and satisfies the primitivity condition in Proposition \ref{prop.main_3}. In addition, we restrict that there is no $j$ such that $\pi(j)$ and $\pi(j+1)$ are two copies of \emph{the same leaf}.
\end{prop}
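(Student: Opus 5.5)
We prove (\ref{eq.vol_3_2}) by induction on $|\Bc|$, following the strategy sketched in Section \ref{sec.idea_3}.

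\textbf{Base case.} If $\Tc$ is a single leaf, the primitivity condition and the no-adjacent-copies restriction leave nothing (or a trivial case) to check.

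\textbf{Choosing a subtree to collapse.} For the inductive step we pick a branching node $\nf\neq\rf$ whose subtree $\Tc_\nf$ consists of \emph{comparable} scales. Concretely:
\begin{enumerate}[{(a)}]
\item $N_\mf\geq N_\nf/K$ for all $\mf\in\Bc_\nf$, with $K$ a large power of $C_0$;
\item $N_{\nf^+}\geq K'N_\nf\sum_{\mf\leq\nf}\gamma^\infty_\mf$, where $K'$ is some fixed power of $C_0$.
\end{enumerate}
If no such $\nf$ exists, every parent–child scale gap is bounded. In that case the whole tree is essentially a single scale, and we apply the single-scale estimate directly to $\Tc$ (Proposition \ref{prop.main_4}, as in the overview). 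The factor $D^{|\Bc|}$ is designed to absorb the bounded scale ratios, at a cost of $D$ per node.

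Otherwise, condition (b) and (\ref{eq.gap_3}) ensure that all points $z_\lf$ with $\lf\in\Lc_\nf$ lie within $\ll N_{\nf^+}$ of each other. Hence, for any leaf outside $\Tc_\nf$, the factor $\langle y-z_\lf\rangle^{-2}$ is comparable, up to a constant $C^{\#}$, to the same factor with $z_\lf$ replaced by a single representative point.

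\textbf{Decomposing the permutation.} Given $\pi$, we read off the following data:
\begin{enumerate}[{(i)}]
\item the induced ordering $\pi_1$ of the $m_\nf:=\sum_{\lf\in\Lc_\nf}m_\lf$ copies of leaves of $\Tc_\nf$;
\item the set $O$ of positions where $\pi_1$ is cut into maximal consecutive blocks of $\pi$, say $k$ blocks;
\item a permutation $\pi_2$ of the reduced tree $\Tc'$, obtained by replacing $\Tc_\nf$ by a compound leaf $\lf_*$ with multiplicity $k$.
\end{enumerate}
When $k=1$, the leaf $\lf_*$ is not a genuine leaf of multiplicity $\geq 2$. We treat this case separately: $\nf\in W$, and it is handled as a merged point, with the interior product bounded by the single-block estimate.

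The products then factorize. The internal edges of the blocks give a $\pi_1$-product over $\Tc_\nf$ with cut set $O$. The remaining edges give, up to $C^{m}$, the product for $\pi_2$ on $\Tc'$. Primitivity of $\pi$ implies primitivity of $\pi_2$ on $\Tc'$, and it forbids consecutive copies of $\lf_*$, because merging two adjacent blocks contradicts maximality. The hypothesis (\ref{eq.gap_3}) for $\Tc'$ persists, since $\gamma^\infty$ of $\nf^+$ counts $\lf_*$ with multiplicity $k$.

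\textbf{Applying the two estimates.} The sum over $(\pi_1,O)$ is bounded by Proposition \ref{prop.main_4}, the one-scale estimate for $\Tc_\nf$. It should give
\[
((m_\nf-k)!)^{1/2}\prod_{\lf\in\Lc_\nf}(\cdots)\prod_{\mf\in\Bc_\nf}N_\mf^{-2(\gamma^2_\mf-1)},
\]
together with a gain $N_\nf/N_{\nf^+}$ per block beyond the first. Each extra block is an extra excursion out of $\Tc_\nf$, which produces $\langle\cdot\rangle^{-2}$ at scale $N_{\nf^+}$ instead of $N_\nf$. The sum over $\pi_2$ is handled by the induction hypothesis on $\Tc'$, where $\lf_*$ contributes $(k!)^{3/4}$.

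\textbf{Bookkeeping and the main obstacle.} The remaining work is to match factors:
\begin{enumerate}[{(1)}]
\item The powers of $N$ recombine via the $\gamma^2$ identity analogous to (\ref{eq.sum_1}).
\item The factor $(N_\nf/N_{\nf^+})^2$ arises from the $\lf_*$ edge.
\item The factor $N_\nf/N_{\nf^+}$ for $\nf\notin W$ arises exactly when $k\geq2$.
\item The factorials must combine: $((m_\nf-k)!)^{1/2}(k!)^{3/4}((m'-2|W'|)!)^{1/2}$ must be bounded by $C^{m_\nf}((m-2|W|)!)^{1/2}$. This needs multinomial inequalities of the type in Lemmas \ref{lem.aux1}--\ref{lem.aux2}, together with the surplus gain $(N_\nf/N_{\nf^+})^{k-1}$, which compensates the excess $(k!)^{1/4}$ once $K'$ is large.
\end{enumerate}
I expect step (4) to be the main obstacle. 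The difficulty is proving Proposition \ref{prop.main_4} sharply enough that only $C_0^{m_\nf}D$ is lost per collapse, with exponents $1/2$ and $3/4$ closing under iteration. Its core is the packing bound (\ref{eq.idea_10}) iterated along $\pi_1$, upgraded to a bilinear version for block endpoints.
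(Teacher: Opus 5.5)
Your architecture matches the paper's: collapse a scale-separated $\Tc_\nf$ into a compound leaf, factor $\pi\leftrightarrow(\pi_1,O,\pi_2)$, bound the $(\pi_1,O)$-sum by Proposition \ref{prop.main_4} with $R=N_{\nf^+}$, and bound the $\pi_2$-sum by induction. However, the bookkeeping that is supposed to close the induction has genuine gaps.

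\textbf{The $W$-rule is wrong.} The case $k=1$ never occurs. Since $\nf\neq\rf$, $\Lc_\nf$ is a proper set of leaves, so primitivity of $\pi$ forbids its copies from forming one block; hence $s:=|O|=k-1\geq 1$. Your rule ``$\nf\in W$ iff $k=1$, and the extra $N_\nf/N_{\nf^+}$ appears whenever $k\geq 2$'' therefore fails exactly in the log-divergent case $k=2$. For example, take leaves $a,b,c$ of multiplicity $2$, $\Tc_\nf=\{a,b\}$, and $\pi$ reading $(a,b,a,c,b,c)$; this is primitive, has no adjacent copies, and has $k=2$. Its summand is $\sim N_\nf^{-4}N_\rf^{-6}$, which is exactly $N_\nf^{-2(\gamma^2_\nf-1)}N_\rf^{-2(\gamma^2_\rf-1)}(N_\nf/N_\rf)^2$. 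There is no room for a further factor $N_\nf/N_\rf$, because $N_\rf/N_\nf$ is unbounded.

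What Proposition \ref{prop.main_4} actually gives, once its $R^{2s}=N_{\nf^+}^{2s}$ is cancelled by the $N_{\nf^+}^{-2s}$ from the increase of $\gamma^2_{\nf^+}$ by $s$ in $\Tc'$, is $(N_\nf/N_{\nf^+})^{\min(2s,3)}$:
\begin{itemize}
\item for $s=1$ you get only the mandatory square, so $\nf$ must be placed in $W$;
\item for $s\geq 2$ you get the cube, so $\nf\notin W$.
\end{itemize}
The factorials then combine as $((m_\nf-s)!)^{1/2}((m'-2|W|)!)^{1/2}\le m_\nf^2\,((m-2|W'|)!)^{1/2}$ with $|W'|=|W|+\mathbf{1}_{s=1}$. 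Proposition \ref{prop.main_4} provides no gain that grows with $k$, so the ``surplus $(N_\nf/N_{\nf^+})^{k-1}$'' you plan to spend on factorials is not available.

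\textbf{The missing $1/(s+1)!$ factor.} You are missing the mechanism that pays for the new leaf's $((s+1)!)^{3/4}$. In $\pi_2$ the $s+1$ copies of the collapsed leaf are indistinguishable, since their order is already encoded in $\pi_1$ and $O$. So $\pi\mapsto(\pi_1,O,\pi_2)$ is a bijection only with unlabeled copies, while the induction hypothesis sums over labeled copies. This yields an extra factor $((s+1)!)^{-1}$, so the new leaf costs $((s+1)!)^{-1/4}$. Then $D^{-1}C_0^{s+1}((s+1)!)^{-1/4}\le 1$ absorbs the $C_0^{s+1}$ coming from the $s+1$ new copies in $\Tc'$. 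This is why compound leaves can carry an exponent $3/4<1$, and it is what the factor $D$ per collapsed node is for — not absorbing bounded scale ratios.

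\textbf{The choice of $\nf$.} Your selection does not deliver the hypothesis (\ref{eq.local_2}) of Proposition \ref{prop.main_4}. Also, ``no such $\nf$ $\Rightarrow$ bounded gaps'' is false. Parent--child ratios are never bounded by a constant in general; they can be of order $\gamma^\infty_\nf$. With your normalization $N_\nf\sum_{\mf\le\nf}\gamma^\infty_\mf$ in (b), a chain of nodes can even have ratios of order $K'$ times the subtree size. Their product is then factorial and cannot be absorbed by $D^{|\Bc|}$.

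The right choice is the lowest non-root $\nf$ with $\sum_{\mf\le\nf}\gamma^\infty_\mf N_\mf\le N_{\nf^+}/8$. Minimality gives exactly (\ref{eq.local_2}) on $\Tc_\nf$, whose ratios multiply to at most $C^m$ by (\ref{eq.chain_2}), and it gives $R=N_{\nf^+}\ge N'_\nf$. If no such node exists, you apply Proposition \ref{prop.main_4} to all of $\Tc$ with $O=\varnothing$ and $W=\varnothing$.
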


First, let us prove that Proposition \ref{prop.main_3} follows from Proposition \ref{prop.main2_2}.
\begin{proof}[Proof of Proposition \ref{prop.main_3} assuming Proposition \ref{prop.main2_2}] We shall take $\Lc^C=\varnothing$ in Proposition \ref{prop.main2_2}, then (\ref{eq.gap_3}) follows from Definition \ref{def.hepp_3} (b) and \textbf{Step 4} in the proof of Proposition \ref{prop.hepp_3} in Section \ref{sec.proof_hepp_3} (and $\gamma_\nf\leq\gamma_\nf^\infty$). The only issue now is that in the summation in (\ref{eq.vol_3}), compared to (\ref{eq.vol_3_2}), it is possible for two copies of the same leaf to be adjacent. This can be solved by applying a simple argument: if several copies of the same leaf form a single block in the permutation, we can merge them into one big copy of this simple leaf (and decrease the multiplicity). More precisely, we argue as follows.

For each leaf $\lf$, let the set of its different copies be $A_\lf$ with $|A_\lf|=m_\lf$ (and $\sum_\lf m_\lf=2n$). Each permutation $\pi$ leads to a unique unordered partition $\Pc_\lf$ of $A_\lf$ (with the cardinalities forming an unordered partition $\pf_\lf$ of $m_\lf$), by collecting all different subsets of $A_\lf$ that form a continuous block in $\pi$. The (numerical) partition $\pf_\lf$ has $\leq 2^{m_\lf}$ choices. Once $\pf_\lf$ is fixed for each $\lf$, the permutation $\pi$ can be determined by the following inputs:
\begin{enumerate}[{(i)}]
\item The (set) partition $\Pc_\lf$ for each $\lf$, whose number of choices equals
\[\frac{m_\lf!}{a_{1}!\cdots a_{ s}!}\cdot\frac{1}{b_1!\cdots b_q!},\] where $s=s_\lf$ depends on $\lf$, and $(a_{1}, \cdots,a_{s})$ are the terms in the partition $\pf_\lf$ (so $\sum_j a_{j}=m_\lf$; these $a_j$ also depend on $\lf$ but we omit the $\lf$ subscript for simplicity), and $b_\ell$ is the number of terms $a_{j}$ that equal $\ell$ (so $\sum_\ell b_\ell=s$ and $\sum_\ell \ell b_\ell=m_\lf$).
\item 
The permutation $\widetilde{\pi}$ of the leaves of the same tree $\Tc$ with multiplicity, but with the copies of $\lf$ replaced by the \emph{sets of copies} in the partition $\Pc_\lf$ (so the multiplicity of $\lf$ is now equal to $s=s_\lf$). Moreover, in the permutation $\widetilde{\pi}$, by definition, no two copies of $\lf$ can be adjacent. \footnote{For example, consider three leaves $\alpha,\beta,\gamma$ with multiplicities $(m_\alpha,m_\beta,m_\gamma)=(4,2,2)$, and $\pi=(\alpha_1,\alpha_2,\beta_1,\gamma_1,\alpha_3,\beta_2,\gamma_2,\alpha_4)$ in which $\alpha_1,\alpha_2$ form a continuous block. Then in this case $\Pc_\alpha$ partitions $A_\alpha=\{\alpha_1,\alpha_2,\alpha_3,\alpha_4\}$ into $\{\{\alpha_1,\alpha_2\},\{\alpha_3\},\{\alpha_4\}\}$ and $\pf_\alpha$ partitions the integer $m_\alpha=4$ into $(2,1,1)$ (unordered). We have $\tilde\pi =(\tilde\alpha_{12},\beta_1,\gamma_1,\tilde\alpha_3,\beta_2,\gamma_2,\tilde\alpha_4)$ and new multiplicities $(s_\alpha,s_\beta,s_\gamma)=(3,2,2)$. Of course, there exist different choices of $\pi$ yielding different $\Pc_\alpha$ but with the same $\pf_\alpha$. Fixing $\pf_\alpha=(2,1,1)$ the number of choices of $\Pc_\alpha$ is $\frac{4!}{2!1!1!}\cdot\frac{1}{2!1!}=6$ since $(a_1,\cdots,a_{s_\alpha})=(2,1,1)$ which has $b_1=2$ incidences of $1$ and $b_2=1$ incidences of $2$; this is consistent with the fact that there are $6$ ways of choosing two elements from $A_\alpha$ to form $\Pc_\alpha$ for this fixed $\pf_\alpha$.}
\item 
Finally, a permutation of elements in each set $B\in\Pc_\lf$. The number of choices equals $a_1!\cdots a_s!$. Note that the product of factors in (i) and (iii) gives
\begin{equation}\label{eq.chain_7}\frac{m_\lf!}{b_1!\cdots b_q!}= \frac{m_\lf!}{s_\lf!}\cdot\frac{s!}{b_1!\cdots b_q!}\leq 2^{m_\lf}\frac{m_\lf!}{s_\lf!}
\end{equation} by the same proof as (\ref{eq.perm}) (with $2n$ and $r$ in (\ref{eq.perm}) replaced by $m_\lf$ and $s$).
\end{enumerate}

Putting altogether, we see that (note also that $\langle y_{\pi(j)}-y_{\pi(j+1)}\rangle=1$ if $\pi(j)$ and $\pi(j+1)$ are copies of the same leaf):
\begin{equation}\label{eq.chain_71}
\textrm{LHS of (\ref{eq.vol_3})}\leq 4^n\sup_{(\pf_\lf)}\prod_{\lf\in\Lc}\frac{m_\lf!}{s_\lf!}\cdot\sum_{\widetilde{\pi}}^{(*)}\prod_{j=1}^{m-1}\frac{1}{\langle y_{\widetilde{\pi}(j)}-y_{\widetilde{\pi}(j+1)}\rangle^2},
\end{equation} where $\widetilde{\pi}$ is the same as $\pi$ but with the multiplicity of $\lf$ being $s_\lf$ instead of $m_\lf$, and $m=\sum_\lf s_\lf$ (so $\widetilde{\pi}\in S_m$), and $(*)$ ensures no two copies of the same leaf in $\widetilde{\pi}$ are adjacent. Moreover, $\pi$ being primitive implies that $\widetilde{\pi}$ is also primitive (as each copy of $\lf$ in $\widetilde{\pi}$ is a continuous block of copies of $\lf$ in $\pi$); also since all copies of $\lf$ cannot form a single block in $\pi$, we know that $s_\lf\geq 2$ for each $\lf$.

Now, by applying Proposition \ref{prop.main2_2} with $m_\lf$ replaced by $s_\lf$, we get
\begin{multline}\label{eq.chain_6copy}
\sum_{\widetilde{\pi}}^{(*)}\prod_{j=1}^{m-1}\frac{1}{\langle y_{\widetilde{\pi}(j)}-y_{\widetilde{\pi}(j+1)}\rangle^2}\leq C^{m}\sup_{(\pf_\lf)}\sum_{W\subseteq\Bc\backslash\{\rf\}}((m-2|W|)!)^{1/2}\\
\times\prod_{\lf\in\Lc}(s_\lf!)^{1/2}\prod_{\nf\in\Bc} N_\nf^{-2(\gamma_\nf^2-1)}\prod_{\nf\in\Bc\backslash\{\rf\}}\bigg(\frac{N_\nf}{N_{\nf^+}}\bigg)^2\prod_{\nf\not\in W}\frac{N_\nf}{N_{\nf^+}}.
\end{multline} Note that the powers of $N_\nf$ on the right hand side match those in (\ref{eq.vol_3}) in view of (\ref{eq.sum_1}), so we only need to compare the factorial factors for fixed choices of $(\pf_\lf)$ and $W$. For this, note that $m=\sum_\lf s_\lf$ and $2n=\sum_\lf m_\lf$; moreover from $|W|\leq |\Bc|\leq r$ (where $r$ is the number of leaves without multiplicity) and $s_\lf\geq 2$ we know that $m\geq 2|W|$. We then have (using $\binom{a}{b}\leq 2^a$)
\begin{align}
((m-2|W|)!)^{1/2}\prod_\lf (s_\lf!)^{1/2}\prod_\lf\frac{m_\lf!}{s_\lf!}&=((2n-2|W|)!)^{1/2}\prod_\lf (m_\lf!)^{1/2}\cdot\prod_\lf\bigg(\frac{m_\lf!}{s_\lf!}\bigg)^{1/2}\cdot\bigg(\frac{(m-2|W|)!}{(2n-2|W|)!}\bigg)^{1/2}\nonumber\\
&\leq C^n\cdot(n-|W|)!\prod_\lf (m_\lf!)^{1/2}\cdot\bigg(\prod_\lf (m_\lf-s_\lf)!\cdot\frac{(m-2|W|)!}{(2n-2|W|)!}\bigg)^{1/2}\nonumber\\
\label{eq.sum_2}&\leq C^n\cdot(n-|W|)!\prod_\lf (m_\lf!)^{1/2}.
\end{align}
By putting together (\ref{eq.chain_71}), (\ref{eq.chain_6copy}) and (\ref{eq.sum_2}), we have proved (\ref{eq.vol_3}).
\end{proof}
\subsubsection{The inductive step}\label{sec.induct} For the rest of this subsection we prove Proposition \ref{prop.main2_2}. As discussed above, we shall induct by collapsing a suitable subtree $\Tc_\nf$ into a compound leaf with certain multiplicity, thus reducing $\Tc$ to a smaller tree $\Tc'$. For this to work, we still need an estimate controlling the summation within $\Tc_\nf$, which is provided below.

\begin{prop}\label{prop.main_4} Recall $C_0$ in Proposition \ref{prop.main2_2}. In the same setting there, let $(\Tc,N_\nf,m_\lf)$, and the simple/compound leaf types, be fixed. Also fix $(y_1,\cdots,y_m)$ such that $m=\sum_\lf m_\lf$ and the multiset $\{y_1,\cdots,y_m\}=\cup_\lf \{\textrm{$m_\lf$ copies of $z_\lf$}\}$, where $(z_\lf)$ satisfies Definition \ref{def.hepp_3} (a). In addition, assume that
\begin{enumerate}[{(a)}]
\item For each $\nf\in\Bc\backslash\{\rf\}$ we have (recall $\gamma_\nf^\infty$ and $\gamma_\nf^2$ defined in (\ref{eq.mul_1}))
\begin{equation}\label{eq.local_2}
N_{\nf^+}\leq 8N_\nf':=8\sum_{\mf\leq\nf}\gamma_\mf^\infty\cdot N_\mf;
\end{equation}
\item We also fix a dyadic number $R$ such that $R\geq N_\rf'$ where $\rf$ is the root of $\Tc$ and $N_\rf'$ is defined as in (\ref{eq.local_2});
\item We also fix a set of \emph{skipped} indices $O\subseteq\{1,\cdots,m-1\}$, with $|O|=s$.
\end{enumerate} Then, we have
\begin{multline}\label{eq.local_3}\sum_{\pi}^{(*)}\prod_{\substack{j=1\\j\not\in O}}^{m-1}\frac{1}{\langle y_{\pi(j)}-y_{\pi(j+1)}\rangle^2}\leq C_0^{m/2}((m-s)!)^{1/2}\prod_{\lf\in\Lc^S}(m_\lf!)^{1/2}\prod_{\lf\in\Lc^C}(m_\lf!)^{3/4}\cdot\prod_{\nf\in\Bc} N_\nf^{-2(\gamma_\nf^2-1)}\cdot R^{2s}\\
\times\prod_{\nf\in\Bc\backslash\{\rf\}}\bigg(\frac{N_\nf}{N_{\nf^+}}\bigg)^{3}\cdot (N_\rf/R)^{\min(2s,3)}.
\end{multline} Here $\pi\in S_m$ (identified with a permutation of $m_\lf$ distinct copies of each leaf $\lf$) and satisfies that, there is no $j\not\in O$ such that $\pi(j)$ and $\pi(j+1)$ are
two copies of the same leaf. We do not require $\pi$ to be primitive.
\end{prop}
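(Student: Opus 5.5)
The plan is to build $\pi$ one position at a time, from left to right, and to bound each step by a sharp local sum. This turns the left-hand side of (\ref{eq.local_3}) into a product of $m-1$ local factors. Suppose $\pi(1),\dots,\pi(j)$ are already fixed, and let $\Sigma_j$ be the multiset of copies not yet used.

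\begin{itemize}
\item If $j\in O$, the weight $\langle y_{\pi(j)}-y_{\pi(j+1)}\rangle^{-2}$ is absent. We simply count the choices of $\pi(j+1)$, of which there are at most $|\Sigma_j|$. Each such step is charged one factor $R^{2}\cdot R^{-2}$. The $R^{2}$ goes to the $R^{2s}$ in (\ref{eq.local_3}). The $R^{-2}$ is compared with the normalization $\prod N_\nf^{-2(\gamma_\nf^2-1)}$ through $R\ge N_\rf'$.
\item If $j\notin O$, we use a scale-refined, multiplicity-weighted version of (\ref{eq.idea_10}):
\[\sup_{x}\sum_{y\in\Sigma_j,\ y\neq x}\langle x-y\rangle^{-2}.\]
\end{itemize}

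To bound this local sum, I group the candidates $y=z_{\lf'}$ according to the node $\nf=\lf\vee\lf'$, where $\lf$ is the leaf of $\pi(j)$.
\begin{itemize}
\item By Definition \ref{def.hepp_3}(a), $|z_\lf-z_{\lf'}|\ge N_\nf/2$.
\item The points in a given child subtree $\Tc_{\nf_i}$ are $\gtrsim N_{\nf_i}$-separated. Assumption (a) says all scales inside a subtree are comparable up to the factor $8N'_\nf/N_\nf$.
\item So the packing argument of (\ref{eq.idea_10}) in dimension $4$ bounds the contribution of node $\nf$ by roughly $(\#\text{remaining copies below }\nf)^{1/2}N_\nf^{-2}$, times a loss polynomial in $N'_\nf/N_\nf$.
\end{itemize}

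Multiplying these local bounds over all non-skipped steps, the remaining counts run through $m-s$ decreasing values. This produces $((m-s)!)^{1/2}$, exactly as in the toy bound (\ref{eq.idea_9}). The $N_\nf^{-2}$ factors recombine into $\prod N_\nf^{-2(\gamma_\nf^2-1)}$. The bookkeeping for this is the following. Each transition into a child subtree of $\nf$ costs $N_\nf^{-2}$. A simple leaf needs at least two visits. A compound leaf is visited $m_\lf$ times, and its consecutive copies are never adjacent across non-skipped indices. This is precisely how $\gamma_\nf^2$ counts children.

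Repeated copies of a leaf are handled by not distinguishing copies when counting. We sum over the sequence of leaves rather than copies, and then reattach a factor $m_\lf!$ for labeling. Part of this is absorbed by the $(\cdot)^{1/2}$ gain, because copies of a single leaf coincide and cannot use the packing bound. This is what leaves $(m_\lf!)^{1/2}$ for simple leaves and the weaker $(m_\lf!)^{3/4}$ for compound leaves, which have no two-copy structure. The multinomial comparisons here would use the same argument as (\ref{eq.perm}) and (\ref{eq.chain_7}), giving only $C^m$ losses.

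The factors $(N_\nf/N_{\nf^+})^{3}$ and $(N_\rf/R)^{\min(2s,3)}$ are extra gains of the following kind. When the walk moves between distinct children of $\nf^+$ it pays $N_{\nf^+}^{-2}$. We bound this instead by $N_\nf^{-2}$ times the gain, and by (a) the ratio costs at most $(8N'_\nf/N_\nf)^{3}$. The same is done at the root with $R$ in place of $N_{\rf^+}$, using only the first few skipped steps.

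The main obstacle I expect is keeping every loss exponential in $m$. Each node can lose a power of $N'_\nf/N_\nf=\sum_{\mf\le\nf}\gamma^\infty_\mf N_\mf/N_\nf$, and a naive product over nodes is super-exponential. I would control this as in (\ref{eq.hepp_sum}). First use $(1+x)^k\le e^{kx}$. Then use the strict dyadic growth $N_{\nf^+}>N_\nf$ to sum $\sum_{\nf>\mf}N_\mf/N_\nf\le1$. This bounds the total exponent by $O(\sum\gamma^\infty_\mf)=O(m)$, which the constant $C_0^{m/2}$ can absorb.
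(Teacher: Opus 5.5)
There is a genuine gap, located in the repeated copies, which is exactly where the difficulty of Proposition \ref{prop.main_4} lies.

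Your local bound $(\#\textrm{remaining copies below }\nf)^{1/2}N_\nf^{-2}$ is false once copies coincide. A single leaf $\lf'$ with $X$ unused copies at distance $\sim N$ from the current point already contributes $XN^{-2}$. Your fallback sums over sequences of \emph{leaves}, uses a per-step packing bound $r_j^{1/2}N^{-2}$ with $r_j$ the number of distinct leaves still available, and then reattaches $\prod_\lf m_\lf!$. This scheme cannot see that each leaf is used exactly $m_\lf$ times, and it loses super-exponentially when multiplicities are unequal.

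Here is a concrete example. Let the root have as children one simple leaf $a$ with $m_a=X$ and $X/2$ simple leaves $b_i$ with $m_{b_i}=2$. Take $N_\rf=2$, the $z_{b_i}$ packed at unit spacing around $z_a$, $s=0$ and $m=2X$; all powers of $N_\rf$ are then harmless $C^{\pm X}$ factors. The right side of (\ref{eq.local_3}) is $X^{3X/2}$ up to $C^{\pm X}$. In your scheme, all $b_i$ may still be available during the first $3X/2$ steps, and there the packing bound $\sim(X/2)^{1/2}$ is sharp. Together with $\prod_\lf m_\lf!\geq X!$, your bound is at least $X!\,(X/2)^{3X/4}\approx X^{7X/4}$. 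Relatedly, ``the remaining counts run through $m-s$ decreasing values'' holds for copies, where no square-root gain is available, but not for distinct leaves, where it is.

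The paper avoids this in two ways.

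First, it reduces \emph{once, globally} to leaf-parent scales. By (a) and the geometric-sum argument you cite, $\prod_{\nf\neq\rf}N_{\nf^+}/N_\nf\leq C^m$; this is (\ref{eq.chain_2}), and this part of your plan does match the paper. The identity (\ref{eq.chain_3.5}) then turns the target into (\ref{eq.chain_6}). Locally, only $|z_\lf-z_{\lf'}|\geq\frac12\max(N_{\lf^+},N_{\lf'^+})$ is used. In your plan, the $N_{\nf_i}$-separation you claim inside $\Tc_{\nf_i}$ is not available: Definition \ref{def.hepp_3}(a) gives only $\frac12N_{\lf\vee\lf'}$. Moreover, losses polynomial in $N'_\nf/N_\nf\geq\gamma^\infty_\nf$ paid at every step through $\nf$, possibly $\gtrsim\gamma^\infty_\nf$ times, are not absorbed by (\ref{eq.hepp_sum}), which counts each node once.

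Second, and this is the missing idea, the paper groups leaves into dyadic classes $(N,X)$ with $N=N_{\lf^+}$ and $m_\lf\in[X,2X)$, each class containing $\sim Y$ leaves. It fixes the class sequence $\vX=(N_j,X_j)$ before summing over $\pi$. With $\vX$ fixed:
\begin{itemize}
\item each step costs $X_{j+1}Y_{j+1}^{1/2}N_{j+1}^{-2}$, with copies counted linearly and distinct leaves by packing;
\item skipped steps, weighted by $R^{-2}$ and paid back by $R^{2s}$, cost at most the non-skipped cost times $(XY)^{-1/2}$, since $R/N\gtrsim XY$ for every class;
\item the factorials come from $(XY^{1/2})^{m_{NX}}(XY)^{-s_{NX}/2}\leq C^{m_{NX}}((m_{NX}-s_{NX})!)^{1/2}\prod_{\lf\in\Lc_{N,X}}(m_\lf!)^{1/2}$.
\end{itemize}
The price is the sum over $\vX$, a product of multinomial coefficients that is super-exponential in general. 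It is controlled by a decay factor $\min(1,(P_{j+1}/P_j)^{\theta})$ with $P=YN^4$. This factor comes from the two-variable packing estimate (\ref{eq.aux8}) of Lemma \ref{lem.aux3} and is summed with Lemma \ref{lem.aux2}. Lacunarity (Lemma \ref{lem.aux1}) and the slack $(m_\lf!)^{1/4}$ for compound leaves and $N_{\lf^+}^{2(m_\lf-2)}$ for simple ones handle the rest. This is where the exponent $3/4$, and the counting of simple leaves only twice in $\gamma^2_\nf$, actually come from; nothing in your proposal plays this role.
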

\begin{proof}[Proof of Proposition \ref{prop.main2_2} assuming Proposition \ref{prop.main_4}] We prove by induction. Suppose Proposition \ref{prop.main2_2} is true for smaller trees $\Tc'$ (the base case follows from the same argument), now we prove it for $\Tc$. This is divided into 3 steps.

\textbf{Step 1.} Given $(\Tc,N_\nf,m_\lf)$ and $(y_j)$, we choose a \emph{lowest} non-root branching node $\nf$ such that
\begin{equation}\label{eq.gap_4}\sum_{\mf\leq\nf}\gamma_\mf^\infty\cdot N_\mf\leq N_{\nf^+}/8.
\end{equation} This $\nf$ is uniquely determined by $(\Tc,N_\nf,m_\lf)$ and $(y_j)$ (if there exist more than one then we may fix one of them). Note that if $\nf$ does not exist, then we can argue similarly as the main proof below but apply Proposition \ref{prop.main_4} to the whole tree $\Tc$ with $O=\varnothing$, which will directly imply Proposition \ref{prop.main2_2} (choosing $W=\varnothing$ in (\ref{eq.vol_3_2})).

Consider the subtree $\Tc_\nf$ rooted at $\nf$, then the lowest assumption of $\nf$ (so that (\ref{eq.gap_4}) cannot hold for anything strictly lower than $\nf$) implies that (\ref{eq.local_2}) holds for each non-root branching node for this subtree. Moreover, if we choose $R=N_{\nf^+}$ in the assumption (b) below (\ref{eq.local_2}), then (\ref{eq.gap_4}) also implies that $R\geq N_\rf'$ (where $\rf=\nf$ is the root of $\Tc_\nf$).

In addition, consider any leaves $\lf,\lf'\in\Tc_\nf$ and $\yf\not\in\Tc_\nf$, then Definition \ref{def.hepp_3} (a) and (\ref{eq.gap_3}) and (\ref{eq.gap_4}) imply that \[|z_\yf-z_\lf|\geq N_{\nf^+}/2\geq 4\sum_{\mf\leq \nf}\gamma_\mf^\infty\cdot N_\mf\geq4|z_\lf-z_{\lf'}|,\] from which we have that 
\begin{equation}\label{eq.local_4}
\frac{1}{2}|z_\yf-z_{\lf}|\leq|z_\yf-z_{\lf'}|\leq 2|z_\yf-z_{\lf}|.
\end{equation}

\textbf{Step 2.} Now consider a permutation $\pi$ of all leaves in $\Lc$ (with multiplicities $m_\lf$), which is \emph{primitive} and has \emph{no adjacent copies} of the same leaf. Such a permutation can be formed in the following three steps:
\begin{enumerate}[{(1)}]
\item Choose a permutation $\pi_1$ of all leaves in $\Lc_\nf$ (with multiplicities $m_\lf$);
\item Choose a skipped set $O\subseteq\{1,\cdots,n-1\}$ with $|O|=s$ (where $n=\sum_{\lf\in\Lc_\nf}m_\lf$ is the total number of leaves in $\Lc_\nf$ with multiplicities), and break $\pi_1$ into $s+1$ blocks by separating those before $j$ and those after $j+1$ for each $j\in O$;
\item Finally, insert the leaves in $\Lc\backslash\Lc_\nf$ into the spaces between these $s+1$ blocks.
\end{enumerate}

Suppose $\pi_1$ and $O$ are fixed, we can define a tree $\Tc'$ with leaf set $\Lc'$, and a permutation $\pi_2$ of leaves in $\Lc'$ with multiplicities, as well as the values $(z_\lf)_{\lf\in\Lc'}$, as follows:
\begin{itemize}
\item First, $\Tc'$ is formed by collapsing $\Tc_\nf$ to a single leaf $\nf$, and the marking $(N_\mf)$ of $\Tc'$ is inherited from $\Tc$. The multiplicity and type for leaves in $\Lc'\backslash\{\nf\}$ are inherited from $\Tc$, and $\nf$ is defined to be a compound leaf in $\Tc'$, with multiplicity $s+1$.
\item The permutation $\pi_2$ of all leaves in $\Lc'$ with multiplicities, is formed by collapsing each of the $s+1$ blocks in (2) above into the single leaf $\nf\in\Lc'$. 
\item Note that in $\pi_2$, the $s+1$ different copies of $\nf$  are \emph{viewed as identical} (so switching between them does not change $\pi_2$).
\item Finally, the value of $z_\lf$ for leaves $\lf\neq\nf$ in $\Tc'$ is inherited from $\Tc$, while $z_\nf$ is defined to be $z_{\lf_*}$, where $\lf_*$ is the leftmost leaf in $\Tc_\nf$. 
\end{itemize}
We then have, see the proofs below, that:
\begin{enumerate}[{(1)}]
\item The $\pi$ is in bijection with $(\pi_1,O,\pi_2)$, and we must have $s\geq 1$ (because $\pi$ is primitive, so all the copies of leaves in $\Lc_\nf$ \emph{cannot} form one single block). 
\item For $\pi_1$ there is no $j\not\in O$ such that $\pi_1(j)$ and $\pi_1(j+1)$ are two copies of the same leaf (this is because $\pi$ has no adjacent copies of the same leaf), and for $\pi_2$ there is no $j$ such that $\pi_2(j)$ and $\pi_2(j+1)$ are two copies of the same leaf (for leaves other than $\nf$ this follows from the same assumption for $\pi$, while for the compound leaf $\nf$ this follows from the definition of $O$). Moreover $\pi_2$ is primitive (as $\pi$ is).
\item The vectors $(z_\lf):\lf\in\Lc'$ satisfy Definition \ref{def.hepp_3} (a) and (\ref{eq.gap_3}) for $\Tc'$.
\item If we define
\begin{equation}\label{eq.local_5}
\Pi[(y_j),\pi,O]:=\prod_{j\not\in O}\langle y_{\pi(j)}-y_{\pi(j+1)}\rangle^{-2}
\end{equation} as in the summand on the left hand side of (\ref{eq.local_3}), then we have
\begin{equation}\label{eq.local_6}\Pi[(y_j),\pi,\varnothing]\leq 4^n\cdot\Pi[(y_j^1),\pi_1,O]\cdot\Pi[(y_j^2),\pi_2,\varnothing],
\end{equation} where $(y_j)$, $(y_j^1)$ and $(y_j^2)$ are the vectors corresponding to the trees $\Tc$, $\Tc_\nf$ and $\Tc'$ and the permutations $\pi$, $\pi_1$ and $\pi_2$ respectively.
\end{enumerate}

In fact, in the above statements, (1)--(2) are already evident from above, so we will only prove (3)--(4). For (3), Definition \ref{def.hepp_3} (a) is obvious because $(z_\lf)$ for $\Lc'$ is essentially inherited from that of $\Lc$ (and that $z_\nf$ is defined to be $z_{\lf_*}$ while also $\lf_*\in\Tc_\nf$ in $\Tc$). As for (\ref{eq.gap_3}) for $\Tc'$, say we need to prove it for leaves $\lf,\lf'$ below a branching node $\pf$ of $\Tc'$. If $\nf\not\in\Tc_\pf$, then  (\ref{eq.gap_3}) follows from the same estimate for $\Tc$ because of the inheritance. If $\nf\in\Tc_\pf$, then using the assumption (\ref{eq.gap_3}) for $\Tc$ together with (\ref{eq.gap_4}), we get that
\[
\begin{aligned}|z_{\lf}-z_{\lf'}|&\leq \sum_{\substack{\mf\leq\pf\\(\textrm{before})}}(\gamma_\mf^\infty)_{\mathrm{before}}\cdot N_\mf\\
&=\sum_{\substack{\mf\leq\pf\\(\textrm{after})}}(\gamma_\mf^\infty)_{\mathrm{before}}\cdot N_\mf+\sum_{\mf\leq\nf}(\gamma_\mf^\infty)_{\mathrm{before}}\cdot N_\mf\\
&\leq  \sum_{\substack{\mf\leq\pf\\(\textrm{after})}}(\gamma_\mf^\infty)_{\mathrm{before}}\cdot N_\mf+N_{\nf^+}/8\leq  \sum_{\substack{\mf\leq\pf\\(\textrm{after})}}(\gamma_\mf^\infty)_{\mathrm{after}}\cdot N_\mf,
\end{aligned}
\] because for $\mf=\nf^+$, its child $\nf$ is a branching node (contributing $1$) before, and becomes a compound leaf of multiplicity $s+1\geq 2$ after. This proves (3). 

Finally, for (4), we consider the different factors occurring on both sides of (\ref{eq.local_6}). For a permutation $\pi$ of $(y_j)$ of all leaves in $\Tc$, if $\pi(j)$ and $\pi(j+1)$ both belong to $\Lc_\nf$ or both belong to $\Lc\backslash\Lc_\nf$, then this factor occurs in the same form on both sides of (\ref{eq.local_6}). Now suppose $\pi(j)=\lf\in\Lc_\nf$ and $\pi(j+1)=\yf\in\Lc\backslash\Lc_\nf$ (or the other way round which is the same), then this occurs at most $2(s+1)\leq 2n$ times. For each occurrence, we have a factor $\langle z_\yf-z_\lf\rangle^{-2}$ on the left hand side of (\ref{eq.local_6}), and a factor $\langle z_\yf-z_{\lf_*}\rangle^{-2}$ on the right hand side of (\ref{eq.local_6}) due to our choice $z_\nf:=z_{\lf_*}$. However these two factors are comparable up to a multiple of $2$ thanks to (\ref{eq.local_4}). By making this comparison for all relevant factors (there are at most $2n$ of them), this proves (\ref{eq.local_6}).

\textbf{Step 3.} We can now prove Proposition \ref{prop.main2_2}, i.e. prove (\ref{eq.vol_3_2}) for $\Tc$. Recall $n$ is the number of leaves in $\Lc_\nf$ with multiplicity. Let $m$ be the number of leaves in $\Lc\backslash\Lc_\nf$ counted with multiplicity. By the above statements (1)--(4), we have
\begin{equation}\label{eq.local_7}
\textrm{LHS of (\ref{eq.vol_3_2})}=\sum_{\pi}^{(*)}\Pi[(y_j),\pi,\varnothing]\leq 4^n\sum_{(\pi_1,O,\pi_2)}\Pi[(y_j^1),\pi_1,O]\cdot\Pi[(y_j^2),\pi_2,\varnothing].
\end{equation} Now, for fixed $\pi_1$ and $O$, we may estimate $\sum_{\pi_2}\Pi[(y_j^2),\pi_2,\varnothing]$ using the induction hypothesis (as $\pi_2$ is primitive and has no adjacent copies of the same leaf, see (2) above); on the other hand, for fixed $O$, we can estimate $\sum_{\pi_1}\Pi[(y_j^1),\pi_1,O]$ using Proposition \ref{prop.main_4}. This then leads to
\begin{align}
\textrm{LHS of (\ref{eq.vol_3_2})}&\leq (4C_0^{1/2})^n\sum_O((n-s)!)^{1/2}\prod_{\lf\in\Lc_\nf^S}(m_\lf!)^{1/2}\prod_{\lf\in\Lc_\nf^C}(m_\lf!)^{3/4}\prod_{\mf\in\Bc_\nf}N_\mf^{-2(\gamma_\mf^2-1)}\cdot N_{\nf^+}^{2s}\prod_{\mf\in\Bc_\nf\backslash\{\nf\}}\bigg(\frac{N_\mf}{N_{\mf^+}}\bigg)^{3}\nonumber\\
&\times (N_\nf/N_{\nf^+})^{2+\mathbf{1}_{s\geq 2}}\cdot ((s+1)!)^{-1}\cdot C_0^{m+s+1}D^{|\Bc|-1}\sum_{W\subseteq\Bc\backslash (\{\rf\}\cup\Bc_\nf)}((m+s+1-2|W|)!)^{1/2}\nonumber\\
&\times\prod_{\lf\in\Lc^S\backslash\Lc_\nf^S}(m_\lf!)^{1/2}\cdot((s+1)!)^{3/4}\prod_{\lf\in\Lc^C\backslash\Lc_\nf^C}(m_\lf!)^{3/4}\prod_{\mf\in\Bc\backslash\Bc_\nf}N_\mf^{-2(\gamma_\mf^2-1)}\cdot N_{\nf^+}^{-2s}\nonumber\\
&\times \prod_{\mf\in\Bc\backslash (\{\rf\}\cup\Bc_\nf)}\bigg(\frac{N_\mf}{N_{\mf^+}}\bigg)^2\prod_{\mf\in\Bc\backslash (\{\rf\}\cup\Bc_\nf\cup W)}\frac{N_\mf}{N_{\mf^+}}\label{eq.local_8}
\end{align}
where $s=|O|$ and $\Lc_\nf^S=\Lc^S\cap\Lc_\nf$ and $\Lc_\nf^C=\Lc^C\cap\Lc_\nf$. Here we note that $\min(2s,3)=2+\mathbf{1}_{s\geq 2}$ for $s\geq 1$, and the followings:
\begin{enumerate}[{(i)}]
\item In $\pi_2$, the $s+1$ different copies of the leaf $\nf$ are \emph{viewed as identical}, while in considering the induction hypothesis the different copies of this leaf in $\Lc'$ are \emph{viewed as distinct}. This leads to an additional $((s+1)!)^{-1}$ factor in the sum $\sum_{\pi_2}\Pi[(y_j^2),\pi_2,\varnothing]$ compared to the induction hypothesis.
\item The total number of leaves of $\Tc'$ with multiplicity is equal to $m+s+1$, explaining the factors $C_0^{m+s+1}$ and $(m+s+1-2|W|)!$ in (\ref{eq.local_8}); the new leaf $\nf$ of $\Tc'$ is compound and has multiplicity $s+1$, explaining the factor $((s+1)!)^{-3/4}$ in (\ref{eq.local_8}). Also the number of branching nodes for $\Tc'$ is at most $|\Bc|-1$ (as $\nf$ becomes a leaf), explaining the factor $D^{|\Bc|-1}$ in (\ref{eq.local_8}).
\item Because of the new compound leaf $\nf$ in $\Tc'$ of multiplicity $s+1$, the value of $\gamma_{\nf^+}^2$ in $\Tc'$ is increased by $s$ compared to the same value in $\Tc$, explaining the factor $N_{\nf^+}^{-2s}$ in (\ref{eq.local_8}). All the other factors in (\ref{eq.local_8}) simply come from plugging in (\ref{eq.vol_3_2}) and (\ref{eq.local_3}) and are easily checked.
\end{enumerate}

Now, simplifying the right hand side of (\ref{eq.local_8}) we get
\begin{equation}\label{eq.local_9}
\begin{aligned}
\textrm{LHS of (\ref{eq.vol_3_2})}&\leq C_0^{m}(4C_0^{1/2})^nD^{|\Bc|}\cdot D^{-1}C_0^{s+1}\cdot ((s+1)!)^{-1}((s+1)!)^{3/4}\sum_O\sum_W((n-s)!)^{1/2}\\&\times(m+s+1-2|W|)^{1/2}\prod_{\lf\in\Lc^S}(m_\lf!)^{1/2}\prod_{\lf\in\Lc^C}(m_\lf!)^{3/4}\prod_{\mf\in\Bc}N_\mf^{-2(\gamma_\mf^2-1)}\prod_{\mf\in\Bc\backslash\{\rf\}}\bigg(\frac{N_\mf}{N_{\mf^+}}\bigg)^2\prod_{\mf\not\in W'}\frac{N_\mf}{N_{\mf^+}},
\end{aligned}
\end{equation} where \[W'=\left\{
\begin{aligned}
&W,&\mathrm{if\ }s>1;\\
&W\cup\{\nf\},&\mathrm{if\ }s=1.
\end{aligned}\right.\] Note that $D^{-1}C_0^{s+1}\cdot ((s+1)!)^{-1}((s+1)!)^{3/4}\leq 1$ (as $D=\exp(C_0^{10})$); moreover we have
\begin{equation}\label{eq.local_9.5}((n-s)!)^{1/2}((m+s+1-2|W|)!)^{1/2}\leq((n+m-2|W'|)!)^{1/2}\cdot n^2,\end{equation} which follows by considering the cases $s=1$ (and $|W'|=|W|+1$) and $s>1$ (and $|W'|=|W|$) separately, and using that $n\geq s+1$ and $n\geq 4$ (as $\nf$ has at least two leaves, each with multiplicity $\geq 2$). This extra factor $n^2\leq 2^n$ in (\ref{eq.local_9.5}), and the factor $2^n$ coming from summation in $O$, contributes another $4^n$, thus
\begin{multline}\label{eq.local_10}
\textrm{LHS of (\ref{eq.vol_3_2})}\leq C_0^{m}(16C_0^{1/2})^nD^{|\Bc|}\sum_{W'}((n+m-2|W'|)!)^{1/2} \\\times\prod_{\lf\in\Lc^S}(m_\lf!)^{1/2}\prod_{\lf\in\Lc^C}(m_\lf!)^{3/4}\prod_{\mf\in\Bc}N_\mf^{-2(\gamma_\mf^2-1)}\prod_{\mf\in\Bc\backslash\{\rf\}}\bigg(\frac{N_\mf}{N_{\mf^+}}\bigg)^2\prod_{\mf\not\in W'}\frac{N_\mf}{N_{\mf^+}}.
\end{multline} Using that $16C_0^{1/2}\leq C_0$, and that the total number of leaves of $\Tc$ with multiplicities equals $m+n$, we see that this implies (\ref{eq.vol_3_2}) for $\Tc$. The inductive proof is now complete.
\end{proof}

{\it Example.} Suppose that the copies of the leaves belonging to $\Tc_\nf$ are $ a_1 , b_1, a_2, b_2$
where $a_1,a_2$ are two copies of the leaf $a$, and $b_1,b_2$ are two copies of the leaf $b$.  Suppose outside $\Tc_\nf$  we have the copies $c_1,c_2,c_3$ of a leaf $c$.
Consider $ \pi=(c_1, a_1, b_1, c_2, a_2, b_2, c_3)$.
Restricting $\pi$  to $\Tc_\nf$  gives
  $ \pi_1=(a_1, b_1,a_2, b_2)$. Here, 
  $b_1,a_2$ are adjacent in $\pi_1$, but  
they are not adjacent in the original permutation
$\pi$, since the outside copy $c_2$ lies between them.  Hence the skipped set is  $O=\{2\}$,
and $O$ breaks $\pi_1$ into two consecutive blocks,
$B_1=(a_1,b_1)$  and $B_2=(a_2,b_2)$.

After collapsing  $\Tc_\nf$ into one compound leaf, each of
these two blocks becomes one copy of the new leaf $\nf$.  Therefore the
corresponding permutation of the collapsed tree $\Tc'$ is
 $ \pi_2=(c_1, \nf, c_2, \nf, c_3)$
where the new compound leaf $\nf$ has multiplicity $|O|+1=2$.
Conversely, the original permutation $\pi$ is recovered uniquely from
$(\pi_1,O,\pi_2)$ by replacing the two copies of $\nf$ in $\pi_2$ by the blocks
$B_1$ and $B_2$ respectively, or equivalently,
inserting $c_1,c_2,c_3$ into the spaces $*$ in $(*,B_1,* ,B_2,*)$.
  This illustrates the bijection $ \pi\to (\pi_1,O,\pi_2)$.

In this example, the inside factor
$ \Pi[(y_j^1),\pi_1,O]=
        \langle z_a-z_b\rangle^{-2}
        \langle z_a-z_b\rangle^{-2}$, noting that 
 the edge between $b_1$ and $a_2$ is skipped.  The collapsed permutation
produces the outside factor
\[
        \Pi[(y_j^2),\pi_2,\varnothing]=
        \langle z_c-z_\nf\rangle^{-2}
        \langle z_\nf-z_c\rangle^{-2}
        \langle z_c-z_\nf\rangle^{-2}
        \langle z_\nf-z_c\rangle^{-2}.
\]
The left picture below shows the collapse of the tree $\Tc_\nf$ into a new leaf $\nf$; the right picture shows the points in $\Zb^4$ (or rather the points after possible merging), where each line represents the kernel $\langle \;\; \rangle^{-2}$, red being inside and black being outside (before merging $a,b$ into $\nf$ using \eqref{eq.local_4}):
\[
\begin{tikzpicture}[scale=0.6,baseline=-10]
\node[Bdot]  (r) at (0,1.2) {};\node[Bdot] (n) at (-1,0) {};\node[dot] (c) at (1,0) {};\node[dot] (a) at (-2,-1.2) {};\node[dot]  (b) at (0,-1.2) {};
\node  at (-1.3,0) {$\nf$};\node  at (1.3,0) {$c$};
\node at (-2.3,-1.2) {$a$};\node at (0.3,-1.2) {$b$};
\draw (r) -- (n); \draw (r) -- (c); \draw (n) -- (a);\draw (n) -- (b);
\end{tikzpicture}
\qquad \Rightarrow \qquad
\begin{tikzpicture}[scale=0.6,baseline=3]
\node[Bdot]  (r) at (0,1.2) {};\node[dot] (n) at (-1,0) {};\node[dot] (c) at (1,0) {};
\node  at (-1.3,0) {$\nf$};\node  at (1.3,0) {$c$};
\draw (r) -- (n); \draw (r) -- (c); 
\end{tikzpicture}
\qquad\qquad\qquad
\begin{tikzpicture}[scale=1.5,baseline=-10]
\node[dot] (a) at (0,0.1) {}; 
\node[dot] (b) at (0,-0.1) {}; 
\node at (0,0.3) {$a$};
\node at (0,-0.3) {$b$};
\node[dot] (c) at (2.4,0) {}; 
\node at (2.6,0) {$c$};
\draw[bend right=30,midarrow] (c) to (a);
\draw[bend right=40,red,midarrow] (a) to (b);
\draw[bend right=10,midarrow] (b) to (c);
\draw[bend right=10,midarrow] (c) to (a);
\draw[bend left=40,red,midarrow] (a) to (b);
\draw[bend left=5,midarrow] (b) to (c);
\end{tikzpicture}
\]

\subsubsection{A few lemmas} For the rest of this subsection we prove Proposition \ref{prop.main_4}. We start by introducing a few lemmas which will be useful in the proof below.
\begin{definition}\label{def.lac} For positive integers $(X_1,\cdots,X_n)$, we say it is \emph{lacunary}, if any interval $[X,2X]$ contains $X_j$ for at most $O(1)$ values of $j$.
\end{definition}

\begin{lemma}\label{lem.aux1} For the binomial coefficient, we have
\begin{equation}\label{eq.aux1}
\binom{m+n}{m}\leq \alpha^m\beta^n,
\end{equation} where $\alpha>1$ is arbitrary and $\beta=\beta(\alpha)>1$ depends only on $\alpha$. Similarly, for the multinomial coefficient, we have (where $n=n_1+\cdots +n_r$)
\begin{equation}\label{eq.aux2}
\binom{n}{n_1,\cdots,n_r}=\frac{n!}{n_1!\cdots n_r!}\leq\alpha^{n_1+2n_2+\cdots +rn_r}\cdot\beta^{n_1+\cdots +n_r},
\end{equation} where again $\alpha>1$ is arbitrary and $\beta=\beta(\alpha)>1$ depends only on $\alpha$.

In particular, if the $(n_j)$ are \emph{lacunary} as in Definition \ref{def.lac}, then we have 
\begin{equation}\label{eq.aux3}
\binom{n}{n_1,\cdots,n_r}\leq C^{n_1+\cdots +n_r}.
\end{equation}
\end{lemma}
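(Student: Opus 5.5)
The plan is to obtain everything from elementary entropy/Stirling bounds. The three inequalities will be proved in order, each feeding the next.

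\textbf{Binomial bound (\ref{eq.aux1}).} By symmetry it is enough to handle the case where $m$ is the index carrying the weight $\alpha$. We split into two regimes, $m\geq Kn$ and $m<Kn$, where $K=K(\alpha)$ is a large constant to be fixed.
\begin{itemize}
\item If $m\geq Kn$, use $\binom{m+n}{n}\leq \big(e(m+n)/n\big)^n$. The function $x\mapsto (e(1+x))^{1/x}$ tends to $1$ as $x\to\infty$. Hence, writing $x=m/n\geq K$, we get $(e(1+m/n))^{n}\leq \alpha^{m}$ once $K$ is large depending on $\alpha$.
\item If $m<Kn$, bound crudely: $\binom{m+n}{m}\leq 2^{m+n}\leq 2^{(K+1)n}$.
\end{itemize}
So in both cases (\ref{eq.aux1}) holds with $\beta=2^{K+1}$. (The case $n=0$ is trivial.)

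\textbf{Multinomial bound (\ref{eq.aux2}).} We telescope the multinomial coefficient as a product of binomials:
\[
\frac{n!}{n_1!\cdots n_r!}=\prod_{j=1}^{r}\binom{n_j+n_{j+1}+\cdots+n_r}{n_j}.
\]
Apply (\ref{eq.aux1}) to each factor, putting the weight $\beta$ on $n_j$ and the weight $\alpha$ on $n_{j+1}+\cdots+n_r$. The product then becomes
\[
\alpha^{\sum_j (n_{j+1}+\cdots+n_r)}\beta^{\sum_j n_j}.
\]
The exponent of $\alpha$ is $\sum_k (k-1)n_k\leq \sum_k k n_k$, which is (\ref{eq.aux2}). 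This is the same mechanism as in (\ref{eq.perm}), with $2$ replaced by a tunable $\alpha$.

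\textbf{Lacunary case (\ref{eq.aux3}).} Two reductions come first.
\begin{itemize}
\item The multinomial coefficient is invariant under reordering the $n_j$. So we may reorder them decreasingly, $n_1\geq n_2\geq\cdots\geq n_r$, and ignore zero entries.
\item Lacunarity says each dyadic interval $[2^k,2^{k+1}]$ contains $O(1)$ of the $n_j$. Hence in decreasing order $n_k\leq C 2^{-k/C'}n_1$, i.e. the sequence decays geometrically in $k$.
\end{itemize}
Now apply (\ref{eq.aux2}) with the $\alpha$-exponent $\sum_k k n_k$. Using $k\,n_k\lesssim k\,2^{-k/C'}n_1$, the sum is $\sum_k k n_k\lesssim n_1\leq n$. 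Therefore
\[
\alpha^{\sum_k kn_k}\beta^{n}\leq C^{n},
\]
with $\alpha=2$ fixed. Recall that $n=n_1+\cdots+n_r$ here, so this is exactly (\ref{eq.aux3}).

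\textbf{Main subtlety.} The one point needing care is that the ordering must be decreasing, with the largest $n_j$ getting the weight $1$ (index $k=1$) in (\ref{eq.aux2}). Otherwise $\sum_k kn_k$ could be of size $rn$ rather than $O(n)$. Given the permutation invariance of the multinomial coefficient, this is routine.
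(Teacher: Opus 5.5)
Your proposal is correct and follows essentially the paper's argument: the same telescoping product of binomials gives (\ref{eq.aux2}), and the same decreasing reordering combined with lacunarity makes the $\alpha$-exponent $O(n)$ for (\ref{eq.aux3}). The only difference is in (\ref{eq.aux1}): the paper bounds $\binom{m+n}{n}\le (m+n)^n/n!$ and maximizes $n\log(n+m)-m\log\alpha$ over $m$, obtaining the explicit $\beta=\alpha/\log\alpha$ for $1<\alpha<2$, whereas your split into $m\ge Kn$ and $m<Kn$ yields a non-explicit but equally valid $\beta$.
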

\begin{proof} In (\ref{eq.aux1}) we may assume $1<\alpha<2$. Then
\begin{equation}\frac{1}{\alpha^m}\binom{m+n}{n}=\frac{(m+1)\cdots (m+n)}{\alpha^m\cdot n!}\leq\frac{1}{n!}\cdot\frac{(m+n)^n}{\alpha^m}.\end{equation} Note that the function $m\mapsto n\cdot\log(n+m)-m\cdot\log\alpha$ has a unique point of maximum at $m=\gamma n$ where $\gamma:=(1/\log\alpha)-1$, thus
\begin{equation}\frac{1}{\alpha^m}\binom{m+n}{n}\leq\frac{1}{n!}\cdot\frac{(\gamma n+n)^n}{\alpha^{\gamma n}}\leq\big(e(\gamma+1)\alpha^{-\gamma}\big)^n\end{equation} (as $n!\geq n^ne^{-n}$), so we may choose $\beta:=e(\gamma+1)\alpha^{-\gamma}\,(=\alpha/\log\alpha)$.

Now (\ref{eq.aux2}) follows from (\ref{eq.aux1}): we have
\begin{equation}\label{eq.aux4}
\binom{n}{n_1,\cdots,n_r}=\prod_{j=1}^r\binom{n_j+\cdots +n_r}{n_j}\leq\alpha^{\sum_{j=1}^r(n_{j+1}+\cdots +n_{r})}\cdot \beta^{\sum_{j=1}^r n_j}\leq \alpha^{n_1+2n_2+\cdots +rn_r}\cdot\beta^{n_1+\cdots +n_r}.
\end{equation} Finally, for (\ref{eq.aux3}), assume $n_1\geq\cdots \geq n_r$, then by lacunary assumption we have $n_j\geq 2n_{j-C}$ for some constant $C=O(1)$, in particular $n_{j+1}+\cdots+n_r\lesssim n_j$ uniformly in $j$, therefore (\ref{eq.aux3}) follows from (\ref{eq.aux4}).
\end{proof}
\begin{lemma}\label{lem.aux2} For any $\theta>0$ and positive integers $m\leq n$, we have
\begin{equation} \label{eq.aux5}\sum_{1\leq z_1,\cdots,z_m\leq n}\prod_{j=1}^{m-1}\min(1,2^{-\theta(z_j-z_{j+1})})\leq C^n,
\end{equation} where $C=C(\theta)>1$ is an absolute constant depending on $\theta$. Similarly, if $(P_1,\cdots,P_n)$ are distinct elements of $2^\Nb$, then
\begin{equation}\label{eq.aux6}\sum_{1\leq x(1),\cdots x(n)\leq n}\prod_{j=1}^{n-1}\min\bigg(1,\bigg(\frac{P_{x(j+1)}}{P_{x(j)}}\bigg)^{\theta}\bigg)\leq C^n.
\end{equation} 
The same result is true if the product in (\ref{eq.aux6}) is taken over $j\in\{1,\cdots,n-1\}\backslash E$ for any set $E$ with at most $100$ elements.
\end{lemma}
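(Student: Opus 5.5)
The plan is to reduce both estimates to a count of integer paths with a weight that penalizes only downward steps, and then to control that count with Lemma \ref{lem.aux1}. The weight ignores upward steps entirely, so upward moves are free. The key point is that the total length of the downward moves controls the total length of the upward moves, up to an additive error of $n$.

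\textbf{Setup for (\ref{eq.aux5}).} Write $a_j:=z_{j+1}-z_j$ for $1\leq j\leq m-1$. The summand then equals $2^{-\theta D}$, where $D:=\sum_j(a_j)_-$ is the total downward displacement. Let $U:=\sum_j(a_j)_+$. Since $z_1,z_m\in[1,n]$, we have $U=D+(z_m-z_1)\leq D+n$, and hence $\sum_j|a_j|\leq 2D+n$.

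\textbf{Counting for (\ref{eq.aux5}).} We first fix $z_1$, which costs a factor $n\leq 2^n$. We then fix $D$ and count the increment vectors $(a_1,\cdots,a_{m-1})\in\Zb^{m-1}$ with $\sum_j|a_j|\leq 2D+n$. Choosing the absolute values as a weak composition and then the signs, this number is at most
\[2^m\binom{2D+n+m}{m}=2^m\binom{2D+n+m}{2D+n}.\]
By (\ref{eq.aux1}), applied with the roles so that the large exponent $2D+n$ carries $\alpha$, this is at most $\alpha^{2D+n}\beta^m$. Here we choose $\alpha>1$ with $\alpha^2<2^{\theta}$.

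\textbf{Summing for (\ref{eq.aux5}).} Summing over $D\geq 0$, the left-hand side of (\ref{eq.aux5}) is bounded by
\[2^n\cdot 2^m\alpha^n\beta^m\sum_{D\geq0}\big(\alpha^2 2^{-\theta}\big)^D\leq C(\theta)^n,\]
where the last step uses $m\leq n$. The one point requiring care is the choice of $\alpha$ relative to $\theta$, so that the geometric series in $D$ converges.

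\textbf{Proof of (\ref{eq.aux6}).} Order the distinct dyadic numbers $P_1,\cdots,P_n$ increasingly. Let $k(x)\in[1,n]$ denote the rank of $P_x$, so that $x\mapsto k(x)$ is a bijection. Since the $P_x$ are distinct powers of $2$, the ranks satisfy $P_{x}/P_{x'}\leq 2^{k(x)-k(x')}$ whenever $k(x)<k(x')$. Hence
\[\min\big(1,(P_{x(j+1)}/P_{x(j)})^\theta\big)\leq\min\big(1,2^{-\theta(k(x(j))-k(x(j+1)))}\big).\]
Substituting $z_j:=k(x(j))$, which is a bijective change of summation variables, reduces (\ref{eq.aux6}) to (\ref{eq.aux5}) with $m=n$.

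\textbf{The exceptional set $E$.} When the product runs only over $j\notin E$ with $|E|\leq 100$, the index range $[1,n]$ splits into at most $101$ consecutive segments. Within each segment the product is of the same form, and the segments involve disjoint sets of variables. We therefore apply (\ref{eq.aux5}) (together with the rank reduction above) to each segment separately, each of length at most $n$. This gives a bound $C^{101n}$, which is again of the form $C^n$ after enlarging the constant.
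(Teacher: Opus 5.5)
Your proof is correct and takes essentially the same approach as the paper. Like the paper, you penalize only downward steps, control the upward displacement by the downward displacement plus $n$, bound the resulting count by a binomial coefficient and apply (\ref{eq.aux1}) with $\alpha$ tied to $\theta$; the only difference is bookkeeping (the paper fixes the whole vector of downward steps $w_j$ and encodes $(z_j)$ as a strictly increasing sequence in $[1,m+n+\sum_j w_j]$, where you fix $z_1$ and the total $D$ and count an $\ell^1$-ball), and your rank relabeling for (\ref{eq.aux6}) and splitting into at most $101$ segments for $E$ match the paper exactly.
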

\begin{proof} For (\ref{eq.aux5}), define $w_j:=\max(0,z_j-z_{j+1})$, then $w_j\geq 0$. We have
\begin{equation}\label{eq.aux7}
\textrm{LHS of (\ref{eq.aux5})}=\sum_{(w_1,\cdots,w_{m-1})}\prod_{j=1}^{m-1}2^{-\theta w_j}\sum_{\substack{1\leq z_1,\cdots,z_m\leq n\\\max(0,z_j-z_{j+1})=w_j}}1.
\end{equation} For fixed $(w_j)$, let us count the number of sequences $(z_j)$ in the inner sum. Define $z:=w_1+\cdots +w_{m-1}$, clearly we have $z_j-z_{j+1}\leq w_j$, thus 
\[Z_j:=z_j+(w_1+\cdots +w_{j-1})+j\] is a \emph{strictly increasing} sequence in $j$. As $1\leq Z_j\leq m+n+z$ because $z_j\leq n$ and $j\leq m$, we know the number of choices for the sequence $(Z_j)$, i.e. the number of choices for the sequence $(z_j)$, is bounded by
\[\binom{m+n+z}{m}\leq 2^{\theta(n+z)/2}C^m,\] where we have applied (\ref{eq.aux1}) with $\alpha=2^{\theta/2}$, and $C$ is an absolute constant depending on $\theta$. Then, plugging into (\ref{eq.aux7}), and note that $m\leq n$, we get
\[\textrm{LHS of (\ref{eq.aux5})}\leq (2^{\theta/2}C)^n\sum_{w_1,\cdots,w_{n-1}\geq 0}\prod_{j=1}^{n-1}2^{-\theta w_j}\prod_{j=1}^{n-1} 2^{\theta w_j/2}\leq \bigg(\frac{2^{\theta/2}C}{1-2^{-\theta/2}}\bigg)^n.\] 
Now for (\ref{eq.aux6}), we may assume $P_{j+1}\geq 2P_{j}$, then it follows from (\ref{eq.aux5}) because
\[\min\bigg(1,\frac{P_{x(j+1)}}{P_{x(j)}}\bigg)\leq \min(1,2^{-(x(j)-x(j+1))}).\] If we skip at most $100$ factors in the product (\ref{eq.aux6}), then we still have the product of at most $101$ consecutive sequences of factors, so (\ref{eq.aux6}) follows by applying (\ref{eq.aux5}) to each of these sequences.
\end{proof}
\begin{lemma}\label{lem.aux3} Let $A$ and $B$ be two finite subsets of $\mathbb{Z}^4$, and $M,N\in 2^\Nb$, such that the distance between any two points in $A$ is $\gtrsim M$, and the distance between any two points in $B$ is $\gtrsim N$. Then uniformly in $u\in\mathbb{Z}^4$ and in all parameters, we have
\begin{equation}\label{eq.aux8}\sum_{x\in A, y\in B}\mathbf{1}_{|u-x|\gtrsim M}\cdot\mathbf{1}_{|x-y|\gtrsim\max(M,N)}\cdot\frac{1}{|u-x|^2|x-y|^2}\lesssim |A|^{1/2}M^{-2}\cdot |B|^{1/2}N^{-2}\cdot\min(1,(Q/P)^{1/4}),
\end{equation} where
\begin{equation}\label{eq.aux9} P:=|A|\cdot M^4,\quad Q:=|B|\cdot N^4.
\end{equation}
\end{lemma}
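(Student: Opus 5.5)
The plan is to reduce everything to dyadic shell counting. The only geometric input is the packing bound: a set whose points are pairwise $\gtrsim M$ apart has at most $\lesssim \min\big(|A|,(r/M)^4\big)$ points in any ball of radius $r\gtrsim M$. The model computation is the one-variable bound
\[
\sum_{x\in A}\mathbf{1}_{|u-x|\gtrsim M}\,|u-x|^{-2}\;\lesssim\;\sum_{k\geq 0}\min\big(|A|,2^{4k}\big)\,(2^kM)^{-2}\;\lesssim\;|A|^{1/2}M^{-2}.
\]
The geometric sum peaks at the natural radius $2^kM\sim P^{1/4}=|A|^{1/4}M$ of $A$.

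\textbf{Step 1 (bound without the gain).} Apply the same argument to $y\in B$ around a fixed $x$, using shells of radius $2^kK$ with $K=\max(M,N)$. The counts are at most $\min(|B|,(2^kK/N)^4)$, which gives $\sum_{y}\mathbf{1}_{|x-y|\gtrsim K}|x-y|^{-2}\lesssim |B|^{1/2}N^{-2}$ uniformly in $x$. Summing first in $y$ and then in $x$ yields (\ref{eq.aux8}) with the factor $1$. This settles the case $Q\geq P$.

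\textbf{Step 2 (the gain when $Q<P$).} Here I would decompose $x$ by the dyadic distance $|u-x|\sim r$ and $y$ by $|x-y|\sim t$, and use two different estimates for $f(x)=\sum_{y}\mathbf{1}_{|x-y|\sim t}\,|x-y|^{-2}$. The first is $f(x)\lesssim \min(|B|,(t/N)^4)\,t^{-2}$, which is sharp for $t\lesssim Q^{1/4}$ and decays like $|B|t^{-2}$ beyond. The second counts the $x$-side: there are $\lesssim\min(|A|,(r/M)^4)$ points at scale $r$, weighted by $r^{-2}$.

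The dominant $x$-contribution comes from $r\sim P^{1/4}$, and the dominant $y$-contribution from $t\sim Q^{1/4}$. When $Q^{1/4}\ll P^{1/4}$, the two maxima cannot be realized simultaneously at full strength, because $B$'s effective mass lives at distance $\lesssim Q^{1/4}$ from each $x$. To exploit this, I would swap the order of summation in the regime $t\ll r$: for fixed $y$, sum over $x$ near $y$ (within distance $t$). The packing of $A$ then gives at most $\min(|A|,(t/M)^4)$ such $x$, each with $|u-x|\sim|u-y|$. Symmetrically, in the regime $t\gtrsim r$, I would sum over $x$ first.

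In each regime the double dyadic sum becomes a product of two geometric series, weighted by the minimum of the two count bounds. Optimizing over the crossover should produce the extra factor $(Q^{1/4}/P^{1/4})$, that is, $(Q/P)^{1/4}$. I expect this roughly because the $x$-count loses $(t/r)^4$ but the weights only gain $(r/t)^2$, and the square roots in the final bound leave a quarter power.

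\textbf{Main obstacle.} The hard part is this bilinear case analysis: arranging the regimes $t\ll r$ and $t\gtrsim r$ so that the counting of $x$ near $y$ and near $u$ are used consistently, and checking that no logarithmic loss appears at the crossover scales $r\sim P^{1/4}$ and $t\sim Q^{1/4}$. The hypothesis $|x-y|\gtrsim\max(M,N)$ is what makes the $x$-near-$y$ count start at scale $M$. I would first verify the exponent on the extreme configuration in which $A$ fills a ball of radius $P^{1/4}$ around $u$ and $B$ fills a ball of radius $Q^{1/4}$, then adapt the shell sums to that picture.
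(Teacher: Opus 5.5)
Your Step 1 is exactly the paper's argument for $Q\ge P$, and it is correct. The gap is Step 2, which carries the entire content of the lemma. You set up a double dyadic decomposition but never derive a bound. Moreover, the expectations you record about the outcome are wrong, so this is not merely omitted routine detail.

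The sum is \emph{not} a product of geometric series in each regime, and a logarithmic loss does occur and cannot be avoided. Take your own extreme configuration, say with $M=N$: $A$ fills the ball of radius $P^{1/4}$ about $u$, and $B$ fills the ball of radius $Q^{1/4}$. Then every $y\in B$ sees the shells $2Q^{1/4}\le|u-x|\le P^{1/4}$, each contributing $\sim M^{-4}$. So once $P/Q$ is large, the left side of \eqref{eq.aux8} is $\gtrsim M^{-4}|B|\log(P/Q)$.

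Correspondingly, the factor $(Q/P)^{1/4}$ does not come from balancing $(t/r)^4$ against $(r/t)^2$. It is just a convenient, lossy majorant of this logarithm. The target becomes transparent once you note the identity
\[
|A|^{1/2}M^{-2}|B|^{1/2}N^{-2}(Q/P)^{1/4}=M^{-4}|B|(P/Q)^{1/4}=M^{-4}P^{1/4}|B|^{3/4}N^{-1},
\]
which is the right side of \eqref{eq.aux8} when $Q\le P$. So you only need each $y\in B$ to contribute, on average, $\lesssim M^{-4}(P/Q)^{1/4}$.

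To close the gap, use the swap you propose for $t\ll r$ throughout: fix $y$ (with $u=0$) and sum over $x$.
\begin{itemize}
\item Where $|x|\lesssim|y|$, one has $|x-y|\sim|y|$ or $|x|\sim|y|$. Packing gives $\sum_{M\lesssim|x|\lesssim|y|}|x|^{-2}\lesssim|y|^2M^{-4}$, and likewise with $|x-y|$ in place of $|x|$. So this part is $\lesssim M^{-4}$.
\item Where $|x|\gg|y|$, one has $|x-y|\sim|x|\gtrsim\max(|y|,M,N)$. The shells then give $\lesssim M^{-4}\big(1+\log(2+P^{1/4}/\max(|y|,M,N))\big)$, with the logarithm coming from the flat range of scales up to $P^{1/4}$.
\end{itemize}
Now bound the logarithm by $1+P^{1/4}/\max(|y|,M,N)$ and sum over $y\in B$ in dyadic shells. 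When the maximum is $N$ there are $O(1)$ choices of $y$, and when it is $M$ there are at most $\min(|B|,(M/N)^4)$. This gives $M^{-4}|B|+M^{-4}P^{1/4}|B|^{3/4}N^{-1}$, which is the right-hand side.

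This is precisely the paper's route through \eqref{eq.lem4_3}. Keeping the logarithm instead would give the sharper bound $M^{-4}|B|\big(1+\log(2+P/Q)\big)$, matching the example above.
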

\begin{proof} First, for any fixed $u\in\mathbb{Z}^4$ we have
\begin{equation}\label{eq.lem4_2}\sum_{x\in A}\mathbf{1}_{|u-x|\gtrsim M}\cdot\frac{1}{|u-x|^2}\lesssim|A|^{1/2}M^{-2}.
\end{equation} In fact, by translating $A$ we may assume $u=0$. For any dyadic number $X\in 2^{\mathbb{N}}$, using the lower bound $M$ for distances of different points in $A$ (so any $M$-box contains $O(1)$ points in $A$), we have 
\begin{equation}\#\{x\in A:|x|\gtrsim M, |x|\sim X\}\lesssim \min(|A|,(X/M)^4)\cdot \mathbf{1}_{X\gtrsim M},\end{equation} therefore, by a dyadic decomposition in $|x|$, we have
\begin{equation}\label{eq.lem4_2.5}
\sum_{x\in A}\mathbf{1}_{|x|\gtrsim M}\cdot\frac{1}{|x|^2}\lesssim \sum_{X\gtrsim M}X^{-2}\cdot \#\{x\in A:|x|\sim X\}\lesssim\sum_{X\gtrsim M} \min(|A|\cdot X^{-2},X^2/M^4)\lesssim |A|^{1/2}M^{-2}.
\end{equation} The case $Q\geq P$ of (\ref{eq.aux8}) then follows from iterating (\ref{eq.lem4_2}), first summing in $y$ and then summing in $x$.

Now we prove the case $P\geq Q$ of (\ref{eq.aux8}). Again by translation we may set $u=0$. For each $y\in B$, we will prove that
\begin{equation}\label{eq.lem4_3}
\sum_{x\in A} \mathbf{1}_{|x|\gtrsim M}\cdot \mathbf{1}_{|x-y|\gtrsim\max(M,N)}\cdot\frac{1}{|x|^2|x-y|^2}\lesssim M^{-4}\cdot\bigg(1+\frac{P^{1/4}}{\max(|y|,M,N)}\bigg).
\end{equation}

To prove (\ref{eq.lem4_3}), we consider two cases in the summation, where (a) $|x|\lesssim|y|$ or (b) $|x|\gg|y|$. In case (a) we may further assume that $|x|\leq|x-y|$ (the other case is similar), then $|x-y|\sim|y|$, therefore
\[\sum_{(a)} \mathbf{1}_{|x|\gtrsim M}\cdot \mathbf{1}_{|x-y|\gtrsim\max(M,N)}\cdot\frac{1}{|x|^2|x-y|^2}\lesssim|y|^{-2}\sum_{M\lesssim|x|\lesssim|y|,x\in A}|x|^{-2}\lesssim M^{-4}\] by the same dyadic decomposition in (\ref{eq.lem4_2.5}). Now in case (b), we have $|x|\sim|x-y|\gg|y|$. Let $|x|\sim X\in 2^{\mathbb{N}}$ and $X\gtrsim\max(|y|,M,N)$, then similar to (\ref{eq.lem4_2.5}) we have
\begin{multline}\label{eq.aux10}\sum_{(b)}\frac{1}{|x|^2|x-y|^2}\lesssim\sum_{|x|\gtrsim\max(|y|,M,N),x\in A}\frac{1}{|x|^4}\lesssim\sum_{X\gtrsim \max(|y|,M,N)}X^{-4}\cdot\#\{x\in A:|x|\sim X\}\\\lesssim\sum_{X\gtrsim \max(|y|,M,N)}X^{-4}\cdot\min\bigg(|A|,\big(\frac{X}{M}\big)^4\bigg).\end{multline} In (\ref{eq.aux10}), for $X\geq P^{1/4}=|A|^{1/4}M$ we get contribution $O(M^{-4})$; for $\max(|y|,M,N)\lesssim X\leq P^{1/4}$, we must have $P^{1/4}\gtrsim\max(|y|,M,N)$, and the summation in (\ref{eq.aux10}) gives \[M^{-4}\cdot\log\bigg(2+\frac{P^{1/4}}{\max(|y|,M,N)}\bigg)\lesssim M^{-4}\cdot\bigg(1+\frac{P^{1/4}}{\max(|y|,M,N)}\bigg),\] which is also acceptable. This proves (\ref{eq.lem4_3}).

Finally, with (\ref{eq.lem4_3}) proved, we then sum in $y$. There are three cases depending on the value of $\max(|y|,M,N)$:
\begin{itemize}
\item If $\max(|y|,M,N)=|y|$, then
\begin{multline}\label{eq.aux11}\textrm{LHS of (\ref{eq.aux8})}\lesssim M^{-4}\sum_{|y|\gtrsim N,y\in B}\bigg(1+\frac{P^{1/4}}{|y|}\bigg)\lesssim M^{-4}\bigg(|B|+P^{1/4}\sum_{Y\gtrsim N}Y^{-1}\min(|B|,(Y/N)^4)\bigg)\\\lesssim M^{-4}|B|+M^{-4}P^{1/4}\cdot |B|^{3/4}N^{-1}=|A|^{1/2}M^{-2}\cdot |B|^{1/2}N^{-2}\cdot \big[(Q/P)^{1/2}+(Q/P)^{1/4}\big]
\end{multline} by the same dyadic decomposition in (\ref{eq.lem4_2.5}), which is sufficient as $Q\leq P$.
\item If $\max(|y|,M,N)=N$, then $|y|\leq N$ implies that $y$ has at most $O(1)$ choices, thus
\begin{equation}\textrm{LHS of (\ref{eq.aux8})}\lesssim M^{-4}+M^{-4}P^{1/4}N^{-1}\lesssim (\ref{eq.aux11}).
\end{equation}
\item If $\max(|y|,M,N)=M$, then $y$ has at most $\min((M/N)^4,|B|)\leq M/N\cdot |B|^{3/4}$ choices, thus
\begin{equation}\textrm{LHS of (\ref{eq.aux8})}\lesssim M^{-4}|B|+M^{-5}P^{1/4}\cdot |B|^{3/4}\cdot M/N= (\ref{eq.aux11}).
\end{equation}
\end{itemize}  This completes the proof of (\ref{eq.aux8}).
\end{proof}
\subsubsection{Concluding the proof}\label{sec.final} We can now conclude the proof of Proposition \ref{prop.main_4}. The proof is divided into 4 steps. In the proof below, $C$ will be an absolute constant independent from $C_0$; at the end of the proof we will take $C_0\gg_{C}1$, so the $C_0^{n/2}$ factor in (\ref{eq.local_3}) can absorb any $C^n$ factors.

\textbf{Step 1.} First, recall $N_\nf'$ defined in (\ref{eq.local_2}) and $\gamma_\nf^2$ and $\gamma_\nf^\infty$ defined in (\ref{eq.mul_1}), we have
\begin{equation}\label{eq.chain_1}\sum_{\mf\in\Bc}(\gamma_\mf^\infty-1)=\sum_{\mf}(\gamma_\mf-1)+\sum_{\lf\in\Lc} (m_\lf-1)=r-1+\sum_{\lf}(m_\lf-1)=\bigg(\sum_{\lf}m_\lf\bigg)-1=m-1\end{equation} where $r$ and $m$ are the number of leaves without and with multiplicities, so by (\ref{eq.local_2}) we have
\begin{multline}\label{eq.chain_2}\prod_{\nf\in\Bc\backslash\{\rf\}}\frac{N_{\nf^+}}{N_{\nf}}\leq 8^m\prod_{\nf\in\Bc\backslash\{\rf\}}\sum_{\mf\leq\nf}\frac{\gamma_\mf^\infty N_\mf}{N_\nf}\leq 8^m\prod_{\nf\in\Bc\backslash\{\rf\}}\exp\bigg(\sum_{\mf\leq\nf}\frac{\gamma_\mf^\infty N_\mf}{N_\nf}\bigg)\\
=8^m\exp\bigg(\sum_{\nf\in\Bc\backslash\{\rf\}}\sum_{\mf\leq\nf}\frac{\gamma_\mf^\infty N_\mf}{N_\nf}\bigg)=8^m\exp\bigg(\sum_\mf\gamma_\mf^\infty\sum_{\nf\geq\mf}\frac{N_\mf}{N_\nf}\bigg)\leq 8^m\exp\bigg(\sum_\mf\gamma_\mf^\infty \bigg)\leq C^m\end{multline} using (\ref{eq.chain_1}) and that $\gamma_\mf^\infty \leq 2(\gamma_\mf^\infty -1)$. This allows to take care of the $\prod_{\nf\in\Bc\backslash\{\rf\}}(N_\nf/N_{\nf^+})^{3}$ factor in (\ref{eq.local_3}). It then suffices to show that
\begin{equation}\label{eq.chain_3}
\sum_{\pi}^{(*)}\prod_{\substack{j=1\\j\not\in O}}^{m-1}\frac{1}{\langle y_{\pi(j)}-y_{\pi(j+1)}\rangle^2}\leq C^m((m-s)!)^{1/2}\prod_{\lf\in\Lc^S}(m_\lf!)^{1/2}\prod_{\lf\in\Lc^C}(m_\lf!)^{3/4}\cdot\prod_{\nf\in\Bc} N_\nf^{-2(\gamma_\nf^2 -1)}\cdot R^{2s}\cdot (N_\rf/R)^{\min(2s,3)}.
\end{equation}

In addition, let $\lf_0$ be the leaf such that $N_{\lf_0^+}$ is minimal. We can verify
\begin{equation}\label{eq.chain_3.5}
\prod_{\nf\in\Bc} N_\nf^{\gamma_\nf^2-1}=\prod_{\lf\in\Lc^S}N_{\lf^+}^{2}\prod_{\lf\in \Lc^C}N_{\nf^+}^{m_\lf}\cdot N_{\lf_0^+}^{-1}\cdot\prod_{\nf\in\Bc,\,\lf_0\not\in\Tc_\nf}\frac{N_{\nf^+}}{N_\nf}
\end{equation} again by counting the power of each $N_\nf$ on both sides, which is
\[|\Cc_\nf\cap\Bc|+2|\Cc_\nf\cap\Lc^S|+\sum_{\lf\in\Cc_\nf\cap\Lc^C}m_\lf-1\quad\textrm{for LHS},\]
\[|\Cc_\nf\cap\Bc|+2|\Cc_\nf\cap\Lc^S|+\sum_{\lf\in\Cc_\nf\cap\Lc^C}m_\lf-1+\mathbf{1}_{\lf_0\in\Tc_\nf}-\mathbf{1}_{\nf=\lf_0^+}-|\Cc_\nf\cap\Bc\cap\mathrm{An}(\lf_0)|\quad\textrm{for RHS},\] where $\mathrm{An}(\lf_0)$ is the set of ancestors of $\lf_0$, and it is easy to verify that the above two lines are equal. Because of this, and using also (\ref{eq.chain_2}), we now only need to prove that
\begin{multline}\label{eq.chain_6}
\sum_{\pi}^{(*)}\prod_{\substack{j=1\\j\not\in O}}^{m-1}\frac{1}{\langle y_{\pi(j)}-y_{\pi(j+1)}\rangle^2}\leq C^m((m-s)!)^{1/2}\prod_{\lf\in\Lc^S}(m_\lf!)^{1/2}\prod_{\lf\in\Lc^C}(m_\lf!)^{3/4}\\\times\prod_{\lf\in\Lc^S}N_{\lf^+}^{-4}\prod_{\lf\in\Lc^C}N_{\lf^+}^{-2m_\lf}\cdot N_{\lf_0^+}^2\cdot R^{2s}\cdot (N_\rf/R)^{\min(2s,3)}.
\end{multline}

\textbf{Step 2.} From now on we aim at proving (\ref{eq.chain_6}). Consider all the leaves $\lf\in\Lc$ and the corresponding values $N_{\lf^+}$, and collect all the \emph{different} values $N$ among them. For each $N$, let $\Lc_N$ be the set of leaves $\lf$ where $N_{\lf^+}=N$, and recall $m_\lf$ is the multiplicity. Note that different $\Lc_N$ are disjoint and form a partition of $\Lc$. From Definition \ref{def.hepp_3} (a), note that $(y_j)$ is a permutation of $m_\lf$ copies of $z_\lf$ for all $\lf$, and $(z_\lf)$ satisfies
\begin{equation}\label{eq.chain_8}
|z_{\lf}-z_{\lf'}|\geq\frac{1}{2}N_{\lf\vee\lf'}\geq\frac{1}{2}\max(N,N')
\end{equation} provided $\lf\in \Lc_N$ and $\lf'\in \Lc_{N'}$ and $\lf\neq\lf'$ (allowing $N=N'$).

Note that all the multiplicities $m_\lf\geq 2$. We may assume all leaves $\lf$ with multiplicity $m_\lf=2$ are compound (as this does not affect the right hand side of (\ref{eq.chain_6}) up to another $O(C^m)$ factor). For each leaf $\lf$, there is a unique $N$ such that $\lf\in\Lc_N$, and a unique $X\in 2^\Nb$ such that $m_\lf\in[X,2X)$. For each $(N,X)$, consider all the leaves $\lf$ corresponding to this $(N,X)$ as above, and let $Y\in 2^\Nb$ be such that the number of these leaves lies in $[Y,2Y)$. This defines a mapping $(N,X)\mapsto Y$, and hence the following mapping chain
\begin{equation}\label{eq.chain_9}
\lf\xrightarrow{\sigma_1}(N,X)\xrightarrow{\sigma_2}(N,Y)\xrightarrow{\sigma_3}P,
\end{equation} where $\sigma_1$ and $\sigma_2$ are as above, and $\sigma_3(N,Y)=P:=YN^4$.

Define $\Lc_{N,X}$, $\Lc_{N,Y}$ and $\Lc_P$ to be the set of leaves $\lf$ corresponding to a given $(N,X)$, or given $(N,Y)$, or given $P$, and define $m_{N,X}$ etc. to be the total multiplicity of these leaves. For each $(N,Y)$, consider all different $X$ such that $\sigma_2(N,X)=(N,Y)$; let the maximal of them be $X^*$, which is a function of $(N,Y)$. The following relations are easily verified by the above definitions:
\begin{equation}\label{eq.chain_10}
m_{N,X}\sim XY,\ m_{N,Y}\sim X_*Y,\quad m_{N,Y}=\sum_{\sigma_2(N,X)=(N,Y)}m_{N,X},\quad m_{P}=\sum_{YN^4=P}m_{N,Y},\quad m=\sum_P m_P.
\end{equation}

\textbf{Step 3.} For each permutation $\pi$ and each $j$, let $\lf_j\in\Lc$ be the leaf of which $\pi(j)$ is a copy, and define $(N_j,X_j)$, $(N_j,Y_j)$ and $P_j$ be the results of applying successively $\sigma_1$, $\sigma_2$ and $\sigma_3$ to $\lf_j$. Denote the sequence $(N_j,X_j):=\boldsymbol{X}$, and $(N_j,Y_j):=\vY$ and $(P_j):=\vP$. Define also $\Lc_{N,X}^S=\Lc_{N,X}\cap \Lc^S$ and similarly for $\Lc_{N,X}^C$ and $\Lc_P^S$, $\Lc_P^C$ etc.

To estimate the summation in (\ref{eq.chain_6}), we will first fix $\vX=(N_j,X_j)$ and sum over all $\pi$ with fixed $\vX$, and then sum over $\vX$. This leads to the decomposition
\[\sum_\pi(\cdots)=\sum_{\vX}\sum_{\pi:\vX\,\textrm{fixed}}(\cdots).\] In this step we deal with the outer summation (in $\vX$), and prove that
\begin{equation}\label{eq.chain_11}\sum_{\vX}\prod_{j=2}^{m-1} \min\bigg(1,\bigg(\frac{P_{j+1}}{P_j}\bigg)^{\theta}\bigg)\leq C^m \prod_{\lf\in\Lc^C}(m_\lf!)^{1/4}\prod_{\lf\in\Lc^S}N_{\lf^+}^{2(m_\lf-2)}
\end{equation}
for $\theta:=1/20$; the same is true if we skip at most $99$ values of $j$ in (\ref{eq.chain_11}). To prove (\ref{eq.chain_11}), note that all the elements $P_j$ belong to some fixed set of at most $n$ distinct dyadic numbers, so by (\ref{eq.aux6}) in Lemma \ref{lem.aux2} we have that
\begin{equation}\label{eq.chain_12}\sum_{\vP}\prod_{j=2}^{m-1} \min\bigg(1,\bigg(\frac{P_{j+1}}{P_j}\bigg)^{\theta}\bigg)\leq C^n
\end{equation} (same if we skip at most $99$ values of $j$), thus it suffices to prove that
\begin{equation}\label{eq.chain_13}\sum_{\vX:\vP\,\textrm{fixed}}1\leq C^m \prod_{\lf\in\Lc^C}(m_\lf!)^{1/4}\prod_{\lf\in\Lc^S}N_{\lf^+}^{2(m_\lf-2)}.
\end{equation} Since for $\vP=(P_j)$ fixed, we only need to permute those $(N,X)$ corresponding to the same $P$, we see that
\begin{equation}\textrm{LHS of (\ref{eq.chain_13})}=\prod_{P}\binom{m_P}{(m_{N,X})_{N,X}},
\end{equation} where $(N,X)$ runs over those satisfying $\sigma_3(\sigma_2(N,X))=P$. It then suffice to show that
\begin{equation}\binom{m_P}{(m_{N,X})_{N,X}}\leq C^{m_P}\prod_{\lf\in\Lc_P^C}(m_\lf!)^{1/4}\prod_{\lf\in\Lc_P^S}N_{\lf^+}^{2(m_\lf-2)}.
\end{equation} Using the decomposition (where $YN^4=P$ and $\sigma_2(N,X)=(N,Y)$)
\[\binom{m_P}{(m_{N,X})_{N,X}}=\binom{m_P}{(m_{N,Y})_{N,Y}}\cdot\prod_{(N,Y)}\binom{m_{N,Y}}{(m_{N,X})_{N,X}}\] and using (\ref{eq.aux3}) in Lemma \ref{lem.aux1} to control $\binom{m_{N,Y}}{(m_{N,X})_{N,X}}$ (as $m_{N,X}\sim XY$ are lacunary for a fixed $Y$ and different choices of $X$, see (\ref{eq.chain_10})), it then suffices to prove
\begin{equation}\label{eq.chain_14}\binom{m_P}{(m_{N,Y})_{N,Y}}\leq C^{m_P}\prod_{\lf\in\Lc_P^C}(m_\lf!)^{1/4}\prod_{\lf\in\Lc_P^S}N_{\lf^+}^{2(m_\lf-2)}.
\end{equation}

Let $P$ be fixed, and all different choices of $(N,Y)$ with this fixed $P$ be $(N_{(j)},Y_{(j)})$ where $Y_{(j)}N_{(j)}^4=P$, and let the $X^*$ corresponding to $(N_{(j)},Y_{(j)})$ be $X_{(j)}$, then (\ref{eq.chain_10}) implies that $m_{(j)}:=m_{N_{(j)},Y_{(j)}}\sim X_{(j)}Y_{(j)}$. For each $j$, consider all the leaves in $\Lc_{N_{(j)},X_{(j)}}$; either at least half of them belong to $\Lc^S$, or at least half of them belong to $\Lc^C$. In these cases we write $j\in S$ and $j\in C$ respectively. We then have
\begin{equation}\label{eq.chain_15}\textrm{LHS of (\ref{eq.chain_14})}=\frac{(\sum_j m_{(j)})!}{\prod_j m_{(j)}!}\leq 2^{m_P}\cdot \frac{(\sum_{j\in S}m_{(j)})!}{\prod_{j\in S}m_{(j)}!}\cdot\frac{(\sum_{j\in C}m_{(j)})!}{\prod_{j\in C}m_{(j)}!}.
\end{equation} It then suffices to estimate the last two factors in (\ref{eq.chain_15}); in fact, we shall bound the factor with $j\in S$ by the product $\prod_{\lf\in\Lc_P^S}N_{\lf^+}^{2(m_\lf-2)}$ in (\ref{eq.chain_14}), and the factor $j\in C$ by the product $\prod_{\lf\in\Lc_P^C}(m_\lf!)^{1/4}$.

\textbf{The $j\in S$ part:} Since $Y_{(j)}N_{(j)}^4=P$ are all equal, we know that $N_{(j)}\neq N_{(j')}$ for $j\neq j'$. We may rearrange them in increasing order, then $N_{(j)}\geq 2^j$. For each $j\in S$, at least half (so $\geq Y_{(j)}/2$) leaves in $\Lc_{N_{(j)},X_{(j)}}\subseteq \Lc_{N_{(j)},Y_{(j)}}$ are simple; for such leaves $\lf$ we have $m_\lf>2$ and $2(m_\lf-2)\geq m_\lf/2\geq X_{(j)}/2$, so
\begin{equation}\label{eq.chain_16}\prod_{\lf\in \Lc_{N_{(j)},Y_{(j)}}^S}N_{\lf^+}^{2(m_\lf-2)}\geq N_{(j)}^{X_{(j)}Y_{(j)}/4}\geq 2^{jX_{(j)}Y_{(j)}/4}\quad\Rightarrow\quad\prod_{\lf\in\Lc_P^S}N_{\lf^+}^{2(m_\lf-2)}\geq 2^{\sum_{j\in S}(jX_{(j)}Y_{(j)}/4)}.\end{equation}  As such, by (\ref{eq.aux2}) in Lemma \ref{lem.aux1} we have
\begin{equation}\frac{(\sum_{j\in S}m_{(j)})!}{\prod_{j\in S}m_{(j)}!}\leq \beta^{m_P}\cdot\alpha^{\sum_{j\in S}jm_{(j)}}.
\end{equation} By choosing $\alpha$ sufficiently close to $1$, $\beta=\beta(\alpha)\leq C$, and using $m_{(j)}\sim X_{(j)}Y_{(j)}$ and (\ref{eq.chain_15}), we get the desired bound for the factor with $j\in S$.

\textbf{The $j\in C$ part:} For each $j\in C$, at least half (so $\geq Y_{(j)}/2$) leaves in $\Lc_{N_{(j)},X_{(j)}}$ are compound, so
\begin{equation}\label{eq.chain_17}\prod_{\lf\in \Lc_{N_{(j)},Y_{(j)}}^C}(m_\lf!)^{1/4}\geq C^{-m_{N_{(j)},Y_{(j)}}} X_{(j)}^{X_{(j)}Y_{(j)}/8}\quad\Rightarrow\quad\prod_{\lf\in\Lc_P^C}(m_\lf!)^{1/4}\geq C^{-m_P} \prod_{j\in C}X_{(j)}^{X_{(j)}Y_{(j)}/8}.\end{equation} It then suffices to prove (where we omit the $j\in C$ restriction)
\begin{equation}\label{eq.chain_18}\frac{(\sum_{j}m_{(j)})!}{\prod_{j}m_{(j)}!}\leq C^{\sum_{j}m_{(j)}}\prod_{j} (X_{(j)}^{X_{(j)}})^{Y_{(j)}/8}.
\end{equation} To prove (\ref{eq.chain_18}), we order all $X_{(j)}$ from small to large, let the different values be $Z_{(i)}\geq 2^i$; for each $i$, let \[B_{(i)}:=\{j:X_{(j)}=Z_{(i)}\},\qquad q_{(i)}:=\sum_{j\in B_{(i)}}m_{(j)}.\] Note also that $Y_{(j)}\neq Y_{(j')}$ for $j\neq j'$. Since $m_{(j)}\sim X_{(j)}Y_{(j)}$ and all the $X_{(j)}Y_{(j)}\,(j\in B_{(i)})$ are lacunary for each fixed $i$, we see that
\[\binom{\sum_j m_{(j)}}{(m_{(j)})}=\binom{\sum_i q_{(i)}}{(q_{(i)})}\prod_i\binom{q_{(i)}}{(m_{(j)})_{j\in B_{(i)}}}\leq C^{\sum_jm_{(j)}}\cdot \binom{\sum_i q_{(i)}}{(q_{(i)})}\leq C^{\sum_jm_{(j)}}\cdot\alpha^{\sum_i iq_{(i)}}\] for $\alpha$ sufficiently close to $1$ and $C=O(1)$. Now note that $q_{(i)}\sim Z_{(i)}\sum_{j\in B_{(i)}}Y_{(j)}:=Z_{(i)}W_{(i)}$, we have
\[\exp\bigg(\sum_iiq_{(i)}\bigg)\leq \exp\bigg(100\sum_iiZ_{(i)}W_{(i)}\bigg)\leq\exp\bigg(100\sum_i\log Z_{(i)}\cdot Z_{(i)}W_{(i)}\bigg)= \bigg(\prod_{i}Z_{(i)}^{Z_{(i)}W_{(i)}}\bigg)^{100}=\prod_j (X_{(j)}^{X_{(j)}})^{100Y_{(j)}},\] so (\ref{eq.chain_18}) follows by choosing $\alpha$ close enough to $1$. This finishes the proof of (\ref{eq.chain_11}).

\textbf{Step 4.} For simplicity, we will assume $N_1=N_{\lf_0^+}$ is the smallest scale. In general cases we first locate this smallest scale (say $N_j=N_{\lf_0^+}$) then slightly modify the proof by going left and right separately starting from $j$. In this step we will prove that
\begin{multline}\label{eq.chain_19}
\sum_{\pi:\vX\,\textrm{fixed}}^{(*)}\prod_{\substack{j=1\\j\not\in O}}^{m-1}\frac{1}{\langle y_{\pi(j)}-y_{\pi(j+1)}\rangle^2}\leq C^m\prod_{j=1}^{m-1} X_{j+1}Y_{j+1}^{1/2}N_{j+1}^{-2}\cdot\prod_{j\in O}(X_{j+1}Y_{j+1})^{-1/2}\\\times \prod_{j\,\textrm{even}}^{(\dagger)}\min\big(1,(P_{j+1}/P_j)^{1/8}\big)\cdot R^{2s}\cdot (N_\rf/R)^{\min(2s,3)},
\end{multline} where the condition $(\dagger)$ means we skip at most $20$ values among all even indices $j$.

Once we prove (\ref{eq.chain_19}), the same proof yields the same bound but for $j$ odd instead of $j$ even, then interpolation gives the same bound for all $j$ with at most $40$ exceptions, and with $1/8$ replaced by $1/16$; combining with (\ref{eq.chain_11}) we then get
\begin{multline}\label{eq.chain_20}
\textrm{LHS of (\ref{eq.chain_6})}\leq \sup_{\vX}\bigg(C^m\prod_{N,X}(XY^{1/2})^{m_{NX}}\prod_{N,X}(XY)^{-s_{NX}/2}\prod_{\lf\in\Lc}N_{\lf^+}^{-2m_\lf}\cdot N_{\lf_0^+}^2\\
\times\prod_{\lf\in\Lc^C}(m_\lf!)^{1/4}\prod_{\lf\in\Lc^S} N_{\lf^+}^{2(m_\lf-2)}\cdot R^{2s}\cdot\lambda\bigg),
\end{multline} where $s_{NX}$ is the number of $j\in O$ such that $(N_{j+1},X_{j+1})=(N,X)$. Note that $m_{NX}\sim XY$ and each $m_\lf\sim X$ for $\lf\in \Lc_{N,X}$, we conclude that (using also $A^B\leq e^A\cdot B!$ with $A=m_{NX}$ and $B=m_{NX}-s_{NX}$)
\[(XY^{1/2})^{m_{NX}}\cdot(XY)^{-s_{NX}/2}\leq C^{m_{NX}}((m_{NX}-s_{NX})!)^{1/2}\prod_{\lf\in\Lc_{N,X}}(m_{\lf}!)^{1/2}.\] By multiplying in $(N,X)$ and using that $\sum_{N,X}m_{NX}=m$ and $\sum_{N,X}s_{NX}=s$, we get that
\begin{multline}\label{eq.chain_20.5}
\textrm{LHS of (\ref{eq.chain_6})}\leq C^{m}((m-s)!)^{1/2}\prod_{\lf\in\Lc^S}(m_\lf!)^{1/2}\prod_{\lf\in\Lc^C}(m_\lf!)^{3/4}\\
\times\prod_{\lf\in\Lc^S}N_{\lf^+}^{-4}\prod_{\lf\in\Lc^C}N_{\lf^+}^{-2m_\lf}\cdot N_{\lf_0^+}^2 \cdot R^{2s}\cdot (N_\rf/R)^{\min(2s,3)},
\end{multline} which proves (\ref{eq.chain_6}).

Now it remains to prove (\ref{eq.chain_19}). In summing over $\pi$ we sum over each $\pi(j)$ in decreasing order in $j$ (i.e. first $\pi(n)$, then $\pi(n-1)$, then $\pi(n-2)$ etc.), and also combine the summation in $\pi(j)$ and $\pi(j+1)$ together for each even $j$. By assumption, $\pi(j)$ and $\pi(j+1)$ cannot be two copies of the same leaf unless $j\in O$. We shall prove two local inequalities, the first being rough and second more precise:
\begin{equation}\label{eq.chain_20.9}
\sum_{\pi(j),\pi(j+1)}\Lambda_j\cdot\Lambda_{j+1}\lesssim X_jY_{j}^{1/2}N_j^{-2}\cdot X_{j+1}Y_{j+1}^{1/2}N_{j+1}^{-2}\cdot Y_j^{1/2}Y_{j+1}^{1/2}\cdot (N_j/R)^{2\cdot\mathbf{1}_{j-1\in O}}\cdot (N_{j+1}/R)^{2\cdot\mathbf{1}_{j\in O}},
\end{equation}
\begin{equation}\label{eq.chain_21}\sum_{\pi(j),\pi(j+1)}\Lambda_j\cdot\Lambda_{j+1}\lesssim X_jY_{j}^{1/2}N_j^{-2}\cdot X_{j+1}Y_{j+1}^{1/2}N_{j+1}^{-2}\cdot \Xi_j\Xi_{j+1}\cdot \min\big(1,(P_{j+1}/P_j)^{1/8}\big),
\end{equation} where
\[\Lambda_j=\left\{
\begin{aligned}\mathbf{1}_{|y_{\pi(j-1)}-y_{\pi(j)}|\gtrsim N_j}\cdot |y_{\pi(j-1)}-y_{\pi(j)}|^{-2},&&j-1\not\in O;\\
R^{-2},&&j-1\in O,
\end{aligned}\qquad \textrm{with $\pi(j-1)$ fixed,}
\right.\] same for $\Lambda_{j+1}$ (but with stronger restriction $|y_{\pi(j)}-y_{\pi(j+1)}|\gtrsim \max(N_j,N_{j+1})$ when $j\not\in O$), and \[\Xi_j=(X_jY_j)^{(-1/2)\cdot\mathbf{1}_{j-1\in O}},\qquad\textrm{same for } \Xi_{j+1}.\] Note that, if a summation that cannot be combined (due to parity), we will estimate this summation by the one-variable version of (\ref{eq.chain_20.9}), namely
\begin{equation}\label{eq.chain_21.1}
\sum_{\pi(j)}\Lambda_j\lesssim X_jY_{j}^{1/2}N_j^{-2}\cdot Y_j^{1/2}\cdot (N_j/R)^{2\cdot\mathbf{1}_{j-1\in O}}.
\end{equation}

Once we prove (\ref{eq.chain_20.9})--(\ref{eq.chain_21.1}), we put together these one- and two-variable sum estimates in the order described above, to recover the right hand side of (\ref{eq.chain_19}). More precisely, for the (at most $4$) one-variable summations we apply (\ref{eq.chain_21.1}). For the two-variable summation in $(\pi(j),\pi(j+1))$ where $j-1\not\in O$ and $j\not\in  O$, we apply (\ref{eq.chain_21}); for those where $j-1\in O$ or $j\in O$, we choose 3 of them (or all of them if there are less than 3) and apply (\ref{eq.chain_20.9}) for these chosen 3, and apply (\ref{eq.chain_21}) for the rest. In this way, the $N_j/R$ factors on the right hand sides of (\ref{eq.chain_20.9}) and (\ref{eq.chain_21.1}) provide the needed $(N_\rf/R)^{\min(2s,3)}$ factor, while the applications of (\ref{eq.chain_21}) provide all the other factors on the right hand side of (\ref{eq.chain_19}) (with $R^{2s}$ coming from the $\Lambda_j=R^{-2}$ factor for $j-1\in O$). Note that the (at most 7) applications of (\ref{eq.chain_20.9}) and (\ref{eq.chain_21.1}) may cause us to skip at most $20$ factors $(\min(1,P_{j+1}/P_j))^{1/8}$, but this is allowed by (\ref{eq.chain_19}); for the same reason we may also lose various $Y_j$ and $\Xi_j^{-1}$ factors on the right hand side of (\ref{eq.chain_19}), but again there are at most $20$ of them and each is bounded by $m$, which leads to $m^{20}\leq C^m$ loss. This proves (\ref{eq.chain_19}) assuming (\ref{eq.chain_20.9})--(\ref{eq.chain_21.1}).

Now we prove (\ref{eq.chain_20.9})--(\ref{eq.chain_21.1}). Note that (\ref{eq.chain_20.9}) is trivial because the number of choices for $\pi(j)$ and $\pi(j+1)$ are bounded by $X_jY_j$ and $X_{j+1}Y_{j+1}$ respectively, so (\ref{eq.chain_20.9}) easily follows from the lower bounds $|y_{\pi(j-1)}-y_{\pi(j)}|\gtrsim N_j$ and $|y_{\pi(j)}-y_{\pi(j+1)}|\gtrsim N_{j+1}$. The same holds for (\ref{eq.chain_21.1}).

It remains to prove (\ref{eq.chain_21}). Note that each $\pi(j)$ is a copy of a leaf $\lf$ in $\Lc_{N_j,X_j}$ so that $y_{\pi(j)}=z_\lf$; each leaf has $m_\lf\sim X_j$ copies, and the $z_\lf$ form a set of cardinality $\sim Y_j$ with any two points separated by distance $\gtrsim N_j$ thanks to (\ref{eq.chain_8}). Therefore, the case $j-1\not\in O$ and $j\not\in O$ follows directly from (\ref{eq.aux8}) in Lemma \ref{lem.aux3}.

Next, suppose $j-1\in O$ and $j\in O$. Note that $\pi(j)$ and $\pi(j+1)$ each has $\sim X_jY_j$ and $\sim X_{j+1}Y_{j+1}$ choices, it suffices to show that
\[X_jY_j\cdot X_{j+1}Y_{j+1}\cdot R^{-4}\lesssim X_jY_{j}^{1/2}N_j^{-2}\cdot X_{j+1}Y_{j+1}^{1/2}N_{j+1}^{-2}\cdot (X_jY_jX_{j+1}Y_{j+1})^{-1/2}\cdot \min\big(1,(P_{j+1}/P_j)^{1/8}\big),\] which is just
\begin{equation}\label{eq.chain_22}\bigg(\frac{R}{N_j}\bigg)^2\bigg(\frac{R}{N_{j+1}}\bigg)^2\geq (X_j^{1/2}Y_j)(X_{j+1}^{1/2}Y_{j+1})\max\big(1,(P_j/P_{j+1})^{1/8}\big).
\end{equation} Now note that 
\[R\geq N_\rf'=\sum_{\nf}\gamma_\nf^\infty \cdot N_\nf\geq\sum_{\lf\in\Lc}m_\lf\cdot N_{\lf^+}\geq \sum_{\lf\in \Lc_{N_j,X_j}}m_\lf\cdot N_{\lf^+}\gtrsim N_jX_jY_j,\] so $R/N_j\gtrsim X_jY_j$ (same for $j+1$),
\begin{equation}\label{eq.chain_22.5}\frac{P_j}{P_{j+1}}=\frac{Y_jN_j^4}{Y_{j+1}N_{j+1}^4}\leq Y_j\bigg(\frac{R}{N_{j+1}}\bigg)^4\leq \bigg(\frac{R}{N_{j}}\bigg)\bigg(\frac{R}{N_{j+1}}\bigg)^4,\end{equation} from which (\ref{eq.chain_22}) easily follows.

Next assume of $j-1\not\in O$ and $j\in O$. In this case we first sum over $\pi(j+1)$ trivially and then bound the sum in $\pi(j)$ using (\ref{eq.lem4_2}), to get
\begin{equation}\label{eq.chain_23}\textrm{LHS of (\ref{eq.chain_21})}\lesssim X_jY_j^{1/2}N_j^{-2}\cdot X_{j+1}Y_{j+1}R^{-2},\end{equation} so after some algebraic reductions it suffices to show that
\begin{equation}\label{eq.chain_24}\bigg(\frac{R}{N_{j+1}}\bigg)^2\gtrsim X_{j+1}^{1/2}Y_{j+1}\cdot\max\big(1,(P_j/P_{j+1})^{1/8}\big).
\end{equation} If $N_{j+1}\leq N_j$, then (\ref{eq.chain_24}) easily follows from $R/N_{j+1}\gtrsim X_{j+1}Y_{j+1}$ and (\ref{eq.chain_22.5}); if $N_{j+1}\geq N_j$, then note that
\begin{equation}\label{eq.chain_24.5}\frac{P_j}{P_{j+1}}=\frac{Y_j}{Y_{j+1}}\cdot\frac{N_j^4}{N_{j+1}^4}\leq Y_j\cdot\frac{N_j}{N_{j+1}}\leq\frac{R}{N_{j+1}},\end{equation} so (\ref{eq.chain_24}) still follows easily.

Finally, assume $j-1\in O$ and $j\not\in O$. In this case, we can first sum over either $\pi(j)$ or $\pi(j+1)$ and then bound this sum using (\ref{eq.lem4_2}), and then sum over the other variable trivially. Here if $N_j\geq N_{j+1}$ we choose to sum over $\pi(j)$ first; otherwise we choose to sum over $\pi(j+1)$ first. In this way, it suffices to show that
\begin{equation}\label{eq.chain_25}\bigg(\frac{R}{\min(N_j,N_{j+1})}\bigg)^2\gtrsim (X_jY_j)^{1/2}\cdot\max(Y_j^{1/2},Y_{j+1}^{1/2})\cdot\max\big(1,(P_j/P_{j+1})^{1/8}\big).
\end{equation} But this easily follows from $R/N_{j+1}\gtrsim X_{j+1}Y_{j+1}$, and $R/N_j\gtrsim X_jY_j$, and (\ref{eq.chain_22.5}).

This proves (\ref{eq.chain_21}) and thus (\ref{eq.chain_19}). Using also \textbf{Step 3}, this then proves (\ref{eq.chain_6}), which finishes the proof of Proposition \ref{prop.main_4} in view of \textbf{Steps 1--2}.

\bibliographystyle{alpha}
\bibliography{refs}

\begin{thebibliography}{OSSW24}

\bibitem[BCCH20]{BCCH}
Yvain Bruned, Ajay Chandra, Ilya Chevyrev, and Martin Hairer.
\newblock Renormalising {SPDEs} in regularity structures.
\newblock {\em Journal of the European Mathematical Society}, 23(3):869--947,
  2020.

\bibitem[BGHZ22]{HairerGeometric}
Y.~Bruned, F.~Gabriel, M.~Hairer, and L.~Zambotti.
\newblock Geometric stochastic heat equations.
\newblock {\em J. Amer. Math. Soc.}, 35(1):1--80, 2022.

\bibitem[CC18]{CatellierChouk}
R\'emi Catellier and Khalil Chouk.
\newblock Paracontrolled distributions and the 3-dimensional stochastic
  quantization equation.
\newblock {\em Ann. Probab.}, 46(5):2621--2679, 2018.

\bibitem[CCHS22]{CCHS_2D}
Ajay Chandra, Ilya Chevyrev, Martin Hairer, and Hao Shen.
\newblock Langevin dynamic for the 2{D} {Y}ang-{M}ills measure.
\newblock {\em Publ. Math. Inst. Hautes \'Etudes Sci.}, 136:1--147, 2022.

\bibitem[CCHS24]{CCHS_3D}
Ajay Chandra, Ilya Chevyrev, Martin Hairer, and Hao Shen.
\newblock Stochastic quantisation of {Y}ang-{M}ills-{H}iggs in 3{D}.
\newblock {\em Invent. Math.}, 237(2):541--696, 2024.

\bibitem[CD20]{CD20}
S.~Chatterjee and A.~Dunlap.
\newblock Constructing a solution of the {$(2+1)$}-dimensional {KPZ} equation.
\newblock {\em Ann. Probab.}, 48(2):1014--1055, 2020.

\bibitem[CES21]{CES21}
G.~Cannizzaro, D.~Erhard, and P.~Sch\"onbauer.
\newblock 2{D} anisotropic {KPZ} at stationarity: scaling, tightness and
  nontriviality.
\newblock {\em Ann. Probab.}, 49(1):122--156, 2021.

\bibitem[CET23]{CannizzaroDuke}
Giuseppe Cannizzaro, Dirk Erhard, and Fabio Toninelli.
\newblock Weak coupling limit of the anisotropic {KPZ} equation.
\newblock {\em Duke Mathematical Journal}, 172(16):3013--3104, 2023.

\bibitem[CGT24]{CannizzaroBurgers}
Giuseppe Cannizzaro, Massimiliano Gubinelli, and Fabio Toninelli.
\newblock Gaussian fluctuations for the stochastic {B}urgers equation in
  dimension {$d \geq 2$}.
\newblock {\em Comm. Math. Phys.}, 405(4):Paper No. 89, 2024.

\bibitem[CH16]{ChandraHairer}
Ajay Chandra and Martin Hairer.
\newblock An analytic {BPHZ} theorem for regularity structures.
\newblock {\em arXiv preprint arXiv:1612.08138}, 2016.

\bibitem[CMT24]{CMTsuperdiffusive}
Giuseppe Cannizzaro, Quentin Moulard, and Fabio Toninelli.
\newblock Superdiffusive central limit theorem for the stochastic burgers
  equation at the critical dimension.
\newblock {\em arXiv preprint arXiv:2501.00344}, 2024.

\bibitem[CSZ17]{CSZ17b}
F.~Caravenna, R.~Sun, and N.~Zygouras.
\newblock Universality in marginally relevant disordered systems.
\newblock {\em Ann. Appl. Probab.}, 27(5):3050--3112, 2017.

\bibitem[CSZ19]{CSZ19b}
F.~Caravenna, R.~Sun, and N.~Zygouras.
\newblock On the moments of the {$(2+1)$}-dimensional directed polymer and
  stochastic heat equation in the critical window.
\newblock {\em Comm. Math. Phys.}, 372(2):385--440, 2019.

\bibitem[CSZ20]{CSZ20}
F.~Caravenna, R.~Sun, and N.~Zygouras.
\newblock The two-dimensional {KPZ} equation in the entire subcritical regime.
\newblock {\em Ann. Probab.}, 48(3):1086--1127, 2020.

\bibitem[CSZ23a]{CSZ23a}
F.~Caravenna, R.~Sun, and N.~Zygouras.
\newblock The critical 2d stochastic heat flow.
\newblock {\em Invent. Math.}, 233(1):325--460, 2023.

\bibitem[CSZ23b]{CSZ23b}
F.~Caravenna, R.~Sun, and N.~Zygouras.
\newblock The critical {$2d$} stochastic heat flow is not a {G}aussian
  multiplicative chaos.
\newblock {\em Ann. Probab.}, 51(6):2265--2300, 2023.

\bibitem[CSZ24]{CSZ2024review}
Francesco Caravenna, Rongfeng Sun, and Nikos Zygouras.
\newblock The critical 2d stochastic heat flow and related models.
\newblock {\em arXiv preprint arXiv:2412.10311}, 2024.

\bibitem[CSZ25]{CSZ2025ICM}
Francesco Caravenna, Rongfeng Sun, and Nikos Zygouras.
\newblock From disordered systems to the critical {2D} stochastic heat flow.
\newblock {\em arXiv preprint arXiv:2511.08479}, 2025.

\bibitem[CT24]{CT24}
G.~Cannizzaro and F.~Toninelli.
\newblock Lecture notes on stationary critical and super-critical {SPDE}s,
  2024.
\newblock arXiv:2403.15006.

\bibitem[DG24]{DG24}
A.~Dunlap and C.~Graham.
\newblock Edwards-{W}ilkinson fluctuations in subcritical 2d stochastic heat
  equations, 2024.
\newblock arXiv:2405.09520.

\bibitem[DH23a]{DH21}
Yu~Deng and Zaher Hani.
\newblock Full derivation of the wave kinetic equation.
\newblock {\em Inventiones mathematicae}, 233(2):543--724, 2023.

\bibitem[DH23b]{DH23_2}
Yu~Deng and Zaher Hani.
\newblock Long time justification of wave turbulence theory.
\newblock arXiv:2311.10082, 2023.

\bibitem[DH26a]{DH23}
Yu~Deng and Zaher Hani.
\newblock Derivation of the wave kinetic equation: full range of scaling laws.
\newblock {\em Memoirs of the American Mathematical Society}, 320(1631), 2026.

\bibitem[DH26b]{DH21_2}
Yu~Deng and Zaher Hani.
\newblock Propagation of chaos and the higher order statistics in the wave
  kinetic theory.
\newblock {\em Journal of the European Mathematical Society}, 28(2):673--733,
  2026.

\bibitem[DHM24]{DHM24}
Yu~Deng, Zaher Hani, and Xiaoxuan Ma.
\newblock Long time derivation of the {B}oltzmann equation from hard sphere
  dyamics.
\newblock {\em Annals of Mathematics}, 2024.
\newblock to appear.

\bibitem[DS22]{DubedatShen}
Julien Dub{\'e}dat and Hao Shen.
\newblock Stochastic {R}icci flow on compact surfaces.
\newblock {\em International Mathematics Research Notices},
  2022(16):12253--12301, 2022.

\bibitem[Duc22]{D22}
P.~Duch.
\newblock Flow equation approach to singular stochastic {PDEs}, 2022.
\newblock arXiv:2109.11380.

\bibitem[FG19]{GubiQuasi}
Marco Furlan and Massimiliano Gubinelli.
\newblock Paracontrolled quasilinear {SPDE}s.
\newblock {\em The Annals of Probability}, 47(2):1096--1135, 2019.

\bibitem[GH19]{HairerQuasi}
M{\'a}t{\'e} Gerencs{\'e}r and Martin Hairer.
\newblock A solution theory for quasilinear singular {SPDEs}.
\newblock {\em Communications on Pure and Applied Mathematics},
  72(9):1983--2005, 2019.

\bibitem[GIP15]{GIP15}
M.~Gubinelli, P.~Imkeller, and N.~Perkowski.
\newblock Paracontrolled distributions and singular {PDE}s.
\newblock {\em Forum Math. Pi}, 3:e6, 75, 2015.

\bibitem[GN25]{GN25}
S.~Ganguly and K.~Nam.
\newblock Sharp moment and upper tail asymptotics for the critical $2d$
  {S}tochastic {H}eat {F}low, 2025.
\newblock arXiv:2507.22029.

\bibitem[GQT21]{GQT21}
Y.~Gu, J.~Quastel, and {L}.-{C}. Tsai.
\newblock Moments of the 2{D} {SHE} at criticality.
\newblock {\em Probab. Math. Phys.}, 2(1):179--219, 2021.

\bibitem[GR26]{GR26}
Simon Gabriel and Tommaso Rosati.
\newblock Fluctuations in the weakly coupled {4D Anderson Hamiltonian}.
\newblock {\em arXiv preprint arXiv:2602.22509}, 2026.

\bibitem[GRZ25]{GabrielAC}
Simon Gabriel, Tommaso Rosati, and Nikos Zygouras.
\newblock The {Allen--Cahn} equation with weakly critical random initial datum.
\newblock {\em Probability Theory and Related Fields}, 192(3):1373--1446, 2025.

\bibitem[Gu20]{G20}
Y.~Gu.
\newblock Gaussian fluctuations from the 2{D} {KPZ} equation.
\newblock {\em Stoch. Partial Differ. Equ. Anal. Comput.}, 8(1):150--185, 2020.

\bibitem[Hai13]{HairerKPZ}
Martin Hairer.
\newblock Solving the {KPZ} equation.
\newblock {\em Ann. of Math. (2)}, 178(2):559--664, 2013.

\bibitem[Hai14]{H14}
M.~Hairer.
\newblock A theory of regularity structures.
\newblock {\em Invent. Math.}, 198(2):269--504, 2014.

\bibitem[Hai16]{HairerBPHZ}
Martin Hairer.
\newblock An analyst’s take on the {BPHZ} theorem.
\newblock In {\em The Abel Symposium}, pages 429--476. Springer, 2016.

\bibitem[HQ18]{HairerQuastel}
Martin Hairer and Jeremy Quastel.
\newblock A class of growth models rescaling to {KPZ}.
\newblock {\em Forum of Mathematics, Pi}, 6:e3, 2018.

\bibitem[Kup16]{Kup16}
A.~Kupiainen.
\newblock Renormalization group and stochastic {PDE}s.
\newblock {\em Ann. Henri Poincar\'e}, 17(3):497--535, 2016.

\bibitem[OSSW24]{Otto2024priori}
Felix Otto, Jonas Sauer, Scott~A Smith, and Hendrik Weber.
\newblock A priori bounds for quasi-linear {SPDEs} in the full subcritical
  regime.
\newblock {\em Journal of the European Mathematical Society}, 27(1):71--118,
  2024.

\bibitem[Pia25]{Piazza2025}
Jack Piazza.
\newblock Stochastic conformal flows in even dimensions.
\newblock {\em arXiv preprint arXiv:2506.01217}, 2025.

\bibitem[Tao24]{Tao24}
R.~Tao.
\newblock Gaussian fluctuations of a nonlinear stochastic heat equation in
  dimension two.
\newblock {\em Stoch. Partial Differ. Equ. Anal. Comput.}, 12(1):220--246,
  2024.

\bibitem[Tsa24]{T24}
L.-{C}. Tsai.
\newblock Stochastic heat flow by moments, 2024.
\newblock arXiv:2410.14657.

\end{thebibliography}


\begin{thebibliography}{99}
\bibitem{DH21} Y. Deng and Z. Hani. Full derivation of the wave kinetic equation. \emph{Invent. Math.} 233 (2023), no. 2, 543--724.
\bibitem{DH21_2} Y. Deng and Z. Hani. Propagation of chaos and the higher order statistics in the wave kinetic theory. \emph{J. Eur. Math. Soc.} 28 (2026), no. 2, 673--733.
\bibitem{DH23} Y. Deng and Z. Hani. Derivation of the wave kinetic equation: full range of scaling laws. \emph{Mem. Amer. Math. Soc.} 320 (2026), no. 1631.
\bibitem{DH23_2} Y. Deng and Z. Hani. Long time justification of wave turbulence theory. \emph{arXiv} 2311.10082.
\bibitem{DHM24} Y. Deng, Z. Hani and X. Ma. Long time derivation of the Boltzmann equation from hard sphere dyamics. \emph{Ann. of Math.}, to appear.
\bibitem{CSZ17} Francesco Caravenna, Rongfeng Sun and Nikos Zygouras. Universality in marginally relevant disordered systems. \emph{Ann. Appl. Probab.} 27 (2017), no. 5, 3050--3112.
\bibitem{CSZreview2024} Francesco Caravenna, Rongfeng Sun and Nikos Zygouras. The Critical 2d Stochastic Heat Flow and Related Models. \emph{arXiv} 2412.10311.
\bibitem{GR26} Simon Gabriel and Tommaso Rosati. Fluctuations in the weakly coupled 4D Anderson Hamiltonian. \emph{arXiv} 2602.22509.
\end{thebibliography}

\end{document}